\DeclareSymbolFontAlphabet{\amsmathbb}{AMSb}%
\newcommand{\IP}{\amsmathbb{P}}
\newcommand{\R}{\amsmathbb{R}}
\newcommand{\N}{\amsmathbb{N}}
\newcommand{\cA}{\mathcal{A}}
\newcommand{\cC}{\mathcal{C}}
\newcommand{\cD}{\mathcal{D}} 
\newcommand{\cE}{\mathcal{E}}
\newcommand{\cF}{\mathcal{F}}
\newcommand{\cI}{\mathcal{I}}
\newcommand{\cJ}{\mathcal{J}}
\newcommand{\cK}{\mathcal{K}}
\newcommand{\cL}{\mathcal{L}}
\newcommand{\cM}{\mathcal{M}}
\newcommand{\Op}{\operatorname{O}}
\DeclareMathOperator{\E}{\amsmathbb{E}}
\DeclareMathOperator{\Cov}{\mathsf{Cov}}
\DeclareMathOperator{\trace}{Tr}
\newcommand{\dd}{\,\mathrm{d}}
\newcommand{\dom}{\mathrm{dom}} 
\newcommand{\lrinpro}[3][{}]{ \left\langle #2 , #3 \right\rangle_{#1} }
\newcommand{\inpro}[3][{}]{ \langle #2 , #3 \rangle_{#1} }
\newcommand{\dualp}[3][{}]{ {}_{#1}\langle #2 , #3 \rangle_{#1^*} }
\newcommand{\norm}[2][{}]{\| #2 \|_{#1}}
\newcommand{\lrnorm}[2][{}]{\left\| #2 \right\|_{#1}}
\newcommand{\Bignorm}[2][{}]{\Big\| #2 \Big\|_{#1}}
\newtheorem{lemma}{Lemma}[section]
\newtheorem{proposition}[lemma]{Proposition}
\newtheorem{theorem}[lemma]{Theorem}
\theoremstyle{remark}
\newtheorem{remark}[lemma]{Remark}
\newtheorem{proofpart}{Part}
\theoremstyle{definition}
\newtheorem{assumption}[lemma]{Assumption}
\newtheorem{example}[lemma]{Example}
\title[Approximation of SPDE bridges]{SPDE bridges with observation noise and their spatial approximation}
\date{\today}
\author[G.~di~Nunno]{Giulia di Nunno} \address[Giulia di Nunno]{\newline The Faculty of Mathematics and Natural Sciences
	\newline Department of Mathematics
	\newline Postboks 1053
	\newline Blindern
	\newline 0316 Oslo, Norway.} \email[]{giulian@math.uio.no}
\author[S.~Ortiz-Latorre]{Salvador Ortiz--Latorre} \address[Salvador Ortiz-Latorre]{\newline The Faculty of Mathematics and Natural Sciences
	\newline Department of Mathematics
	\newline Postboks 1053
	\newline Blindern
	\newline 0316 Oslo, Norway.} \email[]{salvadoo@math.uio.no}
\author[A.~Petersson]{Andreas Petersson} \address[Andreas Petersson]{\newline The Faculty of Mathematics and Natural Sciences
	\newline Department of Mathematics
	\newline Postboks 1053
	\newline Blindern
	\newline 0316 Oslo, Norway.} \email[]{andreep@math.uio.no}
\thanks{\textit{Acknowledgement.} The research leading to these results was funded within the project STORM: Stochastics for Time-Space Risk Models, from the Research Council of Norway (RCN), project number: 274410. The authors wish to express many thanks to an anonymous referee who helped to improve the results and the presentation.}
\subjclass{60H35, 60H15, 60J60, 65M12, 65M60, 35R60}
\keywords{stochastic partial differential equations, conditional distributions in Hilbert spaces, finite element method, spectral method, stochastic reaction-diffusion equations}
\begin{document}

\begin{abstract}
This paper introduces SPDE bridges with observation noise and contains an analysis of their spatially semidiscrete approximations. The SPDEs are considered in the form of mild solutions in an abstract Hilbert space framework suitable for parabolic equations. They are assumed to be linear with additive noise in the form of a cylindrical Wiener process. The observational noise is also cylindrical and SPDE bridges are formulated via conditional distributions of Gaussian random variables in Hilbert spaces. A general framework for the spatial discretization of these bridge processes is introduced. Explicit convergence rates are derived for a spectral and a finite element based method. It is shown that for sufficiently rough observation noise, the rates are essentially the same as those of the corresponding discretization of the original SPDE.
\end{abstract}

	\maketitle

\section{Introduction}
%\label{sec:introduction}

In the geophysical sciences, phenomena that involve time and space are ubiquitous and models describing their behavior grow, out of necessity, ever more complex. A common way of capturing this complexity is to include randomness in the models. This could, for instance, model unknown past sources, measurement errors or effects on different physical scales. A popular class of such models consist of stochastic partial differential equations (SPDEs). Apart from the geophysical sciences, such models occur in finance \cite{F01}, cell biology \cite{PFABS21} and many other areas \cite{DPZ14,LR17}.  

In this paper we consider, in an abstract setting, an example of an SPDE that captures many important characteristics of more complex models: a linear stochastic reaction-diffusion equation on a bounded domain $\cD \subset \R^d$, $d=1, 2, 3$. This is given by
\begin{equation}
	\label{eq:react-diff}
	\begin{alignedat}{2}		
		\frac{\partial X}{\partial t}(t,\xi) + \cA X(t,\xi) &= \frac{\partial W}{\partial t}(t,\xi) &&\text{ for } t \in (0,T], \xi \in \cD, T < \infty, \\
		X(0,\xi) &= x(\xi),   &&\text{ for } \xi \in \cD.
	\end{alignedat}
\end{equation}
Here $\cA = -\sum^{d}_{i,j=1} \frac{\partial}{\partial \xi_i} a_{i,j}  \frac{\partial}{\partial \xi_j} + a_0$ is an elliptic operator with sufficiently smooth coefficients $(a_{i,j})_ {i, j=1^d}$, $a_0$ along with suitable boundary conditions. The noise term denoted by $\partial W / \partial t$ is Gaussian, white in time and (possibly) correlated in space, and $x$ is the initial state of the SPDE. This can be seen as a simplification of SPDEs used to model, e.g., the dynamics of sea surface temperature anomalies \cite{HH87, P89, PR96}. We understand this equation as an infinite-dimensional stochastic differential equation (SDE) of It\^o type in the framework of~\cite{DPZ14}. We consider its mild solution 
\begin{equation}
	\label{eq:mild-sol-intro}
	X^x(t) := S(t) x + \int^t_0 S(t-s) \dd W(s), t \in [0,T],
\end{equation}
as a stochastic process $X^x$ taking values in the Hilbert space $H = \cL^2(\cD)$. Here, $W$ is a Wiener process $W$ in $H$ with spatial covariance (operator) $Q$. The operator valued function $S$ is an analytic $C_0$-semigroup generated by $-A$. In the setting of~\eqref{eq:react-diff}, $A$ is the operator $\cA$ regarded as an unbounded linear operator on $H$. 

We are concerned with the SPDE bridge $X^{x,y}$ with observation noise associated to~\eqref{eq:react-diff}. Formally, this is the process $X^x$ conditioned on having observed $X^x(T) + Z = y$, where $Z$ is a Gaussian noise (the observation noise) in $H$, independent of $W$. We rigorously define such bridge processes in a framework inspired by~\cite{S91,GM06,GM08}. Therein, the authors consider SPDE bridges without observation noise, also called pinned Ornstein--Uhlenbeck processes in infinite dimensions. They use the fact that $(X^x,X^x(T))$ is a pair of jointly Gaussian random variables in $L^2([0,T],H) \oplus H$ to derive an expression for $X^x$ conditioned on $X^x(T) = y$, see Section~\ref{sec:spde-bridges} below for details. The resulting process is used to investigate the Markov transition semigroup for a semilinear SPDE. Our first goal with this paper is to provide a rigorous theoretical framework for SPDE bridges that includes observation noise.

For finite-dimensional SDEs, bridge processes are well understood. They are, for example, used in solving inverse problems related to SDEs, such as parameter estimation in a Bayesian setting. This consists of sampling SDE bridges, either with or without observation noise, and employing these in a Markov Chain Monte Carlo algorithm \cite{SMZ17}. Our work can be seen as a stepping stone towards such parameter estimation in an SPDE setting. In order to sample SDE bridges, they must be approximated numerically. More precisely, they are discretized in time, resulting in an error. This can be quantified (see, e.g., \cite{PRS13}), making sure that it does not affect the overall convergence of an algorithm such as the ones employed in \cite{SMZ17}. If we want to consider an analogous approach to parameter estimation in an SPDE setting, approximation errors have to be quantified for both a temporal and a spatial discretization. Our second, and major, goal with this paper is to analyze spatially semidiscrete approximations of SPDE bridges with observation noise. This is the discretization direction unique to SPDEs. 

Our extension of the framework of~\cite{GM06,GM08} to include observation noise provides three advantages. First, it is a more natural setting for the inclusion of measurement errors in the inverse problem, if we think of the observation of $X^x(T)$ as a measurement. Moreover, it allows for more general covariance operators $Q$, since these do not have to be assumed to be injective, as in~\cite{S91,GM06,GM08}. Finally, it allows us to derive convergence rates for the spatial approximations of $X^{x,y}$ that we consider. We consider both spectral and finite element type methods, cf.\ \cite{JK09, K14,LPS14}. Applied to forward problems (i.e., approximating $X^x$), the theory of such spatial approximations is by now relatively mature. In contrast, it is still relatively rare to find numerical analyses of methods for solving inverse problems (such as parameter estimation) related to~\eqref{eq:mild-sol-intro} \cite{C18}. In particular, to the best of our knowledge, the derivation of convergence rates for spatial approximations of SPDE bridges (with our without observation noise) has not been considered before in the literature.

The key assumption we have to make in order to obtain our result is that the covariance of $Z$, denoted $\tilde Q$, commutes with the orthogonal projection associated to the given discretization method we consider (equivalently, $\tilde Q$ is invariant on the corresponding discretization space, see Remark~\ref{rem:commutativity}). As a special case, this includes the setting that $\tilde Q = \epsilon I$ for some $\epsilon > 0$, with $I$ denoting the identity operator. One of the main results of our paper is that in this case, the convergence rate, measured uniformly in time, for an SPDE bridge with spatially white observation noise is, asymptotically, no worse than the convergence rate for the corresponding discretization of the mild solution~\eqref{eq:mild-sol-intro}. See Remark~\ref{rem:spectral-unconditioned-rate} for the spectral method and Remark~\ref{rem:fem-unconditioned-rate} for the finite element method.

Next, we describe the structure of the paper. In Section~\ref{sec:preliminaries} we give the necessary mathematical background to our problem. We introduce Gaussian random variables in Banach spaces and cylindrical Gaussian random variables in Hilbert spaces. In particular, we focus on the notions of $\gamma$-radonifying measures and the reproducing kernel Hilbert spaces associated to Gaussian measures. We also reiterate some well-known results regarding the conditional distribution of Hilbert space valued Gaussian random variables with respect to another such variable. Thereafter, we describe the functional analytic framework we employ. In Section~\ref{sec:spde-bridges} the SPDE we consider is introduced, along with its mild solution. Thereafter, we introduce the bridge process with observation noise $Z$ that is our main object of study. The noise $Z$ is considered in an abstract Wiener space framework by treating it as a random variable in a possibly negative order Hilbert space. Along with the Gaussian law of the data involved, this leads to an explicit expression of the SPDE bridge with observation noise:
\begin{equation*}
	X^{x,y}(t) := X^x(t) - K(t) ((Q(T) + \tilde Q) A^{\eta})^{-\frac{1}{2}} (X^x(T) + Z - y).
\end{equation*}
Here $Q(t)$ is the covariance of $X(t)$, $K(t)$ is a mapping related to a formula for the conditional expectation of Gaussian random variables in Hilbert spaces and $A^{\eta}$ is a fractional power of $A$. Two facts make the analysis particularly challenging. First: $((Q(T) + \tilde Q) A^{\eta})^{-\frac{1}{2}}$ is necessarily an unbounded operator on the space on which $X^x(T) + Z$ is considered as a random variable. Second: $X^{x,y}(t)$ is only well-defined for a.e.\ $y$ on a typically negative order space. In Section~\ref{sec:approximation} we introduce an abstract spatially semidiscrete approximation $X_V^{x,y}$ of $X^{x,y}$, which takes values in a finite-dimensional subspace $V \subset H$, and we derive two key lemmas related to the approximation of conditional expectations of $X^0$. These are applied to concrete spatial approximations in the final two sections of the paper. In Section~\ref{sec:spectral-approximation} we set $V = V_N := \mathrm{span}\{e_1, \ldots, e_N\}$, where $(e_j)_{j=1}^\infty$ is the eigenbasis associated to $A$, to obtain a spectral approximation of $X^{x,y}$. In Section~\ref{sec:approximation-fem} we set $V = V_h$, a space of piecewise linear polynomials, and obtain a finite element approximation of $X^{x,y}$. We derive convergence rates for both types of approximations in the space $L^p(\Omega,\cC([0,T],H))$ for $p \ge 1$, as either $N \to  \infty$ or $h \to 0$. 

We adopt the notion of generic constants, which may vary from occurrence to occurrence and are independent of any parameter of interest, such as spatial step sizes. By the notation $a \lesssim b$ we indicate the existence of a generic constant such that $a \le C b$.
	
\section{Preliminaries}
\label{sec:preliminaries}

\subsection{Gaussian random variables in infinite dimensions}
%\label{subsec:probability-in-inf-dim}

In this section we review basic results for covariance operators, conditional distributions and Gaussian random variables in  possibly infinite-dimensional spaces, starting with some operator theory. All Banach spaces appearing in this paper are real and separable, so we assume this for the Banach spaces in this section. Let $(U_1,U_2)$ be a pair of such spaces with norm $\norm[U_j]{\cdot}$ and duality pairing $\dualp[U_j]{\cdot}{\cdot}$, $j = 1, 2$. We denote by $(\cL(U_1,U_2), \norm[\cL(U_1,U_2)]{\cdot})$ the space of bounded linear operators from $U_1$ to $U_2$ equipped with the operator norm and write $\cL(U)$ for this space when $U_1 = U_2 = U$. When $(U_1,U_2) = (H_1,H_2)$ are Hilbert spaces with inner products $\inpro[H_j]{\cdot}{\cdot}$, $j = 1, 2$, we write $\cL_1(H_1,H_2)$ and $\cL_2(H_1,H_2)$ for the subspaces of trace class (or nuclear) and Hilbert--Schmidt operators, respectively. For $H_1 = H_2 = H$ we use the shorthand notations $\cL_1(H)$ and $\cL_2(H)$.
An operator $\Gamma \in \cL_1(H_1,H_2)$ if there are two sequences $(a_j)_{j=1}^\infty \subset H_1$, $(b_j)_{j=1}^\infty \subset H_2$ such that 
\begin{equation*}
	%\label{eq:trace_characterization}
	\Gamma x = \sum_{j = 1}^{\infty} \inpro[H_1]{x}{a_j} b_j  
\end{equation*} 
for all $x \in H_1$ and 
\begin{equation*} 
	\sum_{j = 1}^{\infty} \norm[H_1]{a_j} \norm[H_2]{b_j} < \infty. 
\end{equation*}
The space $\cL_1(H_1,H_2)$ is a separable Banach space with norm
\begin{equation*}
	%\label{eq:trace_norm}
	\norm[\cL_1(H_1,H_2)]{\Gamma} := \inf_{\substack{(a_j) \subset H_1 \\ (b_j) \subset H_2}} \left\{ \sum_{j = 1}^{\infty} \norm[H_1]{a_j} \norm[H_2]{b_j} : \Gamma = \sum_{j = 1}^{\infty} \inpro[H_1]{\cdot}{a_j} b_j \right\},
\end{equation*}
see~\cite[Appendix~B]{PR07}, while $\cL_2(H_1,H_2)$ is a separable Hilbert space with inner product
\begin{equation*}
	\inpro[\cL_2(H_1,H_2)]{\Gamma_1}{\Gamma_2} := \sum_{j = 1}^{\infty} \inpro[H_2]{\Gamma_1 e_j}{\Gamma_2 e_j}
\end{equation*}
for $\Gamma_1, \Gamma_2 \in \cL_2(H_1,H_2)$ and an arbitrary orthonormal basis $(e_j)_{j = 1}^\infty$ of $H_1$. 
We have $\Gamma \in \cL_i(H_1,H_2)$ if and only if $\Gamma^* \in \cL_i(H_2,H_1)$ with 
\begin{equation*}
	%\label{eq:schattenadjoint}
	\norm[\cL_i(H_1,H_2)]{\Gamma} = \norm[\cL_i(H_2,H_1)]{\Gamma^*}
\end{equation*}
for $i \in \{1,2\}$. Here $\Gamma^*$ denotes the adjoint of $\Gamma$. Let $(H_3,H_4)$ be another pair of Hilbert spaces. If $\Gamma_1 \in \cL(H_1,H_2)$, $\Gamma_3 \in \cL(H_3,H_4)$ and $\Gamma_2 \in \cL_i(H_2,H_3)$ for some $i \in \{1,2\}$, then $\Gamma_3 \Gamma_2 \Gamma_1 \in \cL_i(H_1,H_4)$ and 
\begin{equation}
	\label{eq:schatten-bound-1}
	\norm[\cL_i(H_1,H_4)]{\Gamma_3 \Gamma_2 \Gamma_1} \le  \norm[\cL(H_3,H_4)]{\Gamma_3} \norm[\cL_i(H_2,H_3)]{\Gamma_2} \norm[\cL(H_1,H_2)]{\Gamma_1}.
\end{equation}
Moreover, if $\Gamma_1 \in \cL_2(H_1,H_2)$ and $\Gamma_2 \in \cL_2(H_2,H_3)$, then $\Gamma_2 \Gamma_1 \in \cL_1(H_1,H_3)$ and
\begin{equation}
	\label{eq:schatten-bound-2}
	\norm[\cL_1(H_1,H_3)]{\Gamma_2 \Gamma_1} \le \norm[\cL_2(H_2,H_3)]{\Gamma_2} \norm[\cL_2(H_1,H_2)]{\Gamma_1}. 
\end{equation}

For $\Gamma \in \cL(H_1)$, the operator range $\Gamma(H_1)$ is a Hilbert space when equipped with the inner product $\inpro[\Gamma(H_1)]{\cdot}{\cdot} := \inpro[H_1]{\Gamma^{-1}\cdot}{\Gamma^{-1}\cdot}$. Here $\Gamma^{-1}$ is the pseudo-inverse of $\Gamma$, i.e., $\Gamma^{-1} v$ is for $v \in \Gamma(H_1)$ the unique element $w \in \ker(\Gamma)^\perp$, the orthogonal complement of $\ker(\Gamma)$, fulfilling $\Gamma w = v$, see~\cite[Appendix~C]{PR07}. For $\Gamma \in \cL_1(H_1)$, the trace of $\Gamma$ is defined by $\trace(\Gamma) := \sum^\infty_{j=1} \inpro[H_1]{\Gamma e_j}{e_j},$ for $\Gamma \in \cL_1(H_1)$ and an arbitrary orthonormal basis $(e_j)_{j=1}^\infty$. It holds that $|\trace(\Gamma)| \le \norm[\cL_1(H_1)]{\Gamma}$,
with equality if and only if $\Gamma \in \Sigma^+(H_1)$, the space of positive semidefinite symmetric operators on $H_1$.

Let now $(\Omega, \cA, (\cF_t)_{t \in [0,T]}, \IP)$ be a complete filtered probability space satisfying the usual conditions, which is to say that $\cF_0$ contains all $\IP$-null sets and $\cF_t = \cap_{s > t} \cF_s$ for all $t \in [0,T]$. A measurable mapping $X \colon \Omega \to U$ is called a $U$-valued random variable. Since $U$ is separable, this is equivalent to requiring that $X$ is strongly measurable \cite[Corollary~1.1.10]{HvNVW17a}. By $L^p(\Omega,U)$, $p \in [1, \infty)$, we denote the Banach space of all $U$-valued random variables~$X$ with finite norm $\norm[L^p(\Omega,U)]{X} := (\E[\norm[U]{X}^p])^{1/p}$. 
In the case of Hilbert space-valued random variables $X \in L^2(\Omega,H_1)$, $Y \in L^2(\Omega,H_2)$, we define the cross-covariance (operator) $\Cov(X,Y) \colon H_2 \to H_1 $ of $X$ and $Y$ by 
\begin{equation*}
	\Cov(X,Y) u := \E[\inpro[H_2]{(Y - \E[Y])}{u} (X - \E[X])]
\end{equation*}
and the covariance (operator) of $X$ by $\Cov(X) := \Cov(X,X)$.
Both operators are of trace class \cite{B00}. In particular, $\norm[\cL_1(H_1)]{\Cov(X)} = \trace(\Cov(X)) = \E[\norm[H_1]{X-\E[X]}^2]$ whenever either of these quantities are finite. Moreover, if $(e_j)_{j=1}^\infty$ is an arbitrary orthonormal basis of $H_1$ we have by~\cite[Proposition~I.1.10]{VTC87} that the $\sigma$-algebra $\sigma(X)$ generated by $X$ fulfills
\begin{equation}
	\label{eq:sigma-algebra-X}
	\sigma(X) = \sigma((\inpro[H_1]{X}{e_j})_{j=1}^\infty).
\end{equation} 

A Banach-space valued random variable $X \colon \Omega \to U$ is said to be Gaussian if $\dualp[U]{X}{u}$ is a Gaussian real-valued random variable for all $u$ in the continuous dual space $U^*$. Then $X \in L^p(\Omega,U)$ for all $p \ge 1$. Equivalently, the image measure $\IP \circ X^{-1}$ is Gaussian on $U$.
For such random variables, we define a covariance operator $\Cov(X) \colon U^* \to U$ by $\Cov(X)u := \E[\dualp[U]{X-\E[X]}{u} (X-\E[X])]$ for $u \in U^*$. Let us denote by $H_X \subset U$ the reproducing kernel Hilbert space of the Banach-space valued random variable $X$. This space is given by the completion of $\Cov(X)(U^*)$ with respect to the norm induced by the inner product 
$
\inpro[H_X]{\Cov(X) u}{\Cov(X) v} := \dualp[U]{\Cov(X)u}{v}.
$
When $U=H_1$ is a Hilbert space, the definition of $\Cov(X)$ coincides with the previously given definition. Since the operator $\Cov(X) \in \Sigma^+(H)$, it has a unique square root $\Cov(X)^{1/2} \in \Sigma^+(H)$ and $H_X = \Cov(X)^{1/2}(H_1)$. If $H_2 \subset H_1$ is a subspace, continuously embedded into $H_1$, and if $\norm[L^2(\Omega,H_2)]{X} < \infty$, then the covariance $\Cov_{H_2}(X)$ of $X$ as an $H_2$-valued random variable is related to $\Cov(X) = \Cov_{H_1}(X)$ through the identity
\begin{equation}
	\label{eq:covariance-in-subspace}
	\Cov(X) = I_{H_2 \hookrightarrow H_1} \Cov_{H_2}(X) I^*_{H_2 \hookrightarrow H_1}.
\end{equation}
This follows from the fact that $\Cov_{H_1}(X)^{1/2}(H_1) = \Cov_{H_2}(X)^{1/2}(H_2)$, along with the factorization $\Cov(X) = I_{\Cov(X)^{1/2}(H_1)\hookrightarrow H_1} I^*_{\Cov(X)^{1/2}(H_1)\hookrightarrow H_1}$ \cite{R11}. For more details on Banach-space valued Gaussian random variables, see, e.g.,~\cite{VTC87, HvNVW17b}.

We say that $X$ is a cylindrical random variable in a Hilbert space $H$, if it is a linear map $X$ from $H$ into the space of real-valued random variables. This is a generalization of the notion of an $H$-valued random variable $Y$, since we may interpret $Y$ as a cylindrical random variable $X$ by identifying $X$ with the random linear functional $\inpro[H]{Y}{\cdot}$. If there, for a given cylindrical random variable $X$, exists such a $Y$, $X$ is said to be induced by $Y$. We say that $X$ is a strongly Gaussian cylindrical random variable with covariance $\Cov(X)$ and zero mean if $X(u)$ is a real-valued Gaussian random variable with zero mean for all $u \in H$ and there is an operator $\Cov(X) \in \Sigma^+(H)$ such that $\E[X(u)X(v)] = \inpro[H]{\Cov(X) u}{v}$ for all $u,v \in H$. For all operators in $\Sigma^+(H)$, a linear map with the required properties can be constructed. Since we will not deal with so called weakly Gaussian cylindrical random variables (see~\cite{R11} for this, as well as a general introduction to cylindrical measures), from here on we omit the word strongly. Moreover, we introduce the formal notation $\inpro[H]{X}{u} : = X(u)$ for all $u \in H$. 
\begin{remark}
	\label{rem:cyl-notation}
	The notation $\inpro[H]{X}{u} : = X(u)$ for a cylindrical random variable is only formal. We emphasize that $X \notin H$ in a mean-square sense.
\end{remark}
We note that 
\begin{equation*}
	\E[\inpro[H]{X}{u-v}^2] = \norm[H]{\Cov(X)^{\frac{1}{2}} (u-v)}^2 \le \norm[\cL(H)]{\Cov(X)^{\frac{1}{2}}}^2 \norm[H]{u-v}^2
\end{equation*}
for $u,v \in H$. This is to say that $\inpro[H]{X}{\cdot}$ is continuous on $H$ with respect to $L^2(\Omega,\R)$.
When $\Cov(X) = I$, $X$ is said to be a standard cylindrical random variable in $H$.
An operator $\Gamma \colon H \to U$, where $U$ is a Banach space, is said to be $\gamma$-radonifying if, for a standard cylindrical random variable $X$ in $H$, there is a $U$-valued Gaussian random variable $Y$ such that $\dualp[U]{Y}{u} = \inpro[H]{X}{\Gamma^* u}$ for all $u \in U^*$. The space $\gamma(H,U)$ of $\gamma$-radonifying operators is a Banach space with norm
\begin{equation}
	\label{eq:gamma-rad-norm}
	\norm[\gamma(H,U)]{\Gamma}^2 := \E\Big[\Bignorm[U]{\sum_{j = 1}^\infty z_j  \Gamma e_j}^2 \Big],
\end{equation}
where $(z_j)_{j=1}^\infty$ is a sequence of iid Gaussian random variables and $(e_j)_{j=1}^\infty$ is an arbitrary orthonormal basis of $H$ \cite[Corollary~3.21]{VN10}. The sum inside this expression converges in $L^2(\Omega,U)$ if and only if $\Gamma \in \gamma(H,U)$.
When $H=H_1$ and $U=H_2$ is a Hilbert space, $\gamma(H_1,H_2) = \cL_2(H_1,H_2)$ with equal norms \cite[Proposition~13.5]{VN10}. Like the space of Hilbert--Schmidt operators, the space $\gamma(H,U)$ is an operator ideal \cite[Theorem~6.2]{VN10}. If $X$ is a $U$-valued Gaussian random variable, then, by \cite[Proposition~8.6]{VN10},  
\begin{equation}
	\label{eq:gamma-rad-expectation-property}
	\E\Big[ \norm[U]{X}^2 \Big] = \norm[\gamma(H_X,U)]{I_{H_X \hookrightarrow U}}^2 < \infty.
\end{equation}
In fact, all moments of $X$ are finite and can be bounded by a  constant times the $L^2(\Omega,U)$-norm \cite[Proposition~3.14]{H09}.

Gaussian cylindrical random variables $X$ on $H_1$ may be regarded as Gaussian $H_2$-valued random variables, for some larger Hilbert space $H_2 \supset H_1$. We can always construct a real separable Hilbert space $H_2 \hookleftarrow H_1$, where the embedding is dense and continuous, such that $I_{Q^{1/2}(H_1) \hookrightarrow H_2} \in \cL_2(Q^{1/2}(H_1),H_2)$, c.f.\ 
\cite[Remark~2.5.1]{PR07}. Let $H_2$ be a Hilbert space with this property. By~\cite[Lemma~VI.1.8]{DS58}, the closure of $\Cov(X)(H_1)$ in $\Cov(X)^{1/2}(H_1)$ is the set of all vectors that are perpendicular to $\{v \in \Cov(X)^{1/2}(H_1) : \Cov(X)^{1/2} v = 0 \}$ in $\Cov(X)^{1/2}(H_1)$. But since $\inpro[H_1]{\Cov(X)v}{v} = \norm[H_1]{\Cov(X)^{1/2}v}^2$ for $v \in H_1$, $\ker(\Cov(X))=\ker(\Cov(X)^{1/2})$. Therefore $\{v \in \Cov(X)^{1/2}(H_1) : \Cov(X)^{1/2} v = 0 \} = \{0\}$ so that $\Cov(X)(H_1)$ is dense in $\Cov(X)^{1/2}(H_1)$. Since $H_1$ is separable, so is $\Cov(X)^{1/2}(H_1)$. We may therefore pick an orthonormal basis $(e_j)^\infty_{j=1}$ of $\Cov(X)^{1/2}(H_1)$ such that $e_j \in \Cov(X)(H_1)$ for all $j \in \N$. Hence, we may define
\begin{equation}
	\label{eq:trace-version-of-cylindrical-rv}
	\tilde X := \sum_{j = 1}^{\infty} \inpro[H_1]{X}{\Cov(X)^{-1} e_j} e_j.
\end{equation}
Note that, for any $N \in \N$,
\begin{align*}
	&\E\left[\Bignorm[H_2]{\sum_{j = 1}^{N} \inpro[H_1]{X}{\Cov(X)^{-1} e_j} e_j}^2\right] \\ &\quad= \sum_{i,j = 1}^{N} \E\left[\inpro[H_1]{X}{\Cov(X)^{-1} e_i} \inpro[H_1]{X}{\Cov(X)^{-1} e_j}\right] \inpro[H_2]{e_i}{e_j} \\
	&\quad= \sum_{i,j = 1}^{N} \inpro[H_1]{\Cov(X)^{-\frac{1}{2}}e_i}{\Cov(X)^{-\frac{1}{2}}e_j} 
	\inpro[H_2]{e_i}{e_j} \\
	&\quad= \sum_{i,j = 1}^{N} \inpro[\Cov(X)^{\frac{1}{2}}(H_1)]{e_i}{e_j} 
	\inpro[H_2]{e_i}{e_j} = \sum_{i = 1}^{N} \norm[H_2]{e_i}^2 \le \norm[\cL_2(\Cov(X)^{\frac{1}{2}}(H_1),H_2)]{I_{\Cov(X)^{\frac{1}{2}}(H_1) \hookrightarrow H_2}}^2,
\end{align*}
so that $\tilde X$ is indeed well-defined in $L^2(\Omega,H_2)$. Since $(\inpro[H_1]{X}{\Cov(X)^{-1} e_j})^\infty_{j=1}$ is a sequence of independent and Gaussian random variables, $\tilde X$ is a Gaussian $H_2$-valued random variable. Its covariance is given by
\begin{align*}
	\inpro[H_2]{\Cov(\tilde X) u }{v} &= \sum^\infty_{j=1} \inpro[H_2]{u}{e_j} \inpro[H_2]{v}{e_j}  \\
	&= \sum^\infty_{j=1} \lrinpro[\Cov(X)^{\frac{1}{2}}(H_1)]{I^*_{\Cov(X)^\frac{1}{2}(H_1) \hookrightarrow H_2} u}{e_j} \\ &\hspace{4em}\times \lrinpro[\Cov(X)^{\frac{1}{2}}(H_1)]{I^*_{\Cov(X)^\frac{1}{2}(H_1) \hookrightarrow H_2} v}{e_j} \\
	&= \lrinpro[H_2]{I_{\Cov(X)^\frac{1}{2}(H_1) \hookrightarrow H_2} I^*_{\Cov(X)^\frac{1}{2}(H_1) \hookrightarrow H_2}u}{v}
\end{align*}
for $u,v \in H_2$. It still holds that $\Cov(X) = I_{\Cov(X)^{1/2}(H_1) \hookrightarrow H_1} I^*_{\Cov(X)^{1/2}(H_1) \hookrightarrow H_1}$, whence 
\begin{equation}
	\label{eq:covariance-in-supspace}
	\begin{split}
		\Cov(\tilde X) &= I_{\Cov(\tilde X)^\frac{1}{2}(H_2) \hookrightarrow H_2} I^*_{\Cov(\tilde X)^\frac{1}{2}(H_2) \hookrightarrow H_2} \\ 
		&= I_{\Cov(X)^\frac{1}{2}(H_1) \hookrightarrow H_2} I^*_{\Cov(X)^\frac{1}{2}(H_1) \hookrightarrow H_2} = I_{H_1 \hookrightarrow H_2} \Cov(X) I^*_{H_1 \hookrightarrow H_2}.
	\end{split}
\end{equation}
In a certain sense, the distribution of $\tilde X$ is invariant with respect to the choice of the space $H_2$. Indeed, the reproducing kernel Hilbert space $\Cov(\tilde X)^{1/2}(H_2) = \Cov(X)^{1/2}(H_1)$ remains the same regardless of what space $H_2$ we use. 

We can expand $X$ as a cylindrical random variable in terms of $\tilde X$ in a converse version of~\eqref{eq:trace-version-of-cylindrical-rv}. This can be done using the fact that since $H_1 \hookrightarrow H_2$ densely and continuously, there is a linear, self-adjoint, densely defined and positive semidefinite operator $B$ on $H_2$ such that $\norm[H_1]{u} = \norm[H_2]{B^{1/2}u}$ for all $u \in \cD(B^{1/2})=H_1$. We summarize the argument for this from~\cite[Section~1.2]{LM72}. Let $\cD(B)$ be the set of all $u \in H_1$ such that $\inpro[H_1]{u}{\cdot}$ is continuous with respect to the $H_2$ norm. Then, since $H_1$ is dense in $H_2$, the identity $\inpro[H_1]{u}{v} = \inpro[H_2]{B u}{v}$ for all $v \in H_1$ defines an unbounded, self-adjoint and positive definite linear operator with domain $\cD(B)$ (so fractional powers are well-defined). To see that $\cD(B)$ is dense in $H_1$, note that by the Hahn--Banach theorem, this holds if and only if 
\begin{equation*}
	\inpro[H_1]{v}{y} = 0 \, \forall y \in \cD(B) \implies v = 0
\end{equation*}
for arbitrary $v \in H_1$. Let $v \in H_1$ be arbitrary and suppose that $\inpro[H_1]{v}{y} = 0 \, \forall y \in \cD(B)$. In particular, this holds for $y = I_{H_1 \hookrightarrow H_2}^* u$, $u \in H_2$, since $\inpro[H_1]{y}{\cdot} = \inpro[H_2]{u}{\cdot}$ is continuous on $H_2$. Therefore $\inpro[H_2]{v}{u} = 0$ for all $u \in H_2$, so $v = 0$. To see that $\cD(B^{1/2}) = H_1$, note that for $u \in \cD(B)$, $\norm[H_2]{B^{1/2} u }^2 = \inpro[H_2]{B u}{u} = \norm[H_1]{u}^2$. By density, this extends to $H_1$, which shows that $H_1 \subset \cD(B^{1/2})$. If $v \in \cD(B^{1/2})$ then 
\begin{align*}
	\norm[H_1]{v} = \hspace{-2pt} \sup_{\substack{u \in H_1 \\ \norm[H_1]{u} = 1}} |\inpro[H_1]{v}{u}| &\le \hspace{-2pt} \sup_{\substack{u \in \cD(B) \\ \norm[H_1]{u} = 1}} |\inpro[H_1]{v}{u}| \\ &= \hspace{-2pt} \sup_{\substack{u \in \cD(B) \\ \norm[H_1]{u} = 1}} |\inpro[H_2]{v}{B u}| = \hspace{-2pt} \sup_{\substack{u \in \cD(B) \\ \norm[H_1]{u} = 1}} |\inpro[H_2]{B^{1/2} v}{B^{1/2} u}| \le \norm[H_2]{B^{1/2} v},
\end{align*}
which shows that $\cD(B^{1/2}) \subset H_1$. Here we made use of the Cauchy--Schwarz inequality along with the identity $\norm[H_2]{B^{1/2} u } = \norm[H_1]{u}$ for $u \in H_1$, which we proved above. By density of $\cD(B)$ in $\cD(B^{1/2})$, we may pick an orthonormal basis $(f_j)_{j=1}^\infty$ of $H_1$ contained in $\cD(B)$. By definition of the covariance, we find that for $f_j \notin \ker(\Cov(X))$,
\begin{align*}
	&\E\Big[\Big|\inpro[H_1]{X}{f_j} - \sum_{k = 1}^N  \inpro[H_1]{X}{\Cov(X)^{-1} e_k} \inpro[H_2]{e_k}{B f_j}\Big|^2\Big] 
	\\
	&\,=\E[\inpro[H_1]{X}{f_j}^2] - 2 \sum_{k=1}^{N} \inpro[H_1]{f_j}{e_k} \inpro[H_2]{e_k}{B f_j} \\ &\qquad+ \sum_{k,\ell = 1}^{N} \inpro[H_1]{\Cov(X)^{-1/2} e_k}{\Cov(X)^{-1/2} e_\ell} \inpro[H_2]{e_k}{B f_j} \inpro[H_2]{e_\ell}{B f_j} \\
	&\,= \norm[H_1]{\Cov(X)^{1/2} f_j }^2 -2 \sum_{k=1}^{N} \inpro[H_2]{B f_j}{e_k} \inpro[H_1]{f_j}{e_k} + \sum^N_{k=1} \inpro[H_2]{B f_j}{e_k}^2 \\
	&\,= \norm[\Cov(X)^{1/2}(H_1)]{\Cov(X) f_j}^2 - 2 \sum_{k=1}^{N} \inpro[H_1]{f_j}{e_k}^2 + \sum_{k=1}^{N} \inpro[H_1]{f_j}{e_k}^2\\
	&\,= \norm[\Cov(X)^{1/2}(H_1)]{\Cov(X) f_j}^2 - \sum_{k = 1}^N \inpro[\Cov(X)^{1/2}(H_1)]{\Cov(X) f_j}{e_k}^2,
\end{align*}
so that, in light of~\eqref{eq:trace-version-of-cylindrical-rv}, $\inpro[H_1]{X}{f_j} = \inpro[H_2]{\tilde X}{B f_j}$ for $f_j \notin \ker(\Cov(X))$. This identity also holds true for $f_j \in \ker(\Cov(X))$, since then both sides are zero. Using this along with the continuity of $\inpro[H_1]{X}{\cdot} \colon H_1 \to L^2(\Omega,\R)$, it follows that
\begin{equation}
	\label{eq:cylindrical-rv-from-trace-version}
	\inpro[H_1]{X}{u} = \sum_{j = 1}^\infty \inpro[H_1]{X}{f_j} \inpro[H_1]{u}{f_j} = \sum_{j = 1}^\infty \inpro[H_2]{\tilde X}{B f_j} \inpro[H_1]{u}{f_j}
\end{equation}
for all $u \in H_1$, where the convergence takes place in $L^2(\Omega,\R)$. From here on, we make no notational distinction between $X$ and $\tilde X$.

A pair~$(X,Y)$ of random variables $X$ and $Y$ with values in two Hilbert spaces $H_1$ and $H_2$ is said to be jointly Gaussian if $X \oplus Y$ is an $H_1 \oplus H_2$-valued Gaussian random variable. Then $X$ and $Y$ are independent if and only if $\Cov(X,Y) =0$, cf.~\cite{M84}. We quote a theorem regarding the conditional distribution of $Y$ given $X$ from \cite{GM08}. To make sense of it, we first need a preliminary lemma \cite[Lemma~2.2]{GM08}. Since $\Cov(X)$ is of trace class, it is also compact and thus has an associated eigenbasis $(e_j)_{j=1}^\infty$ along with a sequence of eigenvalues $(\mu_j)_{j=1}^\infty$ which in this case is summable. 	
\begin{lemma}
	\label{lem:conditional-lemma}
	Let $X$ be an $H_1$-valued Gaussian random variable with zero mean, image measure $\mu = \IP \circ X^{-1} \colon H_1 \to \R$ and eigenpairs $(\mu_j, e_j)_{j=1}^\infty$. Let $\Gamma \in \cL_2(H_1,H_2)$. Then, the sum defining the operator
	\begin{equation*}
		\Gamma \Cov(X)^{-1/2} := \sum_{j=1}^{\infty} \frac{1}{\sqrt{\mu_j}} \inpro[H_1]{\cdot}{e_j} \Gamma e_j 
	\end{equation*}
	converges in $L^2((H_1,\mu),H_2)$ and
	\begin{equation*}
		%\label{eq:lem:conditional-lemma-1}		
		\norm[L^2((H_1,\mu),H_2)]{\Gamma \Cov(X)^{-1/2}}^2 := \int_{H_1} \norm[H_2]{\Gamma \Cov(X)^{-1/2} x}^2 \dd \mu(x) = \norm[\cL_2(H_1,H_2)]{\Gamma}^2.
	\end{equation*}
	Moreover, there exists a Borel subspace $\cM \subset H_1$ with $\mu(\cM) = 1$, the operator $\Gamma \Cov(X)^{-1/2}$ is linear on $\cM$ and 
	\begin{equation*}
		\Gamma \Cov(X)^{-1/2}x = \sum_{j=1}^{\infty} \frac{1}{\sqrt{\mu_j}} \inpro[H_1]{x}{e_j} \Gamma e_j
	\end{equation*}
	for all $x \in \cM$.
\end{lemma}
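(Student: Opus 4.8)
The plan is to establish the $L^2((H_1,\mu),H_2)$-convergence together with the norm identity by an orthogonality computation, and the $\mu$-almost-everywhere statement about $\cM$ by a martingale convergence argument. Both rely on one observation: since $X$ is centered Gaussian with $\Cov(X)e_j = \mu_j e_j$, the coordinate functionals $\inpro[H_1]{\cdot}{e_j}$, $j \in \N$, are jointly Gaussian and pairwise uncorrelated, hence independent, under $\mu$, with $\int_{H_1}\inpro[H_1]{x}{e_i}\inpro[H_1]{x}{e_j}\dd\mu(x) = \inpro[H_1]{\Cov(X)e_i}{e_j} = \mu_i\delta_{ij}$. As is standard in this context I take $\Cov(X)$ to be injective, so that $\mu_j > 0$ for all $j$ and $(e_j)_{j=1}^\infty$ is an orthonormal basis of $H_1$. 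Write $S_N x := \sum_{j=1}^N \mu_j^{-1/2}\inpro[H_1]{x}{e_j}\,\Gamma e_j$ for the partial sums.

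For the $L^2$-statement, the displayed orthogonality yields, for $M < N$,
\[
\int_{H_1}\Bignorm[H_2]{S_N x - S_M x}^2\dd\mu(x) = \sum_{i,j=M+1}^N \frac{\inpro[H_2]{\Gamma e_i}{\Gamma e_j}}{\sqrt{\mu_i\mu_j}}\,\mu_i\delta_{ij} = \sum_{j=M+1}^N \norm[H_2]{\Gamma e_j}^2 .
\]
As $\Gamma \in \cL_2(H_1,H_2)$ and $(e_j)_{j=1}^\infty$ is an orthonormal basis, $\sum_{j=1}^\infty \norm[H_2]{\Gamma e_j}^2 = \norm[\cL_2(H_1,H_2)]{\Gamma}^2 < \infty$, so $(S_N)_{N\in\N}$ is Cauchy and converges in $L^2((H_1,\mu),H_2)$ to the operator $\Gamma\Cov(X)^{-1/2}$ of the statement; taking $M = 0$ and letting $N \to \infty$ gives the identity $\norm[L^2((H_1,\mu),H_2)]{\Gamma\Cov(X)^{-1/2}}^2 = \norm[\cL_2(H_1,H_2)]{\Gamma}^2$.

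For the $\mu$-a.e.\ statement, I would put on $(H_1,\mu)$ the filtration $\mathcal{G}_N := \sigma(\inpro[H_1]{\cdot}{e_1},\ldots,\inpro[H_1]{\cdot}{e_N})$. Since $\inpro[H_1]{\cdot}{e_{N+1}}$ is $\mu$-independent of $\mathcal{G}_N$ and centered, $(S_N)_{N\in\N}$ is an $H_2$-valued martingale, bounded in $L^2((H_1,\mu),H_2)$ by the computation above; hence it converges $\mu$-almost everywhere --- and in $L^2$, consistently with the limit found above --- by the martingale convergence theorem for $L^2$-bounded $H_2$-valued martingales. I would then define $\cM$ to be the set of $x \in H_1$ for which $\sum_{j=1}^\infty \mu_j^{-1/2}\inpro[H_1]{x}{e_j}\Gamma e_j$ converges in $H_2$; this set equals $\bigcap_k\bigcup_M\bigcap_{N,N'\ge M}\{x : \norm[H_2]{S_N x - S_{N'} x} \le 1/k\}$, hence is Borel, it satisfies $\mu(\cM) = 1$ by the martingale convergence, and it is a linear subspace because each $S_N$ --- and so the pointwise limit on $\cM$ --- is linear in $x$. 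On $\cM$ the operator $\Gamma\Cov(X)^{-1/2}$, defined there as that pointwise sum, is then linear and coincides $\mu$-a.e.\ with the $L^2$-limit constructed above.

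The main obstacle is exactly this last step: $L^2$-convergence of $(S_N)$ only yields $\mu$-a.e.\ convergence along \emph{some} subsequence, whereas the lemma needs convergence of the full series on a set that is at once of full measure, Borel, and linear. The martingale structure closes the gap --- Doob's maximal inequality bounds the oscillation of $S_N$ over blocks of consecutive indices and upgrades subsequential convergence to full convergence --- after which Borel measurability of the convergence set and linearity of the partial sums are routine. If one drops the standing injectivity of $\Cov(X)$, the only additional bookkeeping is to remove the $\mu$-null set on which the (a.s.\ vanishing) coordinates with $\mu_j = 0$ are nonzero; the argument then goes through with $\norm[\cL_2(H_1,H_2)]{\Gamma}$ replaced by the Hilbert--Schmidt norm of $\Gamma$ restricted to $\ker(\Cov(X))^\perp$.
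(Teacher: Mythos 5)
Your proof is correct. One point of comparison: the paper does not actually prove this lemma---it quotes it as \cite[Lemma~2.2]{GM08}---so your argument supplies a proof rather than paralleling one. On its own merits it is complete: the joint Gaussianity, zero mean and uncorrelatedness (hence independence) of the coordinates $\inpro[H_1]{\cdot}{e_j}$ under $\mu$ give exactly the Cauchy property in $L^2((H_1,\mu),H_2)$ and the norm identity $\sum_j \norm[H_2]{\Gamma e_j}^2 = \norm[\cL_2(H_1,H_2)]{\Gamma}^2$, and the passage from $L^2$- to $\mu$-a.e.\ convergence of the full series---the genuine issue, which you correctly single out---is handled by your $H_2$-valued martingale argument, valid because separable Hilbert spaces have the Radon--Nikodym property (or directly for $L^2$-bounded Hilbert-space-valued martingales). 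A marginally shorter route, and the one this paper itself uses in analogous situations (see the proof of Proposition~\ref{prop:bridge-existence}), is to note that the summands $\mu_j^{-1/2}\inpro[H_1]{\cdot}{e_j}\,\Gamma e_j$ are independent mean-zero Gaussian random variables in $H_2$, so almost-sure and mean-square convergence of the series are equivalent \cite[Corollary~6.4.4]{HvNVW17b} (It\^o--Nisio); your Doob-maximal-inequality mechanism is the same phenomenon in different clothing. Your verification that $\cM$ is a Borel linear subspace of full measure via the Cauchy-criterion description, the identification of the pointwise limit with the $L^2$-limit, and your remark that injectivity of $\Cov(X)$ is implicit in the statement (the factors $1/\sqrt{\mu_j}$ require $\mu_j>0$, consistent with how the lemma is used alongside Theorem~\ref{thm:conditional-gaussian}) are all sound.
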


Note that in Lemma~\ref{lem:conditional-lemma}, it holds that $\Cov(X)^{1/2}(H_1) \subset \cM$, see, e.g., \cite[Lemma~3]{T79}. Moreover, $\Gamma \Cov(X)^{-1/2} \Cov(X)^{1/2} x = \Gamma x$ for $x \in H_1$. The mapping defined by
\begin{equation*}
	v \mapsto \sum_{j = 1}^\infty \frac{1}{\sqrt{\mu_j}}\inpro[H_1]{ X }{e_j} \inpro[H_1]{v}{e_j}
\end{equation*}
defines a standard cylindrical Gaussian random variable in $H_1$ with the sum converging $\IP$-a.s and in $L^2(\Omega,\R)$. Since $\Gamma \in \cL_2(H_1,H_2)$, the cylindrical Gaussian random variable in $H_2$ defined by 
\begin{equation*}
	u \mapsto \sum_{j = 1}^\infty \frac{1}{\sqrt{\mu_j}}\inpro[H_1]{ X }{e_j} \inpro[H_1]{\Gamma^* u}{e_j}
\end{equation*}
is induced by a Gaussian $H_2$-valued random variable, which can be taken to be the random variable $\Gamma \Cov(X)^{-1/2} X$ with $\Gamma \Cov(X)^{-1/2}$ defined as in Lemma~\ref{lem:conditional-lemma}. This operator show up in the next result, which is \cite[Theorem~2.4]{GM08}.

\begin{theorem}
	\label{thm:conditional-gaussian}
	Let $X$ and $Y$ be jointly Gaussian random variables with values in $H_1$ and $H_2$ such that $\Cov(X)$ is an injective operator. Then
	\begin{enumerate}[label=(\roman*)]
		\item $\Cov(X,Y)(H_2) \subset \Cov(X)^{1/2}(H_1)$ and $\Cov(X)^{-1/2} \Cov(X,Y) \in \cL_2(H_2,H_1)$,
		\item $\E[Y | X] = \E[Y] + (\Cov(X)^{-1/2} \Cov(X,Y))^* \Cov(X)^{-1/2} (X-\E[X])$,
		\item the conditional distribution of $Y$ given $X$ is Gaussian with (conditional) mean $\E[Y | X]$ and covariance $\Cov(Y) - (\Cov(X)^{-1/2} \Cov(X,Y))^* \Cov(X)^{-1/2} \Cov(X,Y)$.
	\end{enumerate}
\end{theorem}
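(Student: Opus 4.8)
The plan is to first reduce to the centred case: replacing $X$, $Y$ by $X-\E[X]$, $Y-\E[Y]$ leaves $\sigma(X)$ and all (cross-)covariances unchanged, and all three assertions transform equivariantly under this translation, so I assume $\E[X]=\E[Y]=0$. The argument rests on a factorization of the joint covariance. Since $X\oplus Y$ is Gaussian with values in $H_1\oplus H_2$, the operator $C:=\Cov(X\oplus Y)\in\Sigma^+(H_1\oplus H_2)$ is of trace class, hence so is each of its four blocks $\Cov(X)$, $\Cov(X,Y)$, $\Cov(Y,X)=\Cov(X,Y)^*$, $\Cov(Y)$, and $C^{1/2}$ is Hilbert--Schmidt. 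Writing $C^{1/2}$ in self-adjoint block form with corners $a\colon H_1\to H_1$, $b\colon H_2\to H_1$, $b^*\colon H_1\to H_2$, $c\colon H_2\to H_2$ --- each Hilbert--Schmidt, being obtained from $C^{1/2}$ by composition with coordinate injections and projections and $\cL_2$ an operator ideal --- the identity $C=(C^{1/2})^2$ gives $\Cov(X)=a^2+bb^*$ and $\Cov(X,Y)=ab+bc$.

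For part~(i), the inequalities $aa^*=a^2\le\Cov(X)$ and $bb^*\le\Cov(X)$ together with Douglas' range inclusion theorem show that $\mathrm{Range}(a),\mathrm{Range}(b)\subset\Cov(X)^{1/2}(H_1)$ and that $\Cov(X)^{-1/2}a$ and $\Cov(X)^{-1/2}b$ extend to bounded operators on $H_1$, where $\Cov(X)^{-1/2}$ is the pseudo-inverse of $\Cov(X)^{1/2}$ --- a genuine, closed, densely defined inverse here, since $\Cov(X)$ is injective. Substituting yields $\Cov(X,Y)=\Cov(X)^{1/2}\big[(\Cov(X)^{-1/2}a)b+(\Cov(X)^{-1/2}b)c\big]$, whence $\Cov(X,Y)(H_2)\subset\Cov(X)^{1/2}(H_1)$ and, as $b,c\in\cL_2$ and $\cL_2$ is an ideal, $D:=\Cov(X)^{-1/2}\Cov(X,Y)=(\Cov(X)^{-1/2}a)b+(\Cov(X)^{-1/2}b)c\in\cL_2(H_2,H_1)$. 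Thus $\Cov(X,Y)=\Cov(X)^{1/2}D$ and, taking adjoints of bounded operators, $\Cov(Y,X)=D^*\Cov(X)^{1/2}$; in particular $\Cov(Y,X)e_j=\mu_j^{1/2}D^*e_j$ for the eigenpairs $(\mu_j,e_j)$ of $\Cov(X)$, which form an orthonormal basis of $H_1$ by injectivity.

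For parts~(ii) and~(iii), set $W:=D^*\Cov(X)^{-1/2}X$, which by Lemma~\ref{lem:conditional-lemma} (with $\Gamma=D^*\in\cL_2(H_1,H_2)$) and the discussion following it is a well-defined Gaussian $H_2$-valued random variable, equal to $\sum_{j}\mu_j^{-1/2}\inpro[H_1]{X}{e_j}D^*e_j$ with convergence $\IP$-a.s.\ and in $L^2(\Omega,H_2)$; thus $W$ is $\sigma(X)$-measurable and is an $L^2$-limit of bounded linear images of $X$. A term-by-term computation using $\E[\inpro[H_1]{X}{e_j}\inpro[H_1]{X}{v}]=\mu_j\inpro[H_1]{e_j}{v}$ and $\Cov(Y,X)e_j=\mu_j^{1/2}D^*e_j$ gives $\E\big[\inpro[H_2]{W}{h}\inpro[H_1]{X}{v}\big]=\inpro[H_2]{\Cov(Y,X)v}{h}=\E\big[\inpro[H_2]{Y}{h}\inpro[H_1]{X}{v}\big]$ for all $h\in H_2$, $v\in H_1$, i.e.\ $\Cov(Y-W,X)=0$; since $(X,Y-W)$ is jointly Gaussian (an $L^2$-limit of linear images of $(X,Y)$), this forces $Y-W$ to be independent of $X$, and therefore $\E[Y\mid X]=\E[W\mid X]+\E[Y-W\mid X]=W$, which is the formula in~(ii). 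Analogous term-by-term computations give $\Cov(W)=\Cov(W,Y)=\Cov(Y,W)=D^*D$, hence $\Cov(Y-W)=\Cov(Y)-D^*D$; combining the decomposition $Y=W+(Y-W)$ with the facts that $W=\E[Y\mid X]$ is $\sigma(X)$-measurable and $Y-W$ is a centred Gaussian variable independent of $X$ with covariance $\Cov(Y)-D^*D$, the conditional law of $Y$ given $X$ is Gaussian with mean $\E[Y\mid X]$ and covariance $\Cov(Y)-D^*D=\Cov(Y)-(\Cov(X)^{-1/2}\Cov(X,Y))^*\Cov(X)^{-1/2}\Cov(X,Y)$. Undoing the centring reduction reinstates $\E[X]$ and $\E[Y]$ in~(ii) and~(iii).

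The step I expect to be the main obstacle is the rigorous handling of the unbounded operator $\Cov(X)^{-1/2}$: establishing $\Cov(X,Y)(H_2)\subset\Cov(X)^{1/2}(H_1)$ and, especially, that $D$ is genuinely a Hilbert--Schmidt operator $H_2\to H_1$ --- for which the $C^{1/2}$-block factorization plus Douglas' theorem seems the cleanest route, since Lemma~\ref{lem:conditional-lemma} only controls $\Gamma\Cov(X)^{-1/2}$ as an element of $L^2((H_1,\mu),H_2)$ rather than as a bounded operator --- together with the bookkeeping needed to make sense of $D^*\Cov(X)^{-1/2}X$ as an honest random variable and to justify the term-by-term covariance computations, using the $L^2$- and a.s.-convergence from Lemma~\ref{lem:conditional-lemma} and the mutual orthogonality of the $\inpro[H_1]{X}{e_j}$.
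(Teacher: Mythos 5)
Your argument is correct, but there is nothing in the paper to compare it against: Theorem~\ref{thm:conditional-gaussian} is quoted verbatim from \cite[Theorem~2.4]{GM08} and no proof is reproduced here, the paper only supplying the auxiliary Lemma~\ref{lem:conditional-lemma} needed to interpret the statement. Your proof is a sound self-contained derivation and, as far as I can check, every step goes through: the block square root $C^{1/2}$ of $\Cov(X\oplus Y)$ with Hilbert--Schmidt corners gives $\Cov(X)=a^2+bb^*$ and $\Cov(X,Y)=ab+bc$; Douglas' range-inclusion theorem (which is essentially Proposition~\ref{app:inverse-bound} of the paper, so you could equally have invoked that) yields the factorization $\Cov(X,Y)=\Cov(X)^{1/2}D$ with $D=\Cov(X)^{-1/2}\Cov(X,Y)\in\cL_2(H_2,H_1)$ by the ideal property, proving (i); and the orthogonal decomposition $Y=W+(Y-W)$ with $W=D^*\Cov(X)^{-1/2}X$ built from Lemma~\ref{lem:conditional-lemma}, together with $\Cov(Y-W,X)=0$, joint Gaussianity of the limit, and the resulting independence, gives (ii) and (iii) exactly as you compute ($\Cov(W)=\Cov(W,Y)=D^*D$, hence conditional covariance $\Cov(Y)-D^*D$). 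This is the classical conditioning argument for Gaussian vectors in Hilbert space and is consistent in spirit with the machinery the paper imports from \cite{GM08}; the only points deserving explicit care, which you do address, are the $L^2$/a.s.\ convergence justifying the term-by-term covariance computations and the use of injectivity of $\Cov(X)$ to make $\Cov(X)^{-1/2}$ a genuine closed inverse and $(e_j)$ an orthonormal basis.
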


We conclude this section by introducing a (strongly) cylindrical Wiener process in a Hilbert space $H$. It is, following~\cite{DPZ14, R11}, defined as a linear transformation $H \ni v \mapsto W_v$ whose values are real-valued Wiener processes with respect to $(\Omega, \cA, (\cF_t)_{t \in [0,T]}, \IP)$ having the covariance structure $\E[W_u(s) W_v(t)] = \min(s,t) \inpro[H]{Qu}{v}$ for $u,v \in H$, $s,t \in [0,T]$ and some $Q \in \Sigma^+(H)$. The operator $Q$ is referred to as the covariance operator of $W$. It\^o integrals taking values in $H$ in the form of
\begin{equation}
	\label{eq:stoch-int}
	\int_{0}^{t} \Psi(s) \dd W(s),
\end{equation}
$t \in [0,T]$, are well-defined for deterministic processes $\Psi \in L^2([0,T],\cL_2(Q^{1/2}(H),H))$ \cite[Section~4.2.1]{DPZ14}. In particular,~\eqref{eq:stoch-int} defines an $H$-valued Gaussian random variable with mean $0$ and covariance 
\begin{equation}
	\label{eq:stoch-int-cov}
	\Cov\left(\int_{0}^{t} \Psi(s) \dd W(s) \right) = \int_{0}^{t} \Psi(s) \Psi^*(s) \dd s,
\end{equation}
\cite[Proposition~4.28]{DPZ14}. By~\eqref{eq:schatten-bound-2}, the integrand takes value in a separable Banach space so the integral is well-defined in the Bochner sense.

\subsection{Functional analytic framework}
\label{subsec:fa-framework}

In this section, we introduce the functional analytic framework of \cite[Appendix~B]{K14} that we consider in the remainder of the paper. We fix a real, separable Hilbert space~$(H, \inpro{\cdot}{\cdot}, \norm{\cdot})$ and let $A: \cD(A) \subset H \rightarrow H$ be a densely defined linear operator, which is self-adjoint and positive definite with a compact inverse. By the spectral theorem, we obtain a sequence $(\lambda_j)_{j=1}^\infty$ of positive non-decreasing eigenvalues of $A$ with $\lim_{j \to \infty} \lambda_j = \infty$, along with an orthonormal eigenbasis $(e_j)_{j=1}^\infty$ in $H$. We define fractional powers of $A$ by 
\begin{equation*}
	%\label{eq:fracpower}
	A^{\frac{r}{2}} v := \sum_{j = 1}^{\infty} \lambda_j^{\frac{r}{2}} \inpro{v}{e_j}.
\end{equation*}
For $r \le 0$, this defines an operator in $\Sigma^+(H)$. For $r > 0$, $A^{\frac{r}{2}}$ is a densely defined, positive definite, self-adjoint, and unbounded operator with domain 
\begin{equation*}
	\dom(A^{\frac{r}{2}}) = \left\{ v \in H : \sum^\infty_{j=1} \lambda_j^r |\inpro{v}{e_j}|^2 < \infty \right\}.
\end{equation*}
We write $\dot{H}^r := \dom(A^{r/2})$ with $\dot{H}^0 = H$. This is a Hilbert space with respect to the inner product 
\begin{equation}
	\label{eq:fracpower-norm}
	\inpro[\dot{H}^r]{u}{v} = \inpro{A^{r/2}u}{A^{r/2}v} = \sum_{j = 1}^{\infty} \lambda_j^{r} \inpro{u}{e_j} \inpro{v}{e_j}.
\end{equation}
For $r<0$, we define $\dot{H}^r$ to be the completion of $H$ under the norm defined by~\eqref{eq:fracpower-norm}. The operator $A^{r/2}$ extends to a bounded operator on $\dot{H}^r$ and we may write
\begin{equation*}
	\dot{H}^r = \dom(A^{\frac{r}{2}}) = \left\{x = \sum_{j=1}^{\infty} x_j e_j : (x_j)_{j=1}^\infty \subset \R \text{ such that } \norm[\dot{H}^r]{x}^2 = \sum_{j=1}^{\infty} \lambda_j^{r} x_j^2 < \infty \right\}.
\end{equation*} 
Regardless of the sign of $r$, $\dot{H}^r$ is a Hilbert space with inner product $\inpro[\dot{H}^r]{u}{v} = \inpro{A^{r/2}u}{A^{r/2}v}$. % Dual characterization? 
For $r > s$, the embedding $\dot{H}^r \hookrightarrow \dot{H}^s$ is dense and compact. Moreover, Lemma~2.1 in~\cite{BKK20} allows us to, for all $r, s \in \R$, extend $A^{s/2}$ to an isometric isomorphism from $\dot{H}^r$ to $\dot{H}^{r-s}$, and we do so without changing notation. This means in particular that $\dot{H}^{r-s} = A^{s/2}(\dot{H}^r)$. Note also, that since $A^{r-s}$ is bounded on $\dot{H}^s$ when $s \ge r$, we have
\begin{equation}
	\label{eq:A-as-adjoint-embedding}
	\inpro[\dot{H}^s]{I_{\dot{H}^s \hookrightarrow \dot{H}^r}^* u}{v} = \inpro[\dot{H}^r]{u}{v} = \inpro[\dot{H}^s]{A^{\frac{r-s}{2}}u}{A^{\frac{r-s}{2}}v} = \inpro[\dot{H}^s]{A^{r-s}u}{v}
\end{equation}
for all $u,v \in \dot{H}^s$, so that by density, $I_{\dot{H}^s \hookrightarrow \dot{H}^r}^* = A^{r-s}$ on $H^r$.

For an operator $\Gamma \in \cL(H)$ and all $r \ge 0$, $\Gamma \in \cL(H,\dot{H}^r)$ if and only if $\Gamma^* \in \cL(H)$ can be continuously extended to $\dot{H}^{-r}$, and $\norm[\cL(H,\dot{H}^r)]{\Gamma} = \norm[\cL(\dot{H}^{-r},H)]{\Gamma^*}$. Moreover, $\Gamma \in \cL_2(H,\dot{H}^r)$ if and only if $\Gamma^* \in \cL_2(\dot{H}^{-r},H)$, and $\norm[\cL_2(H,\dot{H}^r)]{\Gamma} = \norm[\cL_2(\dot{H}^{-r},H)]{\Gamma^*}$. We assume that there is some $\zeta >0$ such that $I_{\dot{H}^\zeta \hookrightarrow H} \in \cL_2(\dot{H}^\zeta,H)$. We remark that then, for arbitrary $s \in \R$, 
\begin{equation}
	\label{eq:hs-invariance}
	\norm[\cL_2(\dot{H}^{s + \zeta}, \dot{H}^s)]{I_{\dot{H}^{s + \zeta} \hookrightarrow \dot{H}^s}}^2 = \sum_{j = 1}^{\infty} \norm[\dot{H}^s]{A^{-\frac{s + \zeta}{2}} e_j }^2 = \sum_{j = 1}^{\infty} \norm{A^{-\frac{\zeta}{2}}e_j}^2 = \norm[\cL_2(\dot{H}^{\zeta}, H)]{I_{\dot{H}^{\zeta} \hookrightarrow H}}^2 < \infty.
\end{equation}

The operator $-A$ is the generator of an analytic $C_0$-semigroup $S:=(S(t))_{t \ge 0}$ of bounded linear operators on~$H$ that extends to $\dot{H}^r$ for arbitrary $r<0$ and is a $C_0$-semigroup also on~$\dot{H}^r$, $r \in \R$. It has the spectral representation
\begin{equation*}
	S(t) v = \sum_{j=1}^\infty e^{-\lambda_j t} \inpro{v}{e_j} e_j
\end{equation*}
for $v \in H$, and we see that $S(t)(H) \subset \dot{H}^r$ for all $r \in \R$ and $t > 0$. In fact, for $r \ge 0$, there exists a constant $C < \infty$ such that for all $t > 0$ and $u \in H$
\begin{equation}
	\label{eq:semigroup-bound}
	\norm[\dot{H}^r]{S(t)u} = \norm{A^{\frac{r}{2}} S(t)u} \le C t^{-r/2} \norm{u}.
\end{equation}
Note that $S(t)$ commutes with $A^{r/2}$, for all $t \in [0,T]$ and $r \in \R$.  
We end this section with the introduction of the canonical example of $A$ as an elliptic operator.
\begin{example}
	\label{ex:elliptic-operator}
	Let $H = L^2(\cD)$, the space of square-integrable functions on a bounded domain $\cD \subset \R^d$, $d = 1,2,3$, which is either convex or has boundary $\partial \cD$ of class $\cC^2$. The differential operator
	\begin{equation*}
		\cA = -\sum^{d}_{i,j=1} \frac{\partial}{\partial \xi_i} a_{i,j}  \frac{\partial}{\partial \xi_j} + a_0
	\end{equation*}
	is equipped with homogeneous boundary conditions, of either Dirichlet or Neumann type. The coefficients $a_{i,j}$, $i,j = 1, \ldots, d$, are $\cC^1(\cD)$ functions  fulfilling $a_{i,j} = a_{j,i}$. Moreover, we assume that there is a constant $\lambda_0 > 0$ such that for all $y \in \R^d$ and almost all $\xi \in \cD$, $\sum^{d}_{i,j=1} a_{i,j} (\xi) y_i y_j \ge \lambda_0 |y|^2$. The function $a_0 \in L^\infty(\cD)$ is non-negative almost everywhere on $\cD$. We denote by $H^m(\cD) = W^{m,2}$ the classical Sobolev space of order $m \in \N$. Following \cite[Chapters~1-2]{Y10}, we let $a$ be a continuous, symmetric bilinear form
	\begin{equation*}
		a(u,v) = \sum_{i,j = 1}^d \int_\cD a_{i,j} \frac{\partial u}{\partial \xi_i} \frac{\partial v}{\partial \xi_j} \dd \xi + \int_\cD a_0 u v \dd \xi,
	\end{equation*}
	on  $V \subset H^1(\cD) \subset L^2(\cD)=H$. In the case of Dirichlet boundary conditions we take $V = H^1_0(\cD) = \{v \in H^1 : \gamma v = 0 \}$, where $\gamma$ is the trace operator. In the case of Neumann boundary conditions, we take $V = H^1$, and assume in addition that there is a constant $c_0 > 0$ such that $a_0 \ge c_0$ a.e.\ on $\cD$. Then $a$ is coercive and, with $V \subset H \subset V^*$ being a Gelfand triple, there exists a unique isomorphism $L \colon V \to V^*$ such that ${}_{V^*} \langle L u, v \rangle_V = a(u,v)$ for all $u, v \in V$. Viewing $L$ as an operator on $V^*$, it is densely defined and closed. We let $A = L|_{H}$ be its restriction to $H$ with domain $D(A) = \{v \in V : A v = L v \in H\}$. Then, $A$ is densely defined and positive definite. Since the embedding $V \hookrightarrow H$ is compact, its inverse is compact.
	
	We can relate the spaces $(\dot{H}^{s})_{s \in [0,2]}$ to the fractional Sobolev spaces $(H^{s})_{s \in [0,2]}$. In the case of Dirichlet boundary conditions, we have
	\begin{equation}
		\label{eq:sobolev_id_1}
		\dot{H}^s = 
		\begin{cases}
			H^s & \text{ if } s \in [0,1/2), \\
			\left\{u \in H^s : \gamma u  = 0 \right\} & \text{ if } s \in (1/2,3/2) \cup (3/2,2],
		\end{cases}	
	\end{equation}
	with norm equivalence. In the case of Neumann boundary conditions, 
	\begin{equation}
		\label{eq:sobolev_id_2}
		\dot{H}^s = 
		\begin{cases}
			H^s & \text{ if } s \in [0, 3/2), \\
			\left\{u \in H^s : {\partial u}/{\partial \nu_\Lambda} = 0 \right\} &\text{ if } s \in (3/2,2],
		\end{cases}	
	\end{equation}
	where $\partial v / \partial \nu_\Lambda = \sum_{i,j = 1}^d n_i a_{i,j} \gamma D^{j} v$,
	with $(n_1, \ldots, n_d)$ being the outward unit normal to $\partial \cD$. If $\partial \cD$ is of class $\cC^2$, then~\eqref{eq:sobolev_id_1} holds for $s=3/2$ and for certain special domains, $\dot{H}^{3/2}$ in~\eqref{eq:sobolev_id_2} can be characterized as those functions in $H^{3/2}$ that satisfy the boundary condition in a weak sense. For these facts and further details on domains of powers of elliptic operators, we refer to~\cite[Sections~16.4-6]{Y10}. Since the embedding $I_{H^s \hookrightarrow H} \in \cL_2(H^s,H)$ if and only if $s > d/2$ (see \cite[p. 286]{T67}, \cite[Theorem~3.3.4(ii)]{ET96} for the case that $\partial \cD \in \cC^\infty$ but note that the result holds also for our case, cf.\ \cite[Lemma~2.3]{KLP22}), the identities~\ref{eq:sobolev_id_1} and~\ref{eq:sobolev_id_2} imply that we may choose $\zeta > d/2$ in this setting.
\end{example}
	
\section{SPDE bridges with observation noise}
\label{sec:spde-bridges}

In this section, we introduce the linear SPDE that we work with throughout the paper, along with an assumption ensuring continuity of the mild solution~$X \colon \Omega \times [0,T] \to H$. We then define an SPDE bridge $X^{x,y}$, which is the process $X$ fulfilling $X(0) = x$ conditioned, in a certain sense, on $X^x(T) + Z = y$ for a cylindrical Gaussian random variable $Z$.

With $A$ fulfilling the assumptions of Section~\ref{subsec:fa-framework}, we consider SPDEs of the form 
\begin{equation}
	\label{eq:SPDE}
	\begin{split}
		\dd X^x(t) + A X^x(t) \dd t &= \dd W(s) \text{ for } t \in (0,T]\\
		X^x(0) & = x,
	\end{split}
\end{equation}
for $T <\infty$. Here, the initial value $x \in H$ is taken to be deterministic and $W$ is a cylindrical Wiener process in $H$ with covariance operator $Q \in \Sigma^+(H)$. A stochastic process $X^x \in \cC([0,T],L^2(\Omega,H))$ is said to be a mild solution of~\eqref{eq:SPDE} if 
\begin{equation*}
	%\label{eq:mild-solution}
	X^x(t) = S(t) x + \int^t_0 S(t-s) \dd W(s)
\end{equation*}
for all $t \in [0,T]$. Recall that $S$ is the semigroup generated  by $-A$. For this solution to exist, we need the following assumption.
\begin{assumption}
	\label{ass:Q}
	There is a parameter $\beta > 0$ such that 
	\begin{equation*}
		\norm[\cL_2(Q^{\frac{1}{2}}(H),\dot{H}^{\beta-1})]{I_{Q^{\frac{1}{2}}(H)\hookrightarrow\dot{H}^{\beta-1}}} = \norm[\cL_2(H)]{A^{\frac{\beta-1}{2}} Q^{\frac{1}{2}}} < \infty.
	\end{equation*}
\end{assumption}

\begin{example}
	\label{ex:Q-reg}
	Let us consider this assumption in the context of Example~\ref{ex:elliptic-operator}. For $Q = I$, the process $W$ is usually referred to as space-time white noise. We obtain from~\eqref{eq:hs-invariance} that Assumption~\ref{ass:Q} is fulfilled for all $\beta \le 1 - \zeta$, i.e., we must have $d = 1$, and we may choose $\beta < 1/2$. Another popular choice is to consider noise which is white in time and correlated, but spatially homogeneous, in $\cD$. In this case $Q$ is given by
	\begin{equation*}
		Q u (\xi) = \int_{\cD} q(\xi-\upsilon) u(\upsilon) \dd \upsilon.
	\end{equation*} 
	Here, $q \colon \R^d \to \R$ is a positive definite, symmetric, continuous and integrable function. If this function is sufficiently smooth, $W(1)$ is a Gaussian random field on $\cD$ and $q(\xi-\upsilon) = \Cov(W(1,\xi),W(1,\upsilon))$ for $\xi,\upsilon \in \cD$. If we assume that the Fourier transform $\hat{q}$ of $q$ fulfills $\hat{q}(\xi) \le C \left(1 + |\xi|^2 \right)^{-\sigma}$ for some $\sigma > d/2$ and all $\xi \in \R^d$, then Assumption~\ref{ass:Q} is fulfilled for all $\beta < 1 + \min(\sigma-d/2,1/2)$ in the case of Dirichlet boundary conditions and for all $\beta < 1 + \min(\sigma-d/2,3/2)$ in the case of Neumann boundary conditions \cite[Corollary~4.4]{KLP22}.
\end{example}

In light of~\eqref{eq:semigroup-bound}, it follows from Assumption~\ref{ass:Q} that for all $r < \beta$ and $\epsilon \in (0,\min(\beta-r,1))$,
\begin{equation*}
	\label{eq:Q:cont}
	\int^T_0 t^{-\epsilon} \norm[\cL_2(Q^{1/2}(H),\dot{H}^r)]{S(t)}^2 \dd t = \int^T_0 t^{-\epsilon} \norm[\cL_2(H)]{A^{\frac{r}{2}} S(t)Q^{1/2}}^2 \dd t  < \infty.
\end{equation*}
Using this bound, we obtain existence and uniqueness of the solution $X^x$ \cite[Chapters~5-6]{DPZ14}, as well the existence of a continuous modification of this process \cite[Theorem~5.11]{DPZ14}. This means that we may regard $X^x = X^x(\cdot)$ as a random variable in $\cC_T := \cC([0,T],H)$ as well as $H_T := L^2([0,T],H)$ \cite[Proposition~3.18]{DPZ14}. When $x\in \dot{H}^r$, the same statements hold with $H$ replaced by $\dot H^r$, for arbitrary $r \in [0,\beta)$. In particular,
\begin{equation*}
	%\label{eq:mild-solution-reg}
	\sup_{t \in [0,T]}
	\E \left[\norm[\dot{H}^r]{X^0(t)}^p\right]
	\le \E \left[\sup_{t \in [0,T]} \norm[\dot{H}^r]{X^0(t)}^p\right] < \infty
\end{equation*}
for all $t \in (0,T]$, $p \ge 1$. As a consequence of the construction of the stochastic integral, the law of $X^x(t)$ is Gaussian for each $t \in [0,T]$ \cite[Theorem~5.2]{DPZ14}. It is therefore determined by the mean of $X^x(t)$ and its covariance, which in light of~\eqref{eq:stoch-int-cov} and the fact that $S$ takes values in $\Sigma^+(H)$ is given by
\begin{equation*}
	%\label{eq:covXt}
	Q(t) := \Cov(X^x(t)) = \Cov(X^0(t)) = \int^t_0 S(t-s) Q S(t-s) \dd s = \int^t_0 S(s) Q S(s) \dd s.
\end{equation*}
As a consequence of $X^0(t)$, $t \in [0,T]$, being $\dot{H}^r$-valued, $Q(t)$ extends to $\dot{H}^{-r}$ with 
\begin{equation*}
	%\label{eq:covXt-reg}
	\sup_{t \in [0,T]} \norm[\cL_1(H)]{A^{\frac{r}{2}}Q(t)A^{\frac{r}{2}}} = \sup_{t \in [0,T]} \norm[\cL_1(\dot{H}^{-r},\dot{H}^{r})]{Q(t)} < \infty
\end{equation*}
for all $r < \beta$.
By \cite[Theorem~5.2]{DPZ14}, the law of $X^x$ in $H_T$ is also Gaussian with mean $S(\cdot) x$ and covariance $\bar Q \in \cL_1(H_T)$ given by 
\begin{equation}
	\label{eq:covX-HT}
	\bar Q u := \Cov(X^x) u = \Cov(X^0) u= \int^T_0 \left(\int^{\min(t,\cdot)}_0 S(\cdot - s) Q S(t - s) \dd s\right) u(t) \dd t
\end{equation}
for $u \in H_T$. Since $\cC_T$ is dense in $H_T$, $H_T = H_T^*$ is weak-* dense in $\cC_T^*$. Thus, there is a sequence $(u_j)_{j=1}^\infty \subset H_T^*$ such that $\dualp[\cC_T]{X}{u} = \lim_{j \to \infty} \inpro[H_T]{X}{u_j}$ $\IP$-a.s. Since the almost sure limit of Gaussian random variables is Gaussian, it follows that the law of $X^x$ is Gaussian in $\cC_T$, too. 
The pair $(X^x, X^x(T))$, with $X^x\in L^2(\Omega,H_T)$, $X^x(T)~\in L^2(\Omega,H)$, is jointly Gaussian \cite{GM08}. 	
This fact can, by~Theorem~\ref{thm:conditional-gaussian}, be used to define a bridge process by conditioning $X^x$ on the observation $X^x(T) = y$, in a certain sense. This is the approach taken in \cite{GM06} and \cite{GM08}, where the authors define $X^{x,y}(t)$, for an appropriate $y \in H$ and $t \in [0,T)$, by the formula
\begin{equation*}
	X^{x,y}(t) = X^x(t) - \left(\Cov(X^0(T))^{-\frac{1}{2}} \Cov(X^0(T),X^x(t))\right)^* \Cov(X^0(T))^{-\frac{1}{2}} (X^x(T) - y).
\end{equation*}
Theorem 2.14 in~\cite{GM08} provides a justification for this formula, see also Proposition~\ref{prop:bridge-conditional-property} below.
For this to be well-defined, $\Cov(X^0(T))$ has to be injective, which is the reason for why $Q$ is assumed to be injective in \cite{GM06,GM08}. Note that if $H$ is finite-dimensional, $\Cov(X^0(T))$ is invertible and the formula reduces (without any restrictions on $y$) to the familiar expression
\begin{equation*}
	X^{x,y}(t) = X^x(t) - \Cov(X^x(t),X^0(T))\Cov(X^0(T))^{-1} (X^x(T) - y).
\end{equation*}

In many applications of stochastic spatio-temporal processes, which $X^x$ is a model of, the process can not be observed exactly. This motivates our generalization of the results of~\cite{GM08}, where we condition on $X^x(T) + Z = y$, with $Z$ being a cylindrical Gaussian random variable modeling observation noise. Under the following assumption, $\Cov(X^0(T) + Z)$ becomes an injective operator without any extra assumption on $Q$, so that we may treat a larger class of SPDEs. Moreover, as is seen in the next sections, the addition of $Z$ allows us to derive convergence rates for a spatially semidiscrete approximation of $X^{x,y}$. 

\begin{assumption}
	\label{ass:Z}
	The observation noise $Z$ is a Gaussian cylindrical random variable in $H$, such that
	\begin{enumerate}[label=(\roman*)]
		\item its covariance is given by $\Cov(Z) =: \tilde Q$,
		\item it is independent of $W$, 
		\item there is some $\eta \in (-\infty,\beta)$ such that $Z$ can be regarded as a Gaussian random variable in $\dot{H}^{\eta}$ and 
		\item \label{ass:Z:alpha} the embedding $\dot{H}^{\alpha} \hookrightarrow \tilde Q^{1/2}(H)$ holds true for some $\alpha \ge \max(\eta,0)$.
	\end{enumerate}
\end{assumption}
Since the embedding $I_{H \hookrightarrow \dot{H}^{-\zeta}}$ is Hilbert--Schmidt, the third assumption at least holds for $\eta = -\zeta$. In the case that $\eta \ge 0$, the observation noise $Z$ is a Gaussian $H$-valued random variable and its covariance in $\dot{H}^{\eta}$ is by~\eqref{eq:covariance-in-subspace} and~\eqref{eq:A-as-adjoint-embedding} given by $\Cov(Z) (I^*_{\dot{H}^{\eta} \hookrightarrow H})^{-1} = \tilde Q A^{\eta} \in \cL_1(\dot{H}^{\eta})$. Again we make no difference in notation between $\tilde Q$ and its extension to $\dot{H}^{-\eta}$. In the case that $\eta \le 0$, the same expression for the covariance is obtained from~\eqref{eq:covariance-in-supspace}. 

We write $\mu_T$ for the Gaussian image measure of $X^x(T) + Z$ on $\dot{H}^{\eta}$ when $x = 0$. This sum is interpreted as an $\dot{H}^{\eta}$-valued random variable with covariance 
\begin{equation}
	\label{eq:observation-w-noise-cov}
	\Cov(X^0(T) + Z) = (Q(T) + \tilde Q) A^{\eta} \in \cL_1(\dot{H}^{\eta}) \cap \Sigma^+(\dot{H}^\eta).
\end{equation}
This expression follows from the independence of $Z$ and $X$, in turn a consequence of Assumption~\ref{ass:Z} and the construction of the It\^o integral. The reproducing kernel Hilbert space $\Cov(X^0(T)+Z)^{1/2}(\dot{H}^{\eta})$ is dense in $\dot{H}^{\eta}$. To see this, it suffices to show that $\dot{H}^{\alpha} \hookrightarrow ((Q(T) + \tilde Q) A^{\eta})^{1/2}(\dot{H}^{\eta})$. This in turn follows from Assumption~\ref{ass:Z}, Proposition~\ref{app:inverse-bound} and the fact that $A^{\frac{\eta-\alpha}{2}}(\dot{H}^{\eta}) = \dot{H}^{\alpha}$ (see Section~\ref{subsec:fa-framework}) since for all $v \in \dot{H}^{2\max(\eta,0)}$,
\begin{equation}
	\label{eq:sum-covariance-bounded-below}
	%\begin{split}
	\norm[\dot{H}^{\eta}]{((Q(T) + \tilde Q) A^{\eta})^{\frac{1}{2}}v}^2 = \inpro[\dot{H}^{\eta}]{Q(T) A^{\eta} v}{v} +  \inpro[\dot{H}^{\eta}]{\tilde Q A^{\eta}v}{v} \ge \norm{{\tilde Q}^{\frac{1}{2}}A^{\eta}v}^2 \gtrsim \norm[\dot{H}^{\eta}]{A^{\frac{\eta-\alpha}{2}}v}^2.
	%\end{split}
\end{equation}
In the first equality we used the fact that $((Q(T) + \tilde Q) A^{\eta})^{\frac{1}{2}}$ is symmetric on $\dot{H}^\eta$, a consequence of~\eqref{eq:observation-w-noise-cov}.
By density, this extends to $v \in \dot{H}^{\eta}$. This also shows injectivity of $\Cov(X^0(T)+Z)$.

Next, we introduce the operators that are used to define our bridge process $X^{x,y}$. First, we note that 
\begin{equation}
	\label{eq:crosscov-X-XT}
	\Cov(X,X^0(T)+Z) = Q(\cdot)S(T-\cdot) A^{\eta} \in \cL_1(\dot{H}^{\eta},H_T),
\end{equation}
where the operator $Q(\cdot)S(T-\cdot)$ is defined by $(Q(\cdot)S(T-\cdot) u) (t) = Q(t)S(T-t) u$ for $u \in H$, $t \in [0,T]$. In light of the definition of $\Cov(X,X^0(T)+Z)$ and the independence of $Z$ and $X$, \eqref{eq:crosscov-X-XT} follows (for $\eta \le 0$) from the fact that $\E\left[\inpro[\dot{H}^\eta]{X(T)}{u} \inpro[H_T]{X}{v}\right]$ is equal to
\begin{align*}
	&\E\left[\lrinpro[\dot{H}^\eta]{\int^T_0 S(T-r) \dd W(r)}{u} \left(\int^T_0 \lrinpro{\int^t_0 S(t-r) \dd W(r)}{v(t)} \dd t \right) \right] \\
	&\quad=\int^T_0 \E\left[ \lrinpro{\int^t_0 A^{\frac{\eta}{2}} S(T-r) \dd W(r)}{A^{\frac{\eta}{2}} u} \lrinpro{\int^t_0 S(t-r) \dd W(r)}{v(t)} \right] \dd t \\
	&\quad= \int^T_0 \lrinpro{ \left(\int^t_0 S(t-r) (A^{\frac{\eta}{2}} S(T-r) )^* \dd r\right) A^{\frac{\eta}{2}} u}{v(t)} \dd t \\
	&\quad= \int^T_0 \inpro{Q(t) S(T-t) A^\eta u}{v(t)} \dd t = \inpro[H_T]{Q(\cdot)S(T-\cdot) A^{\eta} u }{v}
\end{align*} 
for arbitrary $u \in \dot{H}^\eta$ and $v \in H_T$. Here, the second inequality follows from the independent increment property of $W$, the third from \eqref{eq:stoch-int-cov} and the polarization identity, and the fourth from the semigroup property of $S$. For $\eta > 0$ a density argument can be used. Similarly, $\Cov(X(t),X^0(T)+Z) = Q(t)S(T-t)A^{\eta}$. For $t \in [0,T]$, we write
\begin{align*}
	K(t) :=& \left(\Cov(X^0(T)+Z)^{-\frac{1}{2}} \Cov(X^0(T)+Z,X(t))\right)^* 
	\\ =& \left( ((Q(T) + \tilde Q) A^{\eta})^{-\frac{1}{2}} S(T-t)Q(t) \right)^* \in \cL_2(\dot{H}^{\eta},H).
\end{align*}
We write $\cK \colon \dot{H}^{\eta} \to \cC_T \subset H_T$ for the operator in $\cL_2(\dot{H}^{\eta},H_T)$ defined by
\begin{equation}
	\label{eq:kappa}
	\cK := \left(\Cov(X^0(T)+Z)^{-\frac{1}{2}} \Cov(X^0(T)+Z,X)\right)^*.
\end{equation}
These operators are well-defined as a consequence of Theorem~\ref{thm:conditional-gaussian}. Note that for $u \in \Cov(X^0(T)+Z)^{\frac{1}{2}}(\dot{H}^{\eta})$,
\begin{equation}
	\label{eq:K-Kt}
	\begin{split}
		\inpro[H_T]{\cK u}{v} &= \inpro[H_T]{Q(\cdot)S(T-\cdot) A^{\eta} ((Q(T) + \tilde Q) A^{\eta})^{-\frac{1}{2}} u}{v} \\
		&= \int^T_0 \inpro{Q(t)S(T-t) A^{\eta} ((Q(T) + \tilde Q) A^{\eta})^{-\frac{1}{2}} u}{v(t)} = \int^T_0 \inpro{K(t) u}{v(t)} \dd t
	\end{split}
\end{equation}
for all $v \in H_T$. By density, this holds for all $u \in \dot{H}^{\eta}$ so that $(\cK u)(t) = K(t) u$ for almost every $t \in [0,T]$. This extends to all $t \in [0,T]$ since $\cK$ maps into $\cC_T$. In fact, the stronger claim that $\cK$ is $\gamma$-radonifying holds true, which we formulate as a separate lemma. This technical result allows us to show that the process $X^{x,y}$ is well-defined both as an element of $H_T$ and as a continuous process with values in $H$. The proof is similar to that of~\cite[Lemma~3.2]{GM06}, but the inclusion of the cylindrical random variable $Z$ warrants a separate treatment.

\begin{lemma}
	\label{lem:K-is-radonifying}
	Under Assumptions~\ref{ass:Q} and~\ref{ass:Z}, the operator $\cK \colon \dot{H}^{\eta} \to \cC_T$ is $\gamma$-radonifying.
\end{lemma}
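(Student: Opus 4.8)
The plan is to show that $\cK$ factors as $\cK = L \circ J$ where $J \colon \dot H^\eta \to \mathcal{H}$ is $\gamma$-radonifying (or Hilbert--Schmidt) into some intermediate Hilbert space and $L \colon \mathcal{H} \to \cC_T$ is bounded, and then invoke the operator ideal property of $\gamma(\cdot,\cdot)$ (\cite[Theorem~6.2]{VN10}) together with the identification $\gamma(H_1,H_2) = \cL_2(H_1,H_2)$ for Hilbert spaces. Concretely, recall from~\eqref{eq:K-Kt} that $(\cK u)(t) = K(t)u = Q(t)S(T-t)A^\eta((Q(T)+\tilde Q)A^\eta)^{-1/2} u$. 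Pick a regularity exponent $r$ with $0 \le r < \beta$ (to be optimized; something like $r$ slightly less than $\min(\beta, \alpha + \text{something})$) and write
\begin{equation*}
	K(t) = \big(A^{-r/2} Q(t) S(T-t) A^{\eta}\big) \circ A^{r/2}\big((Q(T)+\tilde Q)A^\eta\big)^{-1/2}.
\end{equation*}
The first step is to prove that the operator $u \mapsto A^{-r/2} Q(\cdot) S(T-\cdot) A^\eta u$ maps $H$ boundedly into $\cC_T$; this uses the uniform-in-$t$ bound $\sup_t \norm[\cL_1(\dot H^{-r},\dot H^r)]{Q(t)} < \infty$ recorded just before~\eqref{eq:covX-HT}, the smoothing estimate~\eqref{eq:semigroup-bound} for $S(T-t)$ applied in the direction $\dot H^\eta \to \dot H^r$ (so that $\norm[\cL(\dot H^\eta, \dot H^r)]{A^{-r/2}Q(t)S(T-t)A^\eta} \lesssim \norm[\cL_1(\dot H^{-r},\dot H^r)]{Q(t)}\,(T-t)^{-(r+\max(\eta,0)-\eta)/2}$ or similar), and a continuity-in-$t$ argument for the semigroup and $t \mapsto Q(t)$ to land in $\cC_T$ rather than merely $H_T$; the blow-up as $t \to T$ is harmless since we only need boundedness into $H$ at each fixed $t$ combined with the $\gamma$-radonifying factor carrying the integrability.

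The second, and genuinely delicate, step is to show that the "middle" operator $A^{r/2}\big((Q(T)+\tilde Q)A^\eta\big)^{-1/2}$ is Hilbert--Schmidt from $\dot H^\eta$ into $H$ (equivalently $\gamma$-radonifying there). Here one uses the lower bound~\eqref{eq:sum-covariance-bounded-below}, namely $\norm[\dot H^\eta]{((Q(T)+\tilde Q)A^\eta)^{1/2}v} \gtrsim \norm[\dot H^\eta]{A^{(\eta-\alpha)/2}v} = \norm[\dot H^\alpha]{v}$, which by the standard operator-range/pseudo-inverse comparison (as in~\cite{PR07} Appendix~C, or Proposition~\ref{app:inverse-bound} referenced in the text) gives $\norm[\cL(\dot H^\alpha,\dot H^\eta)]{((Q(T)+\tilde Q)A^\eta)^{-1/2}} < \infty$, i.e.\ $((Q(T)+\tilde Q)A^\eta)^{-1/2}$ extends to a bounded map $\dot H^\alpha \to \dot H^\eta$. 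Composing, $A^{r/2}((Q(T)+\tilde Q)A^\eta)^{-1/2}$ is bounded from $\dot H^{\alpha}$ into $\dot H^{\eta - r}$. To get Hilbert--Schmidt-ness from $\dot H^\eta$ into $H$ we insert the Hilbert--Schmidt embedding $\dot H^{\zeta} \hookrightarrow H$ (Assumption in Section~\ref{subsec:fa-framework}) and its scale-shifted versions~\eqref{eq:hs-invariance}: choosing $r$ so that $\eta - (\text{operator shift})$ beats $\eta + \zeta$ — more precisely arranging that the composition gains at least $\zeta$ derivatives — yields the Hilbert--Schmidt bound. One must check the arithmetic of exponents closes: roughly we need $r \le \beta$ (for the first step, via $\int_0^T (T-t)^{-r/2}\,\dots\,\dd t$-type finiteness actually we need a bit more care since it's a sup not an integral, but $Q(t)$ itself vanishes at $t=0$ and is controlled) and simultaneously $r$ large enough that $\alpha - $ (deficit) $\ge \zeta$; this is exactly where Assumption~\ref{ass:Z}\ref{ass:Z:alpha} tying $\alpha$ to $\eta$ is used, and it should go through because $\beta > \eta$ leaves room.

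I would then assemble the pieces: $\cK = L \circ J$ with $J := A^{r/2}((Q(T)+\tilde Q)A^\eta)^{-1/2} \in \cL_2(\dot H^\eta, H) = \gamma(\dot H^\eta, H)$ and $L := A^{-r/2}Q(\cdot)S(T-\cdot)A^\eta \in \cL(H,\cC_T)$, so by the ideal property $\cK \in \gamma(\dot H^\eta, \cC_T)$, with the explicit bound $\norm[\gamma(\dot H^\eta,\cC_T)]{\cK} \le \norm[\cL(H,\cC_T)]{L}\,\norm[\cL_2(\dot H^\eta,H)]{J}$. The main obstacle is the second step — verifying that the singular operator $((Q(T)+\tilde Q)A^\eta)^{-1/2}$, after the $A^{r/2}$ correction, is not merely bounded but Hilbert--Schmidt into the right space, which requires carefully balancing the gain of regularity from the inverse (controlled only in the $\dot H^\alpha$-norm via~\eqref{eq:sum-covariance-bounded-below}, \emph{not} in a stronger norm, since $Q(T)$ contributes nothing to the lower bound) against the loss from $A^{r/2}$ and the amount of smoothing $\zeta$ needed for Hilbert--Schmidtness; I expect the argument to mirror~\cite[Lemma~3.2]{GM06} but with the extra bookkeeping that $\tilde Q$ only controls the $\dot H^\alpha$-direction and one has to make sure $\alpha$, $\eta$, $\beta$, $\zeta$ and the chosen $r$ are mutually compatible, which the hypotheses $\eta < \beta$ and $\alpha \ge \max(\eta,0)$ are designed to guarantee. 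Splitting into the cases $\eta \le 0$ and $\eta > 0$ (as the text does elsewhere) may be needed to handle the interpretation of $A^\eta$ and the domains cleanly.
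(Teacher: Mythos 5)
Your second step is the problem, and under the paper's standing assumptions it cannot be repaired. The only quantitative control on $((Q(T)+\tilde Q)A^{\eta})^{-1/2}$ that Assumptions~\ref{ass:Q} and~\ref{ass:Z} provide is the lower bound~\eqref{eq:sum-covariance-bounded-below}, which gives boundedness $\dot{H}^{\alpha}\to\dot{H}^{\eta}$ and, by the duality argument the paper uses later in~\eqref{eq:lem:abstract-error-lem-1:inverse-bound-2}, boundedness $\dot{H}^{\eta}\to\dot{H}^{2\eta-\alpha}$ --- nothing stronger. So for $J=A^{r/2}((Q(T)+\tilde Q)A^{\eta})^{-1/2}$ to be Hilbert--Schmidt from $\dot{H}^{\eta}$ into $H$ via the embedding $\dot{H}^{\zeta}\hookrightarrow H$ you are forced to take $r\le 2\eta-\alpha-\zeta$, i.e.\ the other factor must carry a differentiation of order at least $\alpha+\zeta-2\eta$. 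But your $L$ must be bounded from $H$ into $\cC_T$, which is a \emph{uniform-in-$t$} operator bound: your remark that the blow-up as $t\to T$ is harmless is incorrect, since $J$ is $t$-independent and cannot absorb any $t$-singularity, and at $t=T$ the semigroup supplies no smoothing, so $Q(T)$ alone must provide the missing $\alpha+\zeta-2\eta$ orders relative to $A^{\eta}$. That amounts to an extra condition roughly of the form $\alpha+\zeta\lesssim\beta$, which is not implied by the assumptions and fails in the paper's central case $\tilde Q=\epsilon I$ (so $\alpha=0$, $\eta=-\zeta$) with space-time white noise, where $\beta<1/2<\zeta$. (As a smaller point, the displayed factorization does not even reproduce $K(t)$, because $A^{-r/2}$ does not commute with $Q(t)$; the cancelling power would have to be inserted as $Q(t)S(T-t)A^{\eta}A^{-r/2}$, which changes, but does not remove, the exponent constraint.) So the arithmetic does not close and the lemma cannot be obtained along this route in the stated generality.

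The paper proves the lemma with a different factorization in which no Hilbert--Schmidt property of the singular factor is needed: $\cK=\cJ^1\cJ^0$, where $\cJ^0v=Q^{1/2}S(T-\cdot)A^{\eta}((Q(T)+\tilde Q)A^{\eta})^{-1/2}v$ is shown to be merely a \emph{contraction} $\dot{H}^{\eta}\to H_T$ through the exact identity $\norm[\dot{H}^{\eta}]{(Q(T)A^{\eta})^{1/2}v}^2=\int_0^T\norm{Q^{1/2}S(T-t)A^{\eta}v}^2\dd t$ --- that is, exactly the $Q(T)$-part of the covariance, which your plan dismisses as contributing nothing, is what tames the inverse --- while the $\gamma$-radonification is extracted from $\cJ^1v=\int_0^{\cdot}S(\cdot-s)Q^{1/2}v(s)\dd s$: its range is $\bar Q^{1/2}(H_T)$, the reproducing kernel Hilbert space of the $\cC_T$-valued Gaussian variable $X^0$, whose embedding into $\cC_T$ is $\gamma$-radonifying by \cite[Proposition~8.6]{VN10}, and the ideal property concludes. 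If you want to keep your outline, the fix is to move the source of radonification from the spatial Sobolev embedding to this RKHS/stochastic-convolution argument.
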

\begin{proof}
	First we note that for any $v \in \dot{H}^{\eta}$
	\begin{equation*}
		\norm[\dot{H}^{\eta}]{((Q(T) + \tilde Q) A^{\eta})^{\frac{1}{2}} v}^2 = \norm[\dot{H}^{\eta}]{(Q(T)A^{\eta})^{\frac{1}{2}} v}^2 + \norm[\dot{H}^{\eta}]{(\tilde QA^{\eta})^{\frac{1}{2}} v}^2. 
	\end{equation*}
	Moreover, for $v \in \dot{H}^{2\max(\eta,0)}$
	\begin{equation*}
		\norm[\dot{H}^{\eta}]{(Q(T)A^{\eta})^{\frac{1}{2}} v}^2 = \inpro{Q(T)A^{\eta}v}{A^{\eta}v} = \int^T_0 \norm{Q^{\frac{1}{2}} S(T-t)A^{\eta} v}^2 \dd t. 
	\end{equation*}
	By Assumption~\ref{ass:Q} and the density of $\dot{H}^{2\max(\eta,0)} \hookrightarrow \dot{H}^{\eta}$, this identity extends to $v \in \dot{H}^{\eta}$. Hence, for $u \in \Cov(X^0(T)+Z)^{\frac{1}{2}}(\dot{H}^{\eta}) = ((Q(T) + \tilde Q) A^{\eta})^{1/2}(\dot{H}^{\eta})$ and $v = ((Q(T) + \tilde Q) A^{\eta})^{-1/2} u \in \dot{H}^{\eta}$, we obtain from the previous two identities that 
	\begin{equation*}
		\int^T_0 \norm{Q^{\frac{1}{2}} S(T-t)A^{\eta} ((Q(T) + \tilde Q) A^{\eta})^{-1/2} u}^2 \dd t \le \norm[\dot{H}^{\eta}]{u}^2.
	\end{equation*}
	This inequality is true also for $u \in \dot{H}^{\eta}$, by density of $((Q(T) + \tilde Q) A^{\eta})^{1/2}(\dot{H}^{\eta}) \hookrightarrow \dot{H}^{\eta}$.
	
	Summing up, what we have shown above is that the operator $\cJ^0$ given by
	\begin{equation*}
		(\cJ^0 v) (t) := Q^{\frac{1}{2}} S(T-t)A^{\eta} ((Q(T) + \tilde Q) A^{\eta})^{-1/2} v
	\end{equation*}
	for $v \in ((Q(T) + \tilde Q) A^{\eta})^{1/2}(\dot{H}^{\eta})$ and almost every $t \in [0,T]$ is a well-defined contraction and extends to $\cL(\dot{H}^{\eta},H_T)$. The operator $\cK: \dot{H}^{\eta} \to H_T$ can be factored into $\cK = \cJ^1 \cJ^0$ where $\cJ^1 \colon H_T \to \cC_T \hookrightarrow H_T$ is given by
	\begin{equation*}
		(\cJ^1 v)(t) := \int^t_0 S(t-s) Q^{\frac{1}{2}} v(s) \dd s
	\end{equation*}
	for $v \in H_T, t \in [0,T]$. To see this, let $v \in \dot{H}^{\eta}$ be such that $u = ((Q(T) + \tilde Q) A^{\eta})^{1/2} v$ and note that
	\begin{align*}
		(\cJ^1 \cJ^0 u)(t) = \int^t_0 S(t-s) Q^{\frac{1}{2}} (\cJ^0 u)(s) \dd s
		&= \int^t_0 S(t-s) Q S(T-s)A^{\eta} v \dd s \\ &= \int^t_0 S(t-s) Q S(t-s) \dd s \, S(T-t)A^{\eta} v \\
		&= Q(t)S(T-t) A^{\eta} ((Q(T) + \tilde Q) A^{\eta})^{-\frac{1}{2}} u = K(t) u.
	\end{align*}
	Thus, in light of~\eqref{eq:K-Kt}, $\cK u = \cJ_1 \cJ_0 u$ for $u \in ((Q(T) + \tilde Q) A^{\eta})^{1/2}(\dot{H}^{\eta})$ so by density, $\cK = \cJ^1 \cJ^0$.
	
	By~\cite[Corollary~B.5]{DPZ14} and~\eqref{eq:covX-HT}, $\cJ^1(H_T) = {\bar Q}^{1/2}(H_T)$, the reproducing kernel Hilbert space of $X^x$ on $H_T$. By~\cite[Propositions~1.7, 2.10]{DPZ14}, this is also the reproducing kernel Hilbert space of $X^x$ on $\cC_T$, whence $I_{{\bar Q}^{1/2}(H_T) \hookrightarrow \cC_T}$ is $\gamma$-radonifying \cite[Proposition~8.6]{VN10}. Since $\cJ^1 \in \cL(H_T,{\bar Q}^{1/2}(H_T))$, $\cK = \cJ^1 \cJ^0 = I_{{\bar Q}^{1/2}(H_T) \hookrightarrow \cC_T} \cJ^1 \cJ^0$ is $\gamma$-radonifying by the ideal property of these operators \cite[Theorem~6.2]{VN10}.
\end{proof}

We are now ready to introduce the SPDE bridge $X^{x,y}$, the approximation of which is the main topic of this paper. This we do in a separate proposition, in which we show that this process has a continuous modification and therefore can be seen as a $\cC_T$-valued random variable.

\begin{proposition}
	\label{prop:bridge-existence}
	Under Assumption~\ref{ass:Q} and~\ref{ass:Z}, there exists a Borel subspace $\cM \subset \dot{H}^{\eta}$ such that $\mu_T(\cM) = 1$ and for all $x \in H$, $y \in \cM$, the $H$-valued Gaussian process 
	\begin{equation}
		\label{eq:prop:bridge-existence:existence-1}
		X^{x,y}(t) := X^x(t) - K(t) ((Q(T) + \tilde Q) A^{\eta})^{-\frac{1}{2}} (X^x(T) + Z - y),
	\end{equation}
	is well-defined for all $t \in [0,T]$ and has a continuous modification. Moreover, the representation
	\begin{equation}
		\label{eq:prop:bridge-existence:existence-2}
		X^{x,y}(t) = X^x(t) + K(t) ((Q(T) + \tilde Q) A^{\eta})^{-\frac{1}{2}} (y - S(T)x) - \hat{X}^0(t)
	\end{equation}
	holds true for all $t \in [0,T]$, $\IP$-a.s. Here 
	\begin{equation}
		\label{eq:prop:bridge-existence:existence-3}
		\hat{X}^0(t) := \E[X^0(t) | X^0(T) + Z] = K(t) ((Q(T) + \tilde Q) A^{\eta})^{-\frac{1}{2}} (X^0(T) + Z).
	\end{equation}
	for all $t \in [0,T]$.
\end{proposition}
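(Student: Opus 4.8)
\emph{Strategy.} The plan is to read off~\eqref{eq:prop:bridge-existence:existence-1}--\eqref{eq:prop:bridge-existence:existence-3} by applying Lemma~\ref{lem:conditional-lemma}, Lemma~\ref{lem:K-is-radonifying} and Theorem~\ref{thm:conditional-gaussian} to the centered, jointly Gaussian pair formed by $X^0$ (viewed as a random variable in $H_T$ and in $\cC_T$) and $X^0(T)+Z$ (viewed as a random variable in $\dot{H}^{\eta}$). First I would fix the objects: by~\eqref{eq:observation-w-noise-cov}, $X^0(T)+Z$ is a centered Gaussian random variable in $\dot{H}^{\eta}$ with injective covariance $(Q(T)+\tilde Q)A^{\eta}$ and image measure $\mu_T$. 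Applying Lemma~\ref{lem:conditional-lemma} to this random variable and to $\cK \in \cL_2(\dot{H}^{\eta},H_T)$ produces a Borel subspace $\cM \subset \dot{H}^{\eta}$ with $\mu_T(\cM)=1$ on which $\cK((Q(T)+\tilde Q)A^{\eta})^{-1/2}$ is defined and linear, with $\cK((Q(T)+\tilde Q)A^{\eta})^{-1/2}((Q(T)+\tilde Q)A^{\eta})^{1/2}=\cK$ on $\dot{H}^{\eta}$. Since by Lemma~\ref{lem:K-is-radonifying} the operator $\cK$ is in fact $\gamma$-radonifying from $\dot{H}^{\eta}$ into $\cC_T$, I would argue (this is the delicate point, see below) that $\cK((Q(T)+\tilde Q)A^{\eta})^{-1/2}$ takes values in $\cC_T$ on $\cM$ and that, composed with $X^0(T)+Z$, it yields a $\cC_T$-valued Gaussian random variable; evaluating at $t$ and using the pointwise identity $(\cK u)(t)=K(t)u$ recorded after~\eqref{eq:K-Kt} then shows that $\hat{X}^0$ defined by~\eqref{eq:prop:bridge-existence:existence-3} is a $\cC_T$-valued Gaussian random variable. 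I would also record that $S(T)x\in\cM$ for every $x\in H$: the observation following Lemma~\ref{lem:conditional-lemma} gives $((Q(T)+\tilde Q)A^{\eta})^{1/2}(\dot{H}^{\eta})\subset\cM$, estimate~\eqref{eq:sum-covariance-bounded-below} gives $\dot{H}^{\alpha}\subset((Q(T)+\tilde Q)A^{\eta})^{1/2}(\dot{H}^{\eta})$, and $S(T)x\in\dot{H}^{r}$ for every $r\in\R$ by the smoothing bound~\eqref{eq:semigroup-bound}.

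Granting this, well-definedness of~\eqref{eq:prop:bridge-existence:existence-1} and the representation~\eqref{eq:prop:bridge-existence:existence-2} follow quickly. For $x\in H$ and $y\in\cM$, write $X^x(T)+Z-y=(X^0(T)+Z)-(y-S(T)x)$; since $X^0(T)+Z$ has law $\mu_T$ it lies in $\cM$ $\IP$-a.s., and $y-S(T)x\in\cM$ because $\cM$ is a linear subspace, so the argument lies in $\cM$ $\IP$-a.s. Hence $K(t)((Q(T)+\tilde Q)A^{\eta})^{-1/2}$ may be applied to it for every $t\in[0,T]$, which makes $X^{x,y}(t)$ in~\eqref{eq:prop:bridge-existence:existence-1} a well-defined $H$-valued random variable, Gaussian as an affine image of the jointly Gaussian pair $(X^x(t),X^0(T)+Z)$. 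Splitting the argument by linearity of $K(t)((Q(T)+\tilde Q)A^{\eta})^{-1/2}$ on $\cM$ and using the definition of $\hat{X}^0(t)$ yields~\eqref{eq:prop:bridge-existence:existence-2} for each fixed $t$, $\IP$-a.s. Since $X^x(\cdot)\in\cC_T$ $\IP$-a.s., $\hat{X}^0(\cdot)\in\cC_T$ $\IP$-a.s. by the previous paragraph, and $t\mapsto K(t)((Q(T)+\tilde Q)A^{\eta})^{-1/2}(y-S(T)x)$ is continuous because $y-S(T)x\in\dot{H}^{\alpha}\subset((Q(T)+\tilde Q)A^{\eta})^{1/2}(\dot{H}^{\eta})$ and $\cK$ maps $\dot{H}^{\eta}$ continuously into $\cC_T$, the right-hand side of~\eqref{eq:prop:bridge-existence:existence-2} defines a $\cC_T$-valued random variable; as it agrees with~\eqref{eq:prop:bridge-existence:existence-1} for each fixed $t$, $\IP$-a.s., it is the sought continuous modification, and~\eqref{eq:prop:bridge-existence:existence-2} then holds for all $t\in[0,T]$ simultaneously, $\IP$-a.s.

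It remains to identify $\hat{X}^0(t)$ with the conditional expectation in~\eqref{eq:prop:bridge-existence:existence-3}. This is Theorem~\ref{thm:conditional-gaussian}(ii) applied to $Y=X^0(t)$ and $X=X^0(T)+Z$: both are centered, $\Cov(X^0(T)+Z)$ is injective, and by~\eqref{eq:crosscov-X-XT} together with the definition of $K(t)$ one has $(\Cov(X^0(T)+Z)^{-1/2}\Cov(X^0(T)+Z,X^0(t)))^{*}=K(t)$, so part (ii) reads $\E[X^0(t)\mid X^0(T)+Z]=K(t)((Q(T)+\tilde Q)A^{\eta})^{-1/2}(X^0(T)+Z)$, which is precisely~\eqref{eq:prop:bridge-existence:existence-3}.

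\emph{Main obstacle.} The step I expect to require the most care is the $\gamma$-radonifying strengthening of Lemma~\ref{lem:conditional-lemma}: that lemma is phrased for a Hilbert-space target, whereas here one needs $\cK((Q(T)+\tilde Q)A^{\eta})^{-1/2}$ to take values in $\cC_T$ on the full-measure set $\cM$ and $\hat{X}^0$ to be a genuine $\cC_T$-valued Gaussian random variable (so that the modification is continuous and, via~\eqref{eq:gamma-rad-expectation-property}, all moments are finite). The way I would handle this is to note that, with $(\mu_j,g_j)_{j=1}^{\infty}$ the eigenpairs of $(Q(T)+\tilde Q)A^{\eta}$ (an orthonormal basis of $\dot{H}^{\eta}$) and $\xi_j:=\mu_j^{-1/2}\inpro[\dot{H}^{\eta}]{X^0(T)+Z}{g_j}$ an i.i.d.\ standard Gaussian sequence, the series $\sum_j \xi_j\,\cK g_j$ converges in $L^2(\Omega,\cC_T)$ exactly because $\cK\in\gamma(\dot{H}^{\eta},\cC_T)$ by the characterization~\eqref{eq:gamma-rad-norm}, and its limit coincides $\IP$-a.s.\ with the $\mu_T$-a.e.\ value supplied by Lemma~\ref{lem:conditional-lemma}. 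The only other care needed is the routine upgrade from ``for each fixed $t$, $\IP$-a.s.'' to ``$\IP$-a.s., for all $t$'' through continuity in $t$, which is already built into the argument above; everything else is a direct application of the quoted results.
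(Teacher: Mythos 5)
Your route is essentially the paper's (Lemma~\ref{lem:conditional-lemma} for the exceptional set, Lemma~\ref{lem:K-is-radonifying} for continuity, Theorem~\ref{thm:conditional-gaussian}(ii) for~\eqref{eq:prop:bridge-existence:existence-3}, and $S(T)x \in \dot{H}^{\alpha} \hookrightarrow ((Q(T)+\tilde Q)A^{\eta})^{1/2}(\dot{H}^{\eta})$ for the deterministic shift), but there is a genuine gap in how you treat a fixed $y \in \cM$. You take $\cM$ to be the set produced by Lemma~\ref{lem:conditional-lemma} with Hilbert target $H_T$; on that set $\cK((Q(T)+\tilde Q)A^{\eta})^{-1/2}y$ is only defined as an element of $H_T$, so neither the evaluation $K(t)((Q(T)+\tilde Q)A^{\eta})^{-1/2}y$ for \emph{every} $t \in [0,T]$ nor continuity in $t$ is available (applying the lemma with $\Gamma = K(t)$ for each fixed $t$ would produce $t$-dependent null sets, and you cannot intersect uncountably many of them). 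Your attempted repair --- that $t \mapsto K(t)((Q(T)+\tilde Q)A^{\eta})^{-1/2}(y-S(T)x)$ is continuous because $y - S(T)x \in \dot{H}^{\alpha}$ --- is false: $\dot{H}^{\alpha}$ embeds into the reproducing kernel Hilbert space of $\mu_T$, which is a $\mu_T$-null set, so a $\mu_T$-typical $y \in \cM$ does \emph{not} lie in $\dot{H}^{\alpha}$; only $S(T)x$ does. Your ``main obstacle'' paragraph resolves the issue only for the random argument $X^0(T)+Z$ (yielding the continuous modification of $\hat{X}^0$), not for deterministic $y$ in a full-measure Borel subspace, which is what the statement requires.

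The repair is exactly the step the paper takes and which you stop just short of: define $\cM$ \emph{explicitly} as the set of $y \in \dot{H}^{\eta}$ for which $\sum_{j} \mu_j^{-1/2} \inpro[\dot{H}^{\eta}]{y}{f_j}\, \cK f_j$ converges in $\cC_T$, where $(\mu_j,f_j)_{j=1}^\infty$ are the eigenpairs of $(Q(T)+\tilde Q)A^{\eta}$; this is a Borel subspace on which $\cK((Q(T)+\tilde Q)A^{\eta})^{-1/2}$ is linear with values in $\cC_T$. Your $L^2(\Omega,\cC_T)$-convergence of the Gaussian series upgrades to $\IP$-a.s.\ convergence in $\cC_T$ by the equivalence of almost sure and mean-square convergence of series of independent centered Gaussian summands in a Banach space \cite[Corollary~6.4.4]{HvNVW17b}, and this a.s.\ convergence is precisely the statement $\mu_T(\cM)=1$. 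On this $\cM$, evaluation at every $t$ and continuity of the deterministic term $K(\cdot)((Q(T)+\tilde Q)A^{\eta})^{-\frac{1}{2}}(y-S(T)x)$ come for free, while $S(T)x \in \dot{H}^{\alpha} \subset ((Q(T)+\tilde Q)A^{\eta})^{1/2}(\dot{H}^{\eta}) \subset \cM$ as you argued; with that substitution the rest of your argument goes through.
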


\begin{proof}
	We first note that the equality in~\eqref{eq:prop:bridge-existence:existence-3} is a straightforward consequence of Theorem~\ref{thm:conditional-gaussian}. Next, by Lemma~\ref{lem:conditional-lemma} there is a subspace $\cM \subset \dot{H}^\eta$ with $\mu_T(\cM) = 1$ on which $\cK ((Q(T)+\tilde Q)A^{\eta})^{-1/2}$ is well-defined and linear. To obtain a continuous process, we explicitly choose $\cM$ by 
	\begin{equation*}
		\cM = \left\{ y \in \dot{H}^\eta : \sum_{j = 1}^\infty \frac{1}{\sqrt{\mu_{j}}} \inpro[\dot{H}^\eta]{y}{f_{j}} \cK f_{j} \text{ converges in } \cC_T \right\}
	\end{equation*}
	where $(\mu_{j},f_j)_{j=1}^\infty$ are the eigenpairs of $(Q(T)+\tilde Q)A^{\eta}$. Note that this is indeed a Borel subspace of $\dot{H}^\eta$ on which $\cK ((Q(T)+\tilde Q)A^{\eta})^{-1/2}$ is linear. That $\mu_T(\cM) = 1$ is equivalent to the $\IP$-a.s.\ convergence in $\cC_T$ of the sum
	\begin{equation*}
		\sum_{j = 1}^\infty \frac{1}{\sqrt{\mu_{j}}} \inpro[\dot{H}^\eta]{X(T) + Z}{f_{j}} \cK f_{j},
	\end{equation*}
	which holds as a consequence of $\cK$ being $\gamma$-radonifying, since $\IP$-almost sure and mean square convergence of series of independent mean zero Gaussian random variables in a Banach space are equivalent \cite[Corollary~6.4.4]{HvNVW17b}. Moreover, as in~\cite[Section~4]{GM06}, that $\cK$ is $\gamma$-radonifying implies the existence of a continuous modification of $\hat{X}^0$. The proof is completed by noting that $S(t)(H) \subset \cM$ for all $t > 0$. This follows from the fact that $S(t)(H) \subset \dot{H}^r$ for $t >0$ and all $r \in \R$ along with the embedding $\dot{H}^{\alpha} \hookrightarrow ((Q(T) + \tilde Q) A^{\eta})^{\frac{1}{2}}(\dot{H}^{\eta})$.
\end{proof}

The next proposition justifies why we refer to $X^{x,y}$ as $X^x$ conditioned on $X^x(T) + Z = y$. Its proof is based on the second part of Theorem~\ref{thm:conditional-gaussian} and is essentially a word-by-word repetition of that of~\cite[Theorem~2.14]{GM08}. We therefore omit it. 

\begin{proposition}
	\label{prop:bridge-conditional-property}
	Let $\Phi \colon L^2([0,T],H) \to \R$ be an arbitrary functional for which $\E[|\Phi(X^x)|] < \infty$. Under the same assumptions as Proposition~\ref{prop:bridge-existence},
	\begin{equation*}
		\E\left[\Phi(X^x) | X^x(T)+Z = y \right] = \E[\Phi(X^{x,y})]
	\end{equation*}
	for $\mu_T$-a.e. $y \in \dot{H}^{\eta}$. The left hand side of this equation is defined as a function $g_\Phi \in L^1((\dot{H}^{\eta},\mu_T),\R)$ of $y$ such that $g_\Phi(X^x(T)+Z) = \E\left[\Phi(X^x) | X^x(T)+Z \right]$ $\IP$-a.s.
\end{proposition}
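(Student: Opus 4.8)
The plan is to exhibit the conditional law of $X^x$ given $X^x(T)+Z$ through a ``freezing'' argument applied to the explicit representation~\eqref{eq:prop:bridge-existence:existence-2}. First I would set $\hat X^x := \E[X^x \mid X^x(T)+Z]$; by Theorem~\ref{thm:conditional-gaussian}(ii) applied to the jointly Gaussian pair $(X^x(T)+Z, X^x)$ in $\dot H^{\eta} \oplus H_T$, together with~\eqref{eq:observation-w-noise-cov}, \eqref{eq:crosscov-X-XT} and the definition~\eqref{eq:kappa} of $\cK$, this conditional expectation equals $\psi(X^x(T)+Z)$, where $\psi(y) := S(\cdot)x + \cK\,((Q(T)+\tilde Q) A^{\eta})^{-\frac{1}{2}}(y - S(T)x)$ is well-defined and linear for $y$ in the Borel set $\cM$ of Proposition~\ref{prop:bridge-existence} and is extended measurably (say by zero) off $\cM$. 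Note that $\cM$ has full measure not only for $\mu_T$ but also for the law of $X^x(T)+Z$, since $S(T)x$ lies in the reproducing kernel Hilbert space $((Q(T)+\tilde Q) A^{\eta})^{1/2}(\dot H^{\eta})$ and the two measures are therefore equivalent. With this notation, the residual $R := X^x - \hat X^x = X^0 - \hat X^0$ satisfies, by~\eqref{eq:prop:bridge-existence:existence-2} and~\eqref{eq:prop:bridge-existence:existence-3}, both $X^{x,y} = R + \psi(y)$ for every $y \in \cM$ and $X^x = R + \psi(X^x(T)+Z)$ $\IP$-a.s.

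The structural fact driving the proof is that $R$ is independent of $X^x(T)+Z$. Since $\hat X^0$ is a measurable linear image of $X^0(T)+Z$ (Lemma~\ref{lem:conditional-lemma}), the pair $(R, X^x(T)+Z)$ is jointly Gaussian, so it suffices to verify $\Cov(R, X^x(T)+Z) = 0$; this is the standard orthogonality of the residual of a Gaussian conditional expectation, and jointly Gaussian uncorrelated random variables are independent \cite{M84}. Because $\sigma(X^x(T)+Z) = \sigma(X^0(T)+Z)$, the residual $R$ is in fact independent of the entire $\sigma$-algebra $\sigma(X^x(T)+Z)$.

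Granting this independence, I would invoke the conditional Fubini (``freezing'') lemma: if $R$ is independent of a $\sigma$-algebra $\mathcal{G}$, $V$ is a $\mathcal{G}$-measurable random variable taking values in a Polish space, and $F$ is jointly measurable with $\E[|F(R,V)|] < \infty$, then $\E[F(R,V)\mid\mathcal{G}] = h(V)$ $\IP$-a.s., where $h(v) := \E[F(R,v)]$. Applying this with $\mathcal{G} = \sigma(X^x(T)+Z)$, $V = X^x(T)+Z$, and $F(r,v) := \Phi(r + \psi(v))$ (noting $r + \psi(v) \in H_T = L^2([0,T],H)$, and that $X^{x,y} = R + \psi(y) \in \cC_T$ for $y \in \cM$, so that $\Phi$ is applicable), yields $\E[\Phi(X^x)\mid X^x(T)+Z] = h(X^x(T)+Z)$ with $h(y) = \E[\Phi(R + \psi(y))] = \E[\Phi(X^{x,y})]$ for $y \in \cM$, hence for $\mu_T$-a.e.\ $y$. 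Thus $g_\Phi := h$ is a version of the left-hand side, which is the assertion. For the stated integrability, running the same computation with $|\Phi|$ in place of $\Phi$ (reducing first to bounded functionals by truncation and monotone convergence) gives $\int_{\dot H^{\eta}} \E[|\Phi(X^{x,y})|] \dd\mu_T(y) = \E[|\Phi(X^x)|] < \infty$, so that $h$ is finite $\mu_T$-a.e.\ and belongs to $L^1((\dot H^{\eta},\mu_T),\R)$.

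The hard part is not conceptual but a matter of bookkeeping: making the freezing lemma rigorous when $\psi$ is built from the unbounded, merely $\mu_T$-a.e.\ defined operator $\cK\,((Q(T)+\tilde Q) A^{\eta})^{-\frac{1}{2}}$, so that all the intervening maps are genuinely Borel on the relevant spaces and every ``$\IP$-a.s.'' and ``$\mu_T$-a.e.'' qualifier is consistent, and, relatedly, confirming that $R$ remains jointly Gaussian with $X^x(T)+Z$ once the cylindrical variable $Z$ is incorporated through its $\dot H^{\eta}$-valued realization. These points are handled exactly as in~\cite[Theorem~2.14]{GM08}, so the argument goes through essentially unchanged.
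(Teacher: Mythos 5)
Your argument is correct and follows essentially the same route as the paper, whose own proof is omitted precisely because it is the argument of \cite[Theorem~2.14]{GM08}: the conditional-expectation formula of Theorem~\ref{thm:conditional-gaussian}, the decomposition of $X^x$ into $\psi(X^x(T)+Z)$ plus the residual $R=X^0-\hat X^0$, Gaussian orthogonality giving independence of $R$ from $X^x(T)+Z$, and the freezing/disintegration step, with the Cameron--Martin shift handling the passage from the law of $X^x(T)+Z$ to $\mu_T$. No gaps beyond the bookkeeping you already flag, which is handled exactly as in the cited reference.
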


\begin{remark}
	The proposition above is key to the approximation of the linear SPDE solution $X^x$ conditioned on $X^x(T) + Z = y$. This is accomplished via the approximation of the bridge process $X^{x,y}$ given by~\eqref{eq:prop:bridge-existence:existence-1} by spectral or finite element methods in the next parts of the paper. There is a large literature on such spatial approximations of SPDEs with nonlinearities (see, e.g., \cite{K14,AL16,AKL16,CH19,KLP20,CHS21} for some examples as well as \cite{JK09,LPS14} for surveys of earlier literature). However, our approach cannot directly extend these results to the approximation of nonlinear SPDE solutions $X^x$ conditioned on $X^x(T) + Z = y$. The reason for this is that the proof of Proposition~\ref{prop:bridge-conditional-property} relies on the process $X^x$ being Gaussian, as does the analysis of approximations of $X^{x,y}$ below. This only holds for linear SPDEs with additive noise. 
	
	However, in certain applications, our results might still be of interest to the nonlinear setting. In finite dimensions, a linear SDE bridge process can be used in parameter estimation of a nonlinear SDE model in a Markov chain Monte Carlo algorithm. This is in part accomplished by showing equivalence of the laws of the linear and nonlinear SDE bridge processes and approximating (a value proportional to) the Radon--Nikodym derivative \cite{SMZ17}. We are not aware of analogous results in the infinite-dimensional SPDE setting. However, in \cite{GM06} conditions were derived on a nonlinear stochastic reaction-diffusion equation such that its law at a fixed time is equivalent to that of the corresponding linear equation, see \cite[Proposition~5.1, Example~9.2]{GM06} for details. Moreover, the process $X^{x,y}$ (with $Z=0$) was used in \cite[Theorem~5.2]{GM06} to derive an expression for the corresponding Radon--Nikodym derivative. This is a tentative suggestion that, as in the  finite-dimensional setting, the approximation of linear SPDE bridge processes is relevant also for parameter estimation in nonlinear SPDE models.
\end{remark}

\section{Spatial approximation of SPDE bridges with observation noise}
\label{sec:approximation}

In this section, we introduce an abstract, spatially semidiscrete approximation $X^{x,y}_V$ of the SPDE bridge $X^{x,y}$. We consider a sequence $(V_i)_{i \in \cI}$ of finite-dimensional Hilbert subspaces of $H$, all equipped with the inner product of $H$. Here $\cI$ is a general index set. We fix a subspace $V \in (V_i)_{i \in \cI}$ and seek an approximation of the bridge process $X^{x,y}$ with values in $V$. We derive three technical lemmas related to this process. These are used in subsequent sections, in which we specify $V$ to correspond to well-known spatial discretization methods: the spectral method and the finite element method.

Since our observation noise $Z$ is cylindrical, we need to make sense of projections of cylindrical random variables onto $V$. This is covered by the following lemma. We write $P_V \colon H \to V$ for the orthogonal projection onto $V$ but in the lemma, $P_V Y$ is only formally the projection of a cylindrical random variable $Y$ in $H$ onto $V$. Recall that by the notation $\inpro{Y}{v}$ we refer to the evaluation of $Y$ on $v \in H$, see Remark~\ref{rem:cyl-notation}.

\begin{lemma}
	\label{lem:fin-dim-proj-well-def}
	Let $V$ be a finite-dimensional subspace of $H$ equipped with the inner product of $H$ and let $Y$ be a cylindrical Gaussian random variable on $H$. Then, the $V$-valued Gaussian random variable $P_V Y$ given by
	\begin{equation*}
		%\label{eq:cylindrical-projection}
		\inpro{P_V Y}{v} := \inpro{Y}{v}
	\end{equation*}
	for all $v \in V$
	is well-defined and $\sigma(P_V Y) \subset \sigma(Y)$.
\end{lemma}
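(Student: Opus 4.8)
The plan is to construct $P_V Y$ by hand from an orthonormal basis of $V$ and then to read off the three assertions. First I would fix an orthonormal basis $(v_1, \dots, v_n)$ of the finite-dimensional space $V$ (with respect to the inner product of $H$) and \emph{define}
\[ P_V Y := \sum_{k=1}^{n} \inpro{Y}{v_k}\, v_k . \]
Each summand involves the genuine real-valued random variable $\inpro{Y}{v_k} = Y(v_k)$, which exists precisely because a cylindrical random variable is by definition a linear map from $H$ into the space of real-valued random variables; hence this \emph{finite} sum is a bona fide $V$-valued (and a fortiori $H$-valued) random variable, with no convergence subtlety of the type warned against in Remark~\ref{rem:cyl-notation}. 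To see that it is Gaussian, note that for all $a_1, \dots, a_n \in \R$ the real random variable $\sum_{k=1}^n a_k \inpro{Y}{v_k} = \inpro{Y}{\sum_{k=1}^n a_k v_k}$ is mean-zero Gaussian, by linearity of $Y$ together with the defining property of a Gaussian cylindrical random variable; thus the $\R^n$-valued vector $(\inpro{Y}{v_1}, \dots, \inpro{Y}{v_n})$ is jointly Gaussian, and $P_V Y$, being a fixed linear image of it, is a $V$-valued Gaussian random variable.

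Next I would verify the characterizing identity and deduce well-definedness. For $v \in V$, writing $v = \sum_{k=1}^n \inpro{v}{v_k} v_k$ and using linearity of $Y$ gives $\inpro{P_V Y}{v} = \sum_{k=1}^n \inpro{v}{v_k}\inpro{Y}{v_k} = \inpro{Y}{\sum_{k=1}^n \inpro{v}{v_k} v_k} = \inpro{Y}{v}$, so the object described in the statement exists. It is unique up to $\IP$-null sets — hence well-defined, and in particular independent of the chosen basis — because any $V$-valued random variable $U$ with $\inpro{U}{v} = \inpro{Y}{v}$ $\IP$-a.s.\ for every $v \in V$ satisfies $U = \sum_{k=1}^n \inpro{U}{v_k} v_k = \sum_{k=1}^n \inpro{Y}{v_k} v_k = P_V Y$ $\IP$-a.s.

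Finally, the $\sigma$-algebra inclusion follows directly: applying~\eqref{eq:sigma-algebra-X} to the $V$-valued random variable $P_V Y$ with the basis $(v_k)_{k=1}^n$ yields $\sigma(P_V Y) = \sigma\big((\inpro{P_V Y}{v_k})_{k=1}^n\big) = \sigma\big((\inpro{Y}{v_k})_{k=1}^n\big) \subset \sigma(Y)$, since each $\inpro{Y}{v_k}$ is $\sigma(Y)$-measurable by the very definition of the $\sigma$-algebra generated by the cylindrical random variable $Y$. I do not anticipate a genuine obstacle in this lemma; the only points requiring care are the purely formal meaning of $\inpro{Y}{\cdot}$ (Remark~\ref{rem:cyl-notation}) and the fact that linearity of $Y$ is an identity between equivalence classes of random variables, which is exactly what is used above.
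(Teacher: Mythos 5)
Your construction of $P_V Y$, the verification of the defining identity, the uniqueness argument and the Gaussianity argument are essentially the paper's own proof (the paper likewise fixes a basis $(\phi_j)_{j=1}^N$ of $V$, sets $P_V Y := \sum_j \inpro{Y}{\phi_j}\phi_j$, and deduces uniqueness from $\inpro{P_VY-\tilde P_VY}{v}=0$ for all $v\in V$), so those parts are fine.

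The one place where you cut a corner is the final inclusion $\sigma(P_V Y)\subset\sigma(Y)$. You declare that each $\inpro{Y}{v_k}$ is ``$\sigma(Y)$-measurable by the very definition of the $\sigma$-algebra generated by the cylindrical random variable $Y$'', but the paper never defines $\sigma(Y)$ for a cylindrical variable in that evaluation-based way. In this paper $Y$ is systematically identified with the induced Gaussian random variable $\tilde Y$ taking values in $\dot{H}^{-\zeta}$ (see the discussion around \eqref{eq:trace-version-of-cylindrical-rv}, after which ``no notational distinction'' is made), and $\sigma(Y)$ means the $\sigma$-algebra generated by that $\dot{H}^{-\zeta}$-valued random variable; this is also the meaning needed when the lemma is invoked in the proof of Lemma~\ref{lem:abstract-error-lem-1}, where $\sigma(P_V(X^0(T)+Z))\subset\sigma(X^0(T)+Z)$ justifies the tower property with $X^0(T)+Z$ viewed as an $\dot{H}^{\eta}$-valued variable. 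With that reading, the measurability of the evaluations is not a tautology: one has to recover $\inpro{Y}{v_k}$ from $\tilde Y$, which is exactly what the paper does via the expansion \eqref{eq:cylindrical-rv-from-trace-version},
\begin{equation*}
	\inpro{Y}{v_k} = \sum_{j=1}^\infty \inpro[\dot{H}^{-\zeta}]{Y}{A^{\zeta}e_j}\,\inpro{v_k}{e_j},
\end{equation*}
an $L^2(\Omega,\R)$-limit of $\sigma(Y)$-measurable random variables, before applying \eqref{eq:sigma-algebra-X} together with $\sigma(P_VY)=\sigma(\inpro{Y}{v_1},\dots,\inpro{Y}{v_n})$. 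Adding this step (and noting that the a.s.\ identification is harmless on the completed probability space) closes the gap; as written, your inclusion is only trivially true under a different definition of $\sigma(Y)$ than the one the paper actually uses downstream.
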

\begin{proof}
	Let $(\phi_j)_{j=1}^N$ be an orthogonal basis of $V$ with $\dim(V) = N \in \N$. Setting
	\begin{equation*}
		P_V Y := \sum_{j=1}^N \inpro{Y}{\phi_j} \phi_j
	\end{equation*}
	defines a $V$-valued random variable which for a given $v \in V$, fulfills $\inpro{P_V Y}{v} = \inpro{Y}{v}$ by linearity of $\inpro{Y}{\cdot}$. To see that it is unique, suppose that $\tilde P_V Y$ is another random variable in $L^2(\Omega,V)$ such that $\inpro{\tilde P_V Y}{v} = \inpro{Y}{v}$ for all $v \in V$. Then 
	\begin{equation*}
		\E[\norm{P_V Y-\tilde P_V Y}^2] = \E\Big[\Big(\sup_{\substack{v \in V \\ \norm{v}=1}} |\inpro{P_V Y-\tilde P_V Y}{v}| \Big)^2\Big] = 0.
	\end{equation*} 
	The fact that $P_V (Y)$ is Gaussian is an immediate consequence of the equation $\inpro{P_V Y}{v} = \inpro{Y}{v}$. By~\eqref{eq:cylindrical-rv-from-trace-version}, we obtain
	\begin{equation*}
		\inpro{Y}{\phi_k} = \sum_{j=1}^\infty  \inpro[\dot{H}^{-\zeta}]{Y}{A^{\zeta} e_j}\inpro{\phi_k}{e_j}
	\end{equation*} 
	for $k = 1, \ldots, N$. Since $\sigma(P_V (Y)) = \sigma(\inpro{Y}{\phi_1}, \ldots, \inpro{Y}{\phi_N})$, the fact that~$\sigma(P_V (Y)) \subset \sigma(Y)$ follows from~\eqref{eq:sigma-algebra-X}.
\end{proof}
\begin{remark}
	If $Y_1, Y_2$ are two independent cylindrical Gaussian random variables on $H$ (in the sense that $\inpro{Y_1}{u}$ and $\inpro{Y_2}{v}$ are independent for all $u,v \in H$), then $aY_1 + bY_2$ is a cylindrical Gaussian random variable on $H$ for all $a,b \in \R$ and $P_V (aY_1 + bY_2) = a P_V Y_1 + b P_V Y_2$ $\IP$-a.s. Note also, that if $Y$ is an $H$-valued Gaussian random variable, $P_V Y$ (as defined above for the cylindrical random variable induced by $Y$) coincides with the usual orthogonal projection of $Y$ onto $V$.
\end{remark}

\begin{remark}	 
	\label{rem:proj-cyl-obs-well-def}
	Equivalently, we may define $P_V Y$ in Lemma~\ref{lem:fin-dim-proj-well-def} by 
	\begin{equation}
		\label{eq:spectral-expansion-PVY}
		P_V Y := \sum_{j=1}^\infty \inpro[\dot{H}^{-\zeta}]{Y}{f_j} P_V f_j,
	\end{equation}
	where we consider $Y$ as a Gaussian random variable on $\dot{H}^{-\zeta}$ with $(f_j)_{j=1}^\infty \subset \Cov(Y)^{1/2}(\dot{H}^{-\zeta}) \subset H$ being the eigenbasis of $\dot{H}^{-\zeta}$ corresponding to $\Cov(Y) \in \cL_1(\dot{H}^{-\zeta})$. The sum converges to a Gaussian random variable in $L^2(\Omega,H)$ with $\E[\norm{P_V Y}^2] = \norm[\cL_2(\Cov(Y)^{1/2}(H),H)]{P_V}^2 < \infty$, since $P_V$ has finite-dimensional range. That this definition is equivalent to that of Lemma~\ref{lem:fin-dim-proj-well-def} can be seen by an argument similar to that leading up to~\eqref{eq:cylindrical-rv-from-trace-version}. Note that the Gaussian random variables in the sum \eqref{eq:spectral-expansion-PVY} are independent. Therefore, the convergence holds also $\IP$-almost surely \cite[Corollary~6.4.4]{HvNVW17b}. With $\mu_Y$ denoting the image measure of $Y$, we obtain a Borel subspace $\cM_{V,Y} \subset \dot{H}^{-\zeta}$ with $\mu_Y(\cM_{V,Y}) = 1$ such that $P_V$ is well-defined and linear on $\cM_{V,Y}$. When restricted to $H$, this operator coincides with the ordinary orthogonal projection $P_V \colon H \to V$. 
\end{remark}

Next, we let $S_V = (S_V(t))_{t \in [0,T]}$ be a $C_0$-semigroup of symmetric linear operators on $V$. We use this to define an approximation $X^{x,y}_V$ of $X^{x,y}$ by
\begin{equation}
	\label{eq:abstract-discrete-bridge-def}
	X^{x,y}_V(t) := X_V^x(t) - Q_{V}(t) S_V(T-t) (Q_{V}(T) + \tilde Q_V  )^{-1}(X_V^x(T) + P_V Z-P_V y)
\end{equation}
for $t \in [0,T]$ and $y$ in $\dot{H}^{\eta}$. Here 
\begin{equation}
	\label{eq:abstract-discrete-spde-def}
	X_V^x(t) := S_V(t)P_Vx+\int_0^t S_V(t-s)P_V \dd W(s)
\end{equation} is a spatially semidiscrete approximation of the solution $X^x(t)$ to the SPDE~\eqref{eq:SPDE} at time $t \in [0,T]$ started at $x \in H$. Since $V$ is finite-dimensional, the stochastic integral is well-defined and $X_V^x$ has a continuous modification. The operators~$\tilde Q_V$ and~$Q_{V}(t)$ are given by $\tilde Q_V = P_V \tilde Q P_V$ and
\begin{equation*}
	{Q}_{V}(t) := \Cov(X^0_V(t)) = \int^t_0 S_V(s) P_V Q S_V(s) \dd s
\end{equation*}
for $t \in [0,T]$. The $V$-valued random variable $P_V Z$ is given by Lemma~\ref{lem:fin-dim-proj-well-def} and $P_V y$ is well-defined for $\mu_T$-a.e. $y \in \dot{H}^\eta$, c.f.\ Remark~\ref{rem:proj-cyl-obs-well-def}. 

The rest of this section consists of two lemmas that will help us obtain error rates for this SPDE approximation when the space $V$ and semigroup approximation $S_V$ are further specified in the following sections. To obtain higher convergence rates of the approximation $X_V^{x,y}$, we must introduce another regularity parameter $\rho$.
\begin{assumption}
	\label{ass:rho}
	There is a $\rho \ge 0$ such that 
	\begin{equation*}
		\int^T_0 \norm[\cL(Q^{\frac{1}{2}}(H),\dot{H}^\rho)]{S(T-t)}^2 \dd t = \int^T_0 \norm[\cL(H)]{A^{\frac{\rho}{2}}S(T-t)Q^{\frac{1}{2}}}^2 \dd t < \infty
	\end{equation*}	
\end{assumption}
Under~Assumption~\ref{ass:Q}, this is fulfilled for all $\rho \in [0,\beta)$ but, depending on the properties of $Q$ and $A$, it is often true for higher values of $\rho$. Note also, that under this assumption $Q(T)$ extends to $\dot{H}^{-\rho}$ and
\begin{equation}
	\label{eq:QT-rho-bound}
	\norm[\cL(\dot{H}^{-\rho},\dot{H}^{\rho})]{Q(T)} = \norm{A^{\frac{\rho}{2}}Q(T)A^{\frac{\rho}{2}}} < \infty.
\end{equation}

\begin{example}
	%\label{ex:Q-rho-reg}
	We comment on this assumption in the setting of Example~\ref{ex:Q-reg}. In the case of space-time white noise, we directly see from~\eqref{eq:semigroup-bound} that Assumption~\ref{ass:rho} is satisfied for all $\rho < 1$. Next, we consider spatially homogeneous noise, with a kernel $q$ fulfilling the same condition with $\sigma > d/2$ as in Example~\ref{ex:Q-reg}. From the fact that $Q^{1/2}(H) \hookrightarrow H^\sigma$, $\eqref{eq:sobolev_id_1}$ and $\eqref{eq:sobolev_id_2}$ we obtain that Assumption~\ref{ass:rho} is satisfied for all $\rho < 1 + \min(\sigma,1/2)$ in the case of Dirichlet boundary conditions and for all $\rho < 1 + \min(\sigma,3/2)$ in the case of Neumann boundary conditions.
\end{example}

We also need an assumption that relates to the properties of the discrete spaces $(V_i)_{i \in \cI}$. 

\begin{assumption} 
	\label{ass:abstract-approx}
	The following three statements hold true.
	\begin{enumerate}[label=(\roman*)]
		\item \label{ass:abstract-approx:findim-bound-1} There is a constant $C< \infty$ such that
		\begin{equation*}
			\sup_{\substack{V \in (V_i)_{i \in \cI} \\ t \in [0,T]}} \norm[\cL(H)]{S_V(t)P_V} 
			< C.
		\end{equation*}
		\item \label{ass:abstract-approx:findim-bound-2} There is a constant $C< \infty$ such that
		\begin{equation*}
			\sup_{V \in (V_i)_{i \in \cI}} \E[\norm[\cC_T]{X_V^0}^2] 
			< C.
		\end{equation*}
		\item \label{ass:abstract-approx:commutativity}
		For all $V \in (V_i)_{i \in \cI}$, the orthogonal projection $P_V$ commutes with $\tilde Q$.
	\end{enumerate}
\end{assumption}

\begin{remark}
	The estimate in Assumption~\ref{ass:abstract-approx}\ref{ass:abstract-approx:findim-bound-2} holds with $2$ replaced by any $p\ge1$, since $X^0_V$ is Gaussian.
\end{remark}

\begin{remark}
	\label{rem:commutativity}
	Clearly, the commutativity of $P_V$ with $\tilde Q$ implies that $\tilde Q(V) \subset V$. Since $\tilde Q(V) \subset V$ if and only if $\tilde Q(V^\perp) \subset V^\perp$ (as $\tilde Q$ is assumed to be self-adjoint), one can use the identity $\tilde Q = \tilde Q P_V  + \tilde Q P_{V^\perp}$ to see that if $\tilde Q(V) \subset V$, then $P_V$ and $\tilde Q$ commute. Therefore, Assumption~\ref{ass:abstract-approx}\ref{ass:abstract-approx:commutativity} is equivalent to the statement: for all $V \in (V_i)_{i \in \cI}$, $\tilde Q(V) \subset V$. 
\end{remark}

The next two results, Lemmas~\ref{lem:abstract-error-lem-1} and~\ref{lem:abstract-error-lem-2}, are key to our main results in Sections~\ref{sec:spectral-approximation} and~\ref{sec:approximation-fem}. In both lemmas, we derive bounds on approximations of conditional expectations. These are applied to approximations of the term $\hat{X}^0$ in the decomposition~\eqref{eq:prop:bridge-existence:existence-2}.

In the first of these key lemmas, the expression $P_V (X^0(T) + Z)$ should be understood in the sense of Lemma~\ref{lem:fin-dim-proj-well-def} and $X^0(T)+Z$ as a random variable in $\dot{H}^{\eta}$. In the proof we make use of properties of the cross-covariance operator
\begin{equation*}
	{Q}_{V,H}(t) := \Cov(X^0_V(t),X^0(t)) = \int^t_0 S_V(s) P_V Q S(s) \dd s,
\end{equation*}
and we write ${Q}_{H,V}(t):={Q}_{V,H}(t)^*=\Cov(X^0(t),X^0_V(t))$ for $t \in [0,T]$.

\begin{lemma}
	\label{lem:abstract-error-lem-1}
	Let Assumptions~\ref{ass:Q}, \ref{ass:Z}, \ref{ass:rho} and~\ref{ass:abstract-approx} be satisfied. If $\alpha \le \rho$, then for all $p \ge 1$, there exists a constant $C < \infty$ such that for all $V \in (V_i)_{i \in \cI}$,
	\begin{align*}
		&\norm[L^p(\Omega,\cC_T)]{\E[X^{0} | X^0(T) + Z] - \E[X_V^{0} | P_V (X^0(T) + Z)]} \\ 
		&\quad\le C \Bigg( \int_{0}^{T} t^{-\epsilon} \norm[\cL_2(Q^{\frac{1}{2}}(H),H)]{S(t)-S_V(t)P_V}^2 \dd t + \sup_{t \in [0,T]} \norm[\cL(\dot{H}^r,H)]{S(t)-S_V(t)P_V}^2 \\
		&\hspace{6em}+ \norm[\cL(\dot{H}^{-\alpha},\dot{H}^{-\rho})]{I-P_V}^2 \Bigg)^{\frac{1}{2}}.
	\end{align*}
\end{lemma}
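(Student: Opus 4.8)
The plan is to reduce to a second-moment bound and then telescope. Both $\E[X^0\mid X^0(T)+Z]$ (Gaussian in $\cC_T$ by Lemma~\ref{lem:K-is-radonifying} and Proposition~\ref{prop:bridge-existence}) and $\E[X^0_V\mid P_V(X^0(T)+Z)]$ (Gaussian in $\cC_T$ since $V$ is finite-dimensional) are Gaussian $\cC_T$-valued random variables, hence so is their difference, and by the equivalence of Gaussian moments \cite[Proposition~3.14]{H09} it suffices to take $p=2$, i.e.\ to bound $(\E[\norm[\cC_T]{\cdot}^2])^{1/2}$. Set $\Pi:=Q(T)+\tilde Q$ (with $\Pi^{-1}$ understood via $A^{\eta}((Q(T)+\tilde Q)A^{\eta})^{-1}=(Q(T)+\tilde Q)^{-1}$ on $\dot H^{\eta}$) and $\Pi_V:=P_VQ(T)P_V+\tilde Q_V$ on $V$. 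By Theorem~\ref{thm:conditional-gaussian}, \eqref{eq:crosscov-X-XT} and \eqref{eq:prop:bridge-existence:existence-3}, $\E[X^0(t)\mid X^0(T)+Z]=Q(t)S(T-t)\Pi^{-1}(X^0(T)+Z)$; and using Lemma~\ref{lem:fin-dim-proj-well-def}, the independence of $Z$ and $W$, $\Cov(X^0_V(t),X^0(T))=Q_{V,H}(t)S(T-t)$ and $\Cov(P_VZ)=\tilde Q_V=\tilde QP_V$ (Assumption~\ref{ass:abstract-approx}\ref{ass:abstract-approx:commutativity}), the finite-dimensional conditioning formula gives $\E[X^0_V(t)\mid P_V(X^0(T)+Z)]=Q_{V,H}(t)S(T-t)P_V\Pi_V^{-1}P_V(X^0(T)+Z)$.

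I would then telescope $Q(t)S(T-t)\Pi^{-1}$ against $Q_{V,H}(t)S(T-t)P_V\Pi_V^{-1}P_V$, applied to the observation $X^0(T)+Z$ (understood on the full-measure subspace of Lemma~\ref{lem:conditional-lemma}, with smoothing factors kept composed where necessary so that each piece is still a $\cC_T$-valued Gaussian), by inserting $Q_{V,H}(t)S(T-t)\Pi^{-1}$, $Q_{V,H}(t)S(T-t)P_V\Pi^{-1}$ and $Q_{V,H}(t)S(T-t)P_V\Pi^{-1}P_V$. This splits the error into (i)~$(Q(t)-Q_{V,H}(t))S(T-t)\Pi^{-1}(X^0(T)+Z)$; (ii)~$Q_{V,H}(t)S(T-t)(I-P_V)\Pi^{-1}(X^0(T)+Z)$; (iii)~$Q_{V,H}(t)S(T-t)P_V\Pi^{-1}(I-P_V)(X^0(T)+Z)$; (iv)~$Q_{V,H}(t)S(T-t)P_V(\Pi^{-1}-\Pi_V^{-1})P_V(X^0(T)+Z)$.

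The estimates rest on three facts. (a)~By Assumption~\ref{ass:Z}\ref{ass:Z:alpha}, \eqref{eq:sum-covariance-bounded-below} and Proposition~\ref{app:inverse-bound}, $\Pi^{-1}$ roughens by at most $2\alpha$, so — combined with the smoothing of $S(T-\cdot)$, Assumption~\ref{ass:rho} and $\alpha\le\rho$ — the $\Pi^{-1}$-fields above are uniformly (in $V$, in $L^2(\Omega)$) bounded in positive-order spaces and $X^0(T)+Z\in L^2(\Omega,\dot H^{-\alpha})$ with a $V$-independent bound. (b)~$\norm[\cL(H)]{Q_{V,H}(t)}\le C$ uniformly in $V,t$ (Assumption~\ref{ass:abstract-approx}\ref{ass:abstract-approx:findim-bound-1}--\ref{ass:abstract-approx:findim-bound-2} and Cauchy--Schwarz for cross-covariances), and, more sharply, $Q(\cdot)-Q_{V,H}(\cdot)$ is small in a Hilbert--Schmidt sense which, after the factorization method (the source of the weight $t^{-\epsilon}$), is governed by $\int_0^T t^{-\epsilon}\norm[\cL_2(Q^{1/2}(H),H)]{S(t)-S_V(t)P_V}^2\dd t$. (c)~Assumption~\ref{ass:abstract-approx}\ref{ass:abstract-approx:commutativity} makes $\tilde Q_V$ agree with $\tilde Q$ on $V$, so it contributes no error and transfers the lower bound $\Pi\gtrsim A^{-\alpha}$ to $\Pi_V\gtrsim A^{-\alpha}|_V$ uniformly in $V$, keeping $\Pi_V^{-1}$ under uniform control. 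Then: (i) yields the first right-hand term via (b); (ii) and (iii) yield, via (a), the self-adjointness of $I-P_V$ and $\alpha\le\rho$, the second term $\sup_t\norm[\cL(\dot H^r,H)]{S(t)-S_V(t)P_V}$ (from the smooth $\Pi^{-1}$-field) and the third term $\norm[\cL(\dot H^{-\alpha},\dot H^{-\rho})]{I-P_V}$; and (iv), treated with the resolvent identity $\Pi_V^{-1}-P_V\Pi^{-1}P_V=\Pi_V^{-1}P_V\Pi(I-P_V)\Pi^{-1}P_V$ and facts (a), (c), reduces once more to the second and third terms. Summing gives the claim.

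The hard part will be the uniform-in-$V$ control of $\Pi_V^{-1}$ and of $\Pi^{-1}-\Pi_V^{-1}$: these operators act on different spaces, the inverses are unbounded on the spaces where $X^0(T)+Z$ lives, and the stability bound~\eqref{eq:sum-covariance-bounded-below} must be transported to the discrete level with a $V$-independent constant — which is exactly what Assumption~\ref{ass:abstract-approx}\ref{ass:abstract-approx:commutativity} secures, since without $P_V$ commuting with $\tilde Q$ one cannot obtain the uniform discrete lower bound and the resolvent expansion for (iv) collapses. A secondary, purely technical, nuisance is the fractional-order bookkeeping needed to make each telescoped piece a genuine $\cC_T$-valued Gaussian random variable — in particular to confirm that $X^0(T)+Z\in L^2(\Omega,\dot H^{-\alpha})$, so that $\norm[\cL(\dot H^{-\alpha},\dot H^{-\rho})]{I-P_V}$ may legitimately be pulled out — and to choose $r$ and $\epsilon$ compatibly with $\beta$ and with the regularity gained from $\Pi^{-1}$.
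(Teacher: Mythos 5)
Your skeleton matches the paper's in several respects (reduction to $p=2$ by Gaussianity, the identification $\E[X_V^0\mid P_V(X^0(T)+Z)]=Q_{V,H}(\cdot)S(T-\cdot)(P_VQ(T)P_V+\tilde Q_V)^{-1}P_V(X^0(T)+Z)$, the use of \eqref{eq:sum-covariance-bounded-below}, Proposition~\ref{app:inverse-bound} and the commutativity assumption for a $V$-uniform discrete stability bound), but the way you execute the telescoping contains a genuine gap. Your fact (a) asserts that $X^0(T)+Z\in L^2(\Omega,\dot H^{-\alpha})$, and you rely on it to pull the factor $\norm[\cL(\dot H^{-\alpha},\dot H^{-\rho})]{I-P_V}$ out of piece (iii). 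This is false in exactly the flagship case of the paper: for $\tilde Q=I$ one has $\alpha=0$ and $\eta=-\zeta<0$, so $Z$ is white noise with $\E[\norm{Z}^2]=\trace(I)=\infty$; Assumption~\ref{ass:Z}\ref{ass:Z:alpha} is a \emph{lower} bound on $\tilde Q$ and never places $Z$ in $\dot H^{-\alpha}$. The correct mechanism (the paper's) is to whiten first: since the error equals $(\cK-\cK_{V,H})\Cov(X^0(T)+Z)^{-1/2}(X^0(T)+Z)$, only operator norms against a standard cylindrical variable matter, and the factor $A^{\alpha/2}$ to the right of $I-P_V$ arises from the mapping property $((Q(T)+\tilde Q)A^{\eta})^{-1/2}\in\cL(\dot H^\alpha,\dot H^\eta)$ (and its dual extension), not from any membership of the data in $\dot H^{-\alpha}$; the smoothing to the left of $I-P_V$ then comes from the \emph{continuous} operators $Q(T)$ (via \eqref{eq:QT-rho-bound}) and $S(T-t)Q^{1/2}$ (via Assumption~\ref{ass:rho}).

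A second, related problem is that your pieces (ii)--(iv) detach the genuinely unbounded inverse $\Pi^{-1}$ and the projections from their compensating factors: after whitening, (ii) and (iii) force you to bound objects like $Q_{V,H}(t)S(T-t)(I-P_V)A^{\alpha/2}$ or $Q_{V,H}(t)S(T-t)P_V\Pi^{-1}(\cdots)$, whose control requires $V\subset\dot H^\alpha$, uniform boundedness of $P_V$ on $\dot H^{\pm\alpha}$ and uniform discrete smoothing of $S_V P_V$ --- i.e.\ Assumption~\ref{ass:abstract-approx-2}, which is \emph{not} a hypothesis of this lemma (the paper needs it only in Lemma~\ref{lem:abstract-error-lem-2}); your route would silently work only for $\alpha=0$, and the resolvent identity for (iv) likewise needs $\Pi^{-1}P_V$ to make sense on $V$. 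The paper's grouping avoids all of this: it factors $\cK=\cJ^1\cJ^0$, $\cK_{V,H}=\cJ^1_V\cJ^0_{V,H}$ and splits into $(\cJ^1-\cJ^1_V)\cJ^0+\cJ^1_V(\cJ^0-\cJ^0_{V,H})$, where $\cJ^0$ is a contraction thanks to the elementary inequality $\int_0^T\norm{Q^{1/2}S(T-t)A^{\eta}((Q(T)+\tilde Q)A^{\eta})^{-1/2}v}^2\dd t\le\norm[\dot H^\eta]{v}^2$ (i.e.\ $Q(T)\le Q(T)+\tilde Q$), the commutativity kills the $\tilde Q(I-P_V)$ contribution in $\cJ^0-\cJ^0_{V,H}$, and --- crucially for the $\cC_T$-norm, which your pointwise-in-$t$ pieces do not control by second moments alone --- the left factors are $\gamma$-radonifying into $\cC_T$ with norms $\E[\norm[\cC_T]{X^0-X^0_V}^2]^{1/2}$ and $\E[\norm[\cC_T]{X^0_V}^2]^{1/2}$, bounded by Proposition~\ref{app:prop:cont-error} and Assumption~\ref{ass:abstract-approx}\ref{ass:abstract-approx:findim-bound-2}. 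Your piece (i) is in fact identical to the paper's first split term, so the plan is repairable, but only by restoring this grouping rather than estimating (ii)--(iv) separately.
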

\begin{proof}
	For simplicity we assume $\eta<0$, the proof for the case that~$\eta\ge0$ is similar. The goal is to obtain the stated bound on $\norm[L^p(\Omega,\cC_T)]{\cE_V}$, where
	\begin{equation*}
		\cE_V = \E[X^{0} | X^0(T) + Z] - \E[X_V^{0} | P_V (X^0(T) + Z)] = \E[X^{0} - \E[X_V^{0} | P_V (X^0(T) + Z)] | X^0(T) + Z].
	\end{equation*}
	Here, \cite[Section~2.4.1]{VTC87} was used in the second equality, justified by~\eqref{eq:sigma-algebra-X} and the fact that $\sigma(P_V (X^0(T) + Z)) \subset \sigma(X^0(T) + Z)$. The proof is divided into several parts. In Part~\ref{lem:abstract-error-lem-1:pf:1}, we establish that $(X^{0} - \E[X_V^{0} | P_V (X^0(T) + Z)],X^0(T) + Z)$ are jointly Gaussian. By Theorem~\ref{thm:conditional-gaussian}, therefore, $\cE_V$ is an $H_T$-valued Gaussian random variable. We show, in Parts~\ref{lem:abstract-error-lem-1:pf:2} and~\ref{lem:abstract-error-lem-1:pf:3}, that $\norm[L^2(\Omega,\cC_T)]{\cE_V} < \infty$. Then it follows that the law of $\cE_V$ is Gaussian in $\cC_T$ too, since $\cC_T$ is dense in $H_T$. Therefore, we only consider the bound on $\norm[L^p(\Omega,\cC_T)]{\cE_V}$ for $p = 2$, since the $L^p(\Omega,\cC_T)$-norm of a $\cC_T$-valued Gaussian random variable can be bounded by a constant, depending on $p$, times the $L^2(\Omega,\cC_T)$ norm \cite[Proposition~3.14]{H09}.
	\begin{proofpart}
		\label{lem:abstract-error-lem-1:pf:1} % 1
		By considering sequences of elementary integrands and arguing as in the proof of \cite[Theorem~5.2(iii)]{DPZ14}, it can be seen, using also the independence of $Z$ and $W$, that $(X_V^{0},(X^0(T) + Z))$ is a pair of jointly Gaussian random variables. From this it follows that $(X^{0} - \E[X_V^{0} | P_V (X^0(T) + Z)],X^0(T) + Z)$ are jointly Gaussian. To see this, first note that with $$B_V := \Cov(X_V^0,P_V(X^0(T)+Z)) \Cov(P_V(X^0(T)+Z))^{-1} \in \cL(V,H_T),$$ we obtain from Theorem~\ref{thm:conditional-gaussian} that $\E[X_V^{0} | P_V (X^0(T) + Z)] = B_V P_V(X^0(T)+Z)$. Therefore, for $(u, v ) \in H_T \oplus \dot{H}^{\eta}$,
		\begin{align*}
			&\inpro[H_T]{X^{0} - \E[X_V^{0} | P_V (X^0(T) + Z)]}{u} + \inpro[\dot{H}^{\eta}]{X^0(T) + Z}{v} \\
			&\quad= \inpro[H_T]{X^{0}}{u} - \inpro{P_V (X^0(T) + Z)}{B_V^*u} + \inpro[\dot{H}^{\eta}]{X^0(T) + Z}{v} \\
			&\quad= \inpro[H_T]{X^{0}}{u} - \inpro{X^0(T)}{B_V^*u} -\inpro{Z}{B_V^*u} + \inpro[\dot{H}^{\eta}]{X^0(T)}{v} + \inpro[\dot{H}^{\eta}]{Z}{v}\\
			&\quad= \inpro[H_T]{X^{0}}{u} - \inpro{X^0(T)}{B_V^*u - A^{\eta}v} -\sum_{j=1}^{\infty} \inpro[\dot{H}^{\eta}]{Z}{A^{- \eta} e_j} \inpro{e_j}{B_V^*u} + \inpro[\dot{H}^{\eta}]{Z}{v},
		\end{align*}
		so this claim follows from the fact that the pair $(X^0,X^0(T))$ is jointly Gaussian and independent of $Z$. Here we made use of~Lemma~\ref{lem:fin-dim-proj-well-def} and \eqref{eq:cylindrical-rv-from-trace-version}.
	\end{proofpart}
	\begin{proofpart} % 2
		\label{lem:abstract-error-lem-1:pf:2}
		We apply a factorization argument to obtain a first bound on $\norm[L^2(\Omega,\cC_T)]{\cE_V}$. We write 
		\begin{equation*}
			\cK_{V,H} := \left(\Cov(X^0(T)+Z)^{-\frac{1}{2}} \Cov\big(X^0(T)+Z,\E[X_V^{0} | P_V (X^0(T) + Z)]\big)\right)^*
		\end{equation*}
		so that, by Theorem~\ref{thm:conditional-gaussian}, 
		\begin{equation*}
			\cE_V = \E[X^{0} - \E[X_V^{0} | P_V (X^0(T) + Z)] | X^0(T) + Z] = (\cK - \cK_{V,H})\Cov(X^0(T)+Z)^{-\frac{1}{2}} (X^0(T)+Z).
		\end{equation*}
		In light of~\eqref{eq:cylindrical-rv-from-trace-version} and~\eqref{eq:observation-w-noise-cov}, we obtain for $v \in V$ and $u \in \dot{H}^{\eta}$,
		\begin{align*}
			&\inpro[\dot{H}^{\eta}]{\Cov(X^0(T)+Z,P_V (X^0(T) + Z))v}{u}\\ 
			&\quad= \E\left[ \inpro[\dot{H}^{\eta}]{X^0(T)+Z}{u}\inpro{X^0(T)+Z}{v} \right] \\
			&\quad= \sum_{j=1}^\infty \E\left[ \inpro[\dot{H}^{\eta}]{X^0(T)+Z}{u}\inpro[\dot{H}^{\eta}]{X^0(T)+Z}{A^{-\eta}e_j} \right] \inpro{v}{e_j} \\ &\quad=  \sum_{j=1}^\infty \inpro[\dot{H}^{\eta}]{(Q(T) + \tilde Q) A^{\eta}A^{-\eta}e_j}{u} \inpro{v}{e_j}
			\\ &\quad=  \sum_{j=1}^\infty \inpro[\dot{H}^{\eta}]{(Q(T) + \tilde Q)e_j}{u} \inpro{v}{e_j} = \inpro{(Q(T) + \tilde Q) A^{\eta}u}{v} = \inpro[\dot{H}^{\eta}]{(Q(T) + \tilde Q)v}{u}
		\end{align*}
		so that
		\begin{equation*}
			\Cov(X^0(T)+Z,P_V (X^0(T) + Z)) = (Q(T) + \tilde Q)|_V \in \cL(V,\dot{H}^{\eta}),
		\end{equation*}
		which yields 
		\begin{equation*}
			\Cov(P_V (X^0(T) + Z),X^0(T)+Z) = P_V (Q(T) + \tilde Q) A^{\eta} \in \cL(\dot{H}^{\eta},V).
		\end{equation*}
		The interchange of expectation and summation in the second step above is justified by the convergence in $L^2(\Omega,\R)$ of the sum 
		\begin{equation*}
			\sum_{j=1}^\infty  \inpro[\dot{H}^{\eta}]{X^0(T)+Z}{A^{-\eta}e_j} \inpro{v}{e_j}.
		\end{equation*}
		Using the first part of  Proposition~\ref{app:inverse-bound} and the fact that $\dot{H}^{\alpha} \hookrightarrow \tilde{Q}^{1/2}(H)$, we now note that 
		\begin{equation}
			\label{eq:discrete-invertibility}
			\norm{(P_V Q(T)P_V +\tilde Q_V)^{\frac{1}{2}}P_Vv}^2= \norm{Q(T)^{\frac{1}{2}}P_Vv}^2 + \norm{\tilde{Q}^{\frac{1}{2}}P_Vv}^2 \ge C \norm{A^{-\frac{\alpha}{2}}P_V v}^2,
		\end{equation}
		for $v \in H$, where the constant $C$ does not depend on the specific choice of $V$. Therefore, the operators $(P_V Q(T)P_V +\tilde Q_V)^{1/2}$ and $P_V Q(T)P_V +\tilde Q_V$ are invertible on the finite-dimensional space $\cL(V)$. 
		Hence, by an argument similar to the one for $\Cov(P_V (X^0(T) + Z),X^0(T)+Z)$, we see that 
		\begin{equation*}
			B_V = {Q}_{V,H}(\cdot) S(T-\cdot) (P_V Q(T)P_V +\tilde Q_V)^{-1}.
		\end{equation*}
		Here ${Q}_{V,H}(\cdot) S(T-\cdot) \colon H \to H_T$ is defined by $({Q}_{V,H}(\cdot)S(T-\cdot) u) (t) = {Q}_{V,H}(t)S(T-t) u$ for $u \in H$, $t \in [0,T]$. This then implies that
		\begin{equation*}
			\cK_{V,H} = \left(((Q(T) + \tilde Q) A^{\eta})^{-\frac{1}{2}} (Q(T) + \tilde Q) (P_V Q(T)P_V +\tilde Q_V)^{-1} ({Q}_{V,H}(\cdot) S(T-\cdot))^* \right)^*.
		\end{equation*}
		
		Next, let $(f_j)_{j=1}^\infty \subset \dot{H}^{\eta}$ denote the eigenbasis of $\Cov(X^0(T) + Z)$ and $(z_j)_{j=1}^\infty$ a sequence of i.i.d.\ Gaussian random variables. Then, by Lemma~\ref{lem:conditional-lemma} and~\eqref{eq:gamma-rad-norm}, 
		\begin{equation*}
			\norm[L^2(\Omega,\cC_T)]{\cE_V} = \E\Bigg[\Bignorm[\cC_T]{\sum_{j=1}^\infty z_j (\cK - \cK_{V,H}) f_j}^2\Bigg] = \norm[\gamma(\dot{H}^{\eta},\cC_T)]{\cK - \cK_{V,H}}^2.
		\end{equation*}
		
		We now recall from the proof of Lemma~\ref{lem:K-is-radonifying} that $\cK=\cJ^1 \cJ^0$. Similarly, $\cK_{V,H}$ can be factored into $\cK_{V,H} = \cJ_{V}^1 \cJ_{V,H}^0$, where $\cJ_{V}^1 \colon H_T \to \cC_T$ is given by 
		\begin{equation}
			\label{eq:JV1}
			(\cJ_{V}^1 v)(t) := \int^t_0 S_V(t-s) P_V Q^{\frac{1}{2}} v(s) \dd s
		\end{equation}
		while $\cJ_{V,H}^0 \colon \dot{H}^{\eta} \to H_T$ is given by
		\begin{equation}
			\label{eq:JVH-def}
			\begin{split}
				(\cJ_{V,H}^0 v) (t) :=& Q^{\frac{1}{2}} S(T-t) (P_V Q(T)P_V +\tilde Q_V)^{-1} P_V (Q(T) + \tilde Q)A^{\eta} ((Q(T) + \tilde Q) A^{\eta})^{-1/2} v \\
				=& Q^{\frac{1}{2}} S(T-t) (P_V Q(T)P_V +\tilde Q_V)^{-1} P_V ((Q(T) + \tilde Q) A^{\eta})^{1/2} v
			\end{split}
		\end{equation}
		for $v \in \dot{H}^{\eta}$ and almost every $t \in [0,T]$. By invariance of the reproducing kernel Hilbert space, $((Q(T) + \tilde Q) A^{\eta})^{1/2}(\dot{H}^{\eta}) = ((Q(T) + \tilde Q))^{1/2}(H) \subset H$, so that $\cJ_{V,H}^0$ is well-defined as an operator in $\cL(\dot{H}^{\eta},H_T)$.
		We use these factorizations and the ideal property of the $\gamma$-radonifying norm 
		to make the split
		\begin{equation}
			\label{eq:lem:abstract-error-lem-1:split}
			\norm[\gamma(\dot{H}^{\eta},\cC_T)]{\cK - \cK_{V,H}} \le \norm[\gamma(H_T,\cC_T)]{\cJ^1-\cJ_{V}^1} \norm[\cL(\dot{H}^{\eta},H_T)]{\cJ^0} + \norm[\gamma(H_T,\cC_T)]{\cJ_V^1} \norm[\cL(\dot{H}^{\eta},H_T)]{\cJ^0-\cJ_{V,H}^0}. 
		\end{equation} 
	\end{proofpart}	
	\begin{proofpart} % 2
		\label{lem:abstract-error-lem-1:pf:3}	
		We now derive bounds for the four terms of~\eqref{eq:lem:abstract-error-lem-1:split}. Note that the fact that $\norm[H_T]{\cJ^0 v} < \infty$ was shown in Lemma~\ref{lem:K-is-radonifying}, we move on to show that $\sup_{V \in (V_i)_{i \in \cI}} \norm[\gamma(H_T,\cC_T)]{\cJ_V^1} < \infty$. As in the proof of Lemma~\ref{lem:K-is-radonifying}, we have $\cJ_V^1(H_T) = {\bar Q_V}^{1/2}(H_T)$, where $\bar Q_V$ is the covariance operator of $X_V^0 \in L^2(\Omega,H_T)$ and $\bar Q_V^{1/2}(H_T)$ is the reproducing kernel Hilbert space of $X^0_V$ on $\cC_T$. Thus, by~\eqref{eq:gamma-rad-expectation-property} and Assumption~\ref{ass:abstract-approx}\ref{ass:abstract-approx:findim-bound-2}, there exists a constant $C<\infty$, independent of the specific choice of $V$, such that
		\begin{equation}
			\label{eq:JV1-bound}
			\norm[\gamma(H_T,\cC_T)]{\cJ_V^1}^2 = \E\left[\norm[\cC_T]{X^0_V(t)}^2\right] < C.
		\end{equation}
		
		For the term $\norm[\gamma(H_T,\cC_T)]{\cJ^1-\cJ_{V}^1}$, we note that by a straightforward calculation along with~\cite[Corollary~B.5]{DPZ14}, $(\cJ^1-\cJ_{V}^1)(H_T) = \Cov(X^0-X^0_V)^{1/2}(H)$. By the same arguments as for the term~$\norm[\gamma(H_T,\cC_T)]{\cJ_V^1}$, we therefore obtain 
		\begin{align*}
			\norm[\gamma(H_T,\cC_T)]{\cJ^1-\cJ_{V}^1}^2 = \E[\norm[\cC_T]{X^0-X_V^0}^2] 			&\lesssim \int_{0}^{T} t^{-\epsilon} \norm[\cL_2(Q^{\frac{1}{2}}(H),H)]{S(t)-S_V(t)P_V}^2 \dd t \\
			&\hspace{3em}+ \sup_{t \in [0,T]} \norm[\cL(\dot{H}^r,H)]{S(t)-S_V(t)P_V}^2,
		\end{align*}
		where we made use of Proposition~\ref{app:prop:cont-error} in the last step.
		
		From~\eqref{eq:sum-covariance-bounded-below} and Proposition~\ref{app:inverse-bound}, we find that
		\begin{equation}
			\label{eq:lem:abstract-error-lem-1:inverse-bound-1}
			\norm[\cL(\dot{H}^\alpha,\dot{H}^{\eta})]{\big((Q(T)  + \tilde Q)A^{\eta}\big)^{-\frac{1}{2}}} \le \norm[\cL(\dot{H}^\alpha,\dot{H}^{\eta})]{A^{-\frac{\eta - \alpha}{2}}} = 1.
		\end{equation}	
		Using this result along with the symmetry of $((Q(T) + \tilde Q) A^{\eta})^{-\frac{1}{2}}$ on $\dot{H}^{\eta}$, we obtain that for $u \in \dot{H}^{\alpha}$,
		\begin{align}
			\notag\norm[\dot{H}^{2 \eta-\alpha}]{((Q(T) + \tilde Q) A^{\eta})^{-\frac{1}{2}}u} &= \sup_{ \substack{ v \in \dot{H}^{2 \eta-\alpha} \\ \norm[\dot{H}^{2 \eta - \alpha}]{v} = 1}} \Big|\inpro[\dot{H}^{2 \eta-\alpha}]{((Q(T) + \tilde Q) A^{\eta})^{-\frac{1}{2}} u}{v}\Big| \\
			\label{eq:lem:abstract-error-lem-1:inverse-bound-2} &= \sup_{ \substack{ v \in \dot{H}^{2 \eta-\alpha} \\ \norm[\dot{H}^{2 \eta - \alpha}]{v} = 1}} \Big|\inpro[\dot{H}^{\eta}]{u}{((Q(T) + \tilde Q) A^{\eta})^{-\frac{1}{2}} A^{-\alpha + {\eta}} v}\Big| \\ 
			\notag &\le \norm[\dot{H}^{\eta}]{u} \norm[\cL(\dot{H}^{\alpha},\dot{H}^{\eta})]{((Q(T) + \tilde Q) A^{\eta})^{-\frac{1}{2}}}. 
		\end{align}
		Therefore, $((Q(T) + \tilde Q) A^{\eta})^{-1/2}$ extends to an operator in $\cL(\dot{H}^{\eta},\dot{H}^{2 \eta-\alpha})$, and we retain its notation.
		Finally, by~\eqref{eq:lem:abstract-error-lem-1:inverse-bound-2} and the commutativity $P_V$ and $\tilde Q$ from Assumption~\ref{ass:abstract-approx}\ref{ass:abstract-approx:commutativity}, we find that for $v \in \dot{H}^\eta$, 
		\begin{align*}
			\norm[H_T]{(\cJ^0-\cJ_{V,H}^0)v}^2 &= \int^T_0 \Big\|Q^{\frac{1}{2}} S(T-t) \Big((P_V Q(T)P_V +\tilde Q_V)^{-1} P_V (Q(T) + \tilde Q) - I \Big) \\ 
			&\hspace{4em} \times A^{\eta} ((Q(T) + \tilde Q) A^{\eta})^{-1/2} v\Big\|_{\cL(\dot{H}^{\eta},H)}^2 \dd t \\
			&\lesssim \left(\mathrm{I} + \mathrm{II}\right) \norm{A^{\eta - \frac{\alpha}{2}} ((Q(T) + \tilde Q) A^{\eta})^{-\frac{1}{2}} v}^2 \lesssim  \left(\mathrm{I} + \mathrm{II}\right) \norm[\dot{H}^\eta]{v}^2. 
		\end{align*}
		Here,
		\begin{align*}
			\mathrm{I} := \int^T_0 \Big\|Q^{\frac{1}{2}} S(T-t) (P_V Q(T)P_V +\tilde Q_V)^{-1} P_V Q(T) (I - P_V) A^{\frac{\alpha}{2}}\Big\|_{\cL(H)}^2 \dd t
		\end{align*}
		and
		\begin{align*}
			\mathrm{II} := \int^T_0 \Big\|Q^{\frac{1}{2}} S(T-t) \Big(I-P_V \Big) A^{\frac{\alpha}{2}}\Big\|_{\cL(H)}^2 \dd t.
		\end{align*}
		
		For the first of these two terms, we note that $(P_V Q(T)P_V +\tilde Q_V)^{-1/2}$, the inverse of $(P_V Q(T)P_V +\tilde Q_V)^{1/2} \in \cL(V)$, coincides with the pseudoinverse of $(P_V Q(T)P_V +\tilde Q_V)^{1/2} P_V \in \cL(H)$ when this is restricted to $V$. Since $(A^{-\frac{\alpha}{2}}P_V)^* = P_V A^{-\frac{\alpha}{2}}$, it follows from~\eqref{eq:discrete-invertibility} and Proposition~\ref{app:inverse-bound} that $\norm[\cL(H)]{(P_V Q(T)P_V +\tilde Q_V)^{-\frac{1}{2}} P_V A^{-\frac{\alpha}{2}}}$ is bounded by
		\begin{align*}
			C \norm[\cL(H)]{(P_V A^{-\frac{\alpha}{2}})^{-1} P_V A^{-\frac{\alpha}{2}}} = C \norm[\cL(H)]{P_{\ker(P_V A^{-\frac{\alpha}{2}})^\perp}} \le C,
		\end{align*}
		where $P_{\ker(P_V A^{-\frac{\alpha}{2}})^\perp} \colon H \to \ker(P_V A^{-\frac{\alpha}{2}})^\perp$ is the orthogonal projection onto $\ker(P_V A^{-\frac{\alpha}{2}})^\perp$. As a consequence, by symmetry of the involved operators, we also have 
		\begin{equation*}
			\norm[\cL(H)]{ A^{-\frac{\alpha}{2}}(P_V Q(T)P_V +\tilde Q_V)^{-\frac{1}{2}}P_V} = \norm[\cL(H)]{(P_V Q(T)P_V +\tilde Q_V)^{-\frac{1}{2}}P_V A^{-\frac{\alpha}{2}}} \le C.
		\end{equation*}
		Putting these two inequalities together, it follows that
		\begin{equation*}
			\norm[\cL(\dot{H}^\alpha,\dot{H}^{-\alpha})]{(P_V Q(T)P_V +\tilde Q_V)^{-1}P_V} = \norm[\cL(H)]{A^{-\frac{\alpha}{2}}(P_V Q(T)P_V +\tilde Q_V)^{-1}P_VA^{-\frac{\alpha}{2}}} \le C^2.
		\end{equation*}
		We use this result along with~\eqref{eq:QT-rho-bound} and the fact that $\alpha \le \rho$ to see that
		\begin{equation*}
			\mathrm{I} \lesssim \norm[\cL(H)]{A^{\frac{\alpha}{2}}Q(T) (I - P_V) A^{\frac{\alpha}{2}}}^2 \lesssim \norm[\cL(H)]{A^{-\frac{\rho}{2}}(I-P_V)A^{\frac{\alpha}{2}}}^2, 
		\end{equation*}
		and by Assumption~\ref{ass:rho} we obtain the same bound for $\mathrm{II}$. \qedhere
	\end{proofpart}
\end{proof}

\begin{remark}
	An alternative way of treating the term $\norm[\gamma(\dot{H}^{\eta},\cC_T)]{\cK - \cK_{V,H}}$ above would be to make the split 
	\begin{equation*}
		\norm[\gamma(\dot{H}^{\eta},\cC_T)]{\cK - \cK_{V,H}} \le \norm[\cL(H_T,\cC_T)]{\cJ^1-\cJ_{V}^1} \norm[\cL_2(\dot{H}^{\eta},H_T)]{\cJ^0} + \norm[\gamma(H_T,\cC_T)]{\cJ_V^1} \norm[\cL(\dot{H}^{\eta},H_T)]{\cJ^0-\cJ_{V,H}^0}. 
	\end{equation*} 
	It can be seen that $\norm[\cL_2(\dot{H}^{\eta},H_T)]{\cJ^0} < \infty$ provided that $\alpha \le \beta$. At the cost of this additional assumption, the bound on the error in Lemma~\eqref{lem:abstract-error-lem-1} can then be improved, since $\norm[\cL(H_T,\cC_T)]{\cJ^1-\cJ_{V}^1} \le \norm[\gamma(H_T,\cC_T)]{\cJ^1-\cJ_{V}^1}$.
\end{remark}

For the next lemma, we need an additional assumption. This lemma is only applied to the finite element approximation of Section~\ref{sec:approximation-fem}. For this approximation, the only natural example of an operator $\tilde Q$ that fulfills Assumption~\ref{ass:abstract-approx}\ref{ass:abstract-approx:commutativity} is a multiple of the identity operator $I$. This implies that $\alpha = 0$. With $\alpha = 0$, the assumption is already fulfilled and the proof of the lemma below can be simplified, but we choose to prove the result also for $\alpha >0$, to allow for possible generalizations to other approximation schemes.

\begin{assumption}
	\label{ass:abstract-approx-2}	
	The following two statements hold true. 
	\begin{enumerate}[label=(\roman*)]
		\item \label{ass:abstract-approx-2:Q-alpha} For all $V \in (V_i)_{i \in \cI}$, $V \subset \dot{H}^{\alpha}$ and
		\begin{equation*}
			\sup_{V \in (V_i)_{i \in \cI}} \int^T_0 \norm[\cL(Q^{\frac{1}{2}}(H),\dot{H}^\alpha)]{S_V(T-t)P_V}^2 \dd t = \sup_{V \in (V_i)_{i \in \cI}} \int^T_0 \norm[\cL(H)]{A^{\frac{\alpha}{2}}S_V(T-t)P_VQ^{\frac{1}{2}}}^2 \dd t < \infty.
		\end{equation*}
		\item \label{ass:abstract-approx-2:projection} The orthogonal projection $P_V$ satisfies 
		\begin{equation*}
			\sup_{V \in (V_i)_{i \in \cI}} \norm[\cL(\dot{H}^\alpha)]{P_V} = \sup_{V \in (V_i)_{i \in \cI}} \norm[\cL(H)]{A^{\frac{\alpha}{2}} P_V A^{-\frac{\alpha}{2}} } < \infty.
		\end{equation*}
	\end{enumerate}
\end{assumption}

\begin{remark}
	%\label{rem:abstract-approx-2:projection-neg}
	Under Assumption~\ref{ass:abstract-approx-2}\ref{ass:abstract-approx-2:projection}, it can be seen that $P_V$ extends to $\dot{H}^{-\alpha}$. We make no notational distinction between $P_V$ and its extension, which coincides with the generalized orthogonal projection, defined by the equation~$\inpro{P_V u}{v} = \inpro{A^{-\alpha/2}u}{A^{\alpha/2}v}$ for all $u \in \dot{H}^{-\alpha}, v \in V$, and fulfills $\norm[\cL(\dot{H}^{-\alpha})]{P_V} = \norm[\cL(\dot{H}^{\alpha})]{P_V}$.
\end{remark}

\begin{lemma}
	\label{lem:abstract-error-lem-2}
	Let Assumptions~\ref{ass:Q}, \ref{ass:Z}, \ref{ass:rho}, \ref{ass:abstract-approx} and~\ref{ass:abstract-approx-2} be satisfied. If $\alpha \le \rho$, then for all $p \ge 1$, there exists a constant $C < \infty$ such that for all $V \in (V_i)_{i \in \cI}$,
	\begin{align*}
		&\norm[L^p(\Omega,\cC_T)]{\E[X_V^{0} | P_V (X^0(T) + Z)] - \E[X_V^{0} | X_V^{0} + P_V Z]} \\ &\quad\le C\left(\int^T_0 \norm[\cL(Q^{\frac{1}{2}}(H),\dot{H}^{\alpha})]{S(T-t) - S_V(T-t)P_V}^2 \dd t\right)^\frac{1}{2}.
	\end{align*}
\end{lemma}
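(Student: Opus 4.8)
The plan is to follow the architecture of the proof of Lemma~\ref{lem:abstract-error-lem-1}, turning a $\cC_T$-valued estimate into a Hilbert-space one by always peeling off the operator $\cJ_V^1$ from~\eqref{eq:JV1}, which by~\eqref{eq:JV1-bound} is $\gamma$-radonifying uniformly in $V$. Write $G := P_V(X^0(T)+Z)$, $G_V := X_V^0(T)+P_VZ$ (both $V$-valued) and let $\mathcal{B} := \big(\int_0^T\norm[\cL(Q^{1/2}(H),\dot H^{\alpha})]{S(T-t)-S_V(T-t)P_V}^2\dd t\big)^{1/2}$ be the right-hand side of the lemma. Exactly as in Part~\ref{lem:abstract-error-lem-1:pf:1}, the triple $(X_V^0,X^0(T),Z)$ is jointly Gaussian, hence so is $(X_V^0,G,G_V)$. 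Since $V$ is finite-dimensional, $\Cov(G)=P_VQ(T)P_V+\tilde Q_V$ (invertible on $V$ by~\eqref{eq:discrete-invertibility}) and $\Cov(G_V)=Q_V(T)+\tilde Q_V$ (invertible on $V$ by the same computation) are invertible, so Theorem~\ref{thm:conditional-gaussian}, combined with the cross-covariance identities from the proof of Lemma~\ref{lem:abstract-error-lem-1} and their analogues for the pair $(X_V^0,X_V^0(T))$, gives $\E[X_V^0\mid G]=M_1G$ and $\E[X_V^0\mid G_V]=M_2G_V$ with $M_1:=Q_{V,H}(\cdot)S(T-\cdot)\Cov(G)^{-1}$, $M_2:=Q_V(\cdot)S_V(T-\cdot)\Cov(G_V)^{-1}$ in $\cL(V,\cC_T)$. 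In particular $\cE_V:=M_1G-M_2G_V$ is a $\cC_T$-valued Gaussian random variable, so by~\cite[Proposition~3.14]{H09} it suffices to bound $\norm[L^2(\Omega,\cC_T)]{\cE_V}$.

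I would then split
\begin{equation*}
	\cE_V = M_1(G-G_V) + (M_1-M_2)G_V,\qquad G-G_V = \int_0^T P_V\big(S(T-s)-S_V(T-s)P_V\big)\dd W(s),
\end{equation*}
the second identity using $P_VS_V(T-s)P_V=S_V(T-s)P_V$. For $M_1(G-G_V)$: pulling the bounded operator $M_1(t)$ inside the integral exhibits $t\mapsto M_1(t)(G-G_V)$ as a $\cC_T$-valued stochastic integral; factoring $M_1=\cJ_V^1N_1$ with $N_1\colon V\to H_T$, $(N_1u)(s):=Q^{1/2}S(T-s)\Cov(G)^{-1}u$, and invoking the $\gamma$-radonifying characterization of such integrals together with the ideal property, one bounds $\norm[L^2(\Omega,\cC_T)]{M_1(G-G_V)}$ by $\norm[\gamma(H_T,\cC_T)]{\cJ_V^1}$ --- uniformly bounded, by~\eqref{eq:JV1-bound} --- times a genuinely Hilbert-space quantity which, after using~\eqref{eq:discrete-invertibility} and Assumption~\ref{ass:rho}, is a multiple of $\mathcal{B}$. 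The whole point of peeling off $\cJ_V^1$ first is that it reduces everything to \emph{operator}-norm estimates of $S(T-\cdot)-S_V(T-\cdot)P_V$, so --- unlike in Lemma~\ref{lem:abstract-error-lem-1} --- neither a Hilbert--Schmidt norm nor a time weight $t^{-\epsilon}$ is needed.

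For the operator difference I would expand
\begin{equation*}
	M_1-M_2 = \Cov\big(X_V^0(\cdot),X^0(T)-X_V^0(T)\big)\Cov(G)^{-1} + \Cov\big(X_V^0(\cdot),X_V^0(T)\big)\Cov(G)^{-1}\big(Q_V(T)-P_VQ(T)P_V\big)\Cov(G_V)^{-1},
\end{equation*}
where $Q_V(T)-P_VQ(T)P_V = \Cov\big(\tfrac{}{}\eta_V\big)+\Cov\big(\eta_V,P_VX^0(T)\big)+\Cov\big(P_VX^0(T),\eta_V\big)$ with $\eta_V:=X_V^0(T)-P_VX^0(T)$. Every operator here involving $X^0(T)-X_V^0(T)$ or $\eta_V$ has operator norm $\lesssim\mathcal{B}$ or $\lesssim\mathcal{B}^2$: indeed $\norm[\cL(H)]{\int_0^T\Phi(s)Q\Psi(s)^*\dd s}\le\int_0^T\norm[\cL(H)]{\Phi(s)Q^{1/2}}\,\norm[\cL(H)]{\Psi(s)Q^{1/2}}\dd s$, and by the self-adjointness on $H$ of $S(T-s)-S_V(T-s)P_V$ and of $S_V(T-s)P_V$ the relevant integrands reduce to $\norm[\cL(Q^{1/2}(H),H)]{S(T-s)-S_V(T-s)P_V}$ (suitably weighted by $A^{\alpha/2}$) times factors integrable by Assumption~\ref{ass:rho}; the remaining factors $\Cov(G)^{-1}$, $\Cov(G_V)^{-1}$, $\Cov(X_V^0(\cdot),X_V^0(T))$ are bounded uniformly in $V$ by~\eqref{eq:discrete-invertibility}, Assumption~\ref{ass:abstract-approx} and Assumption~\ref{ass:abstract-approx-2}. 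Writing $\Cov(X_V^0(\cdot),\cdot)$ once more as $\cJ_V^1$ composed with a Hilbert-space map and assembling with the triangle inequality then yields the claim.

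The main obstacle is obtaining the bound in exactly this clean form --- the operator norm of the semigroup error, weighted into $\dot H^{\alpha}$, with no time singularity. This forces one never to estimate $\cJ_V^1$-type operators crudely in $\cC_T$ but always to factor them out first, and --- when $\alpha>0$ --- to place the fractional powers $A^{\pm\alpha/2}$ so that the weight $\tilde Q_V^{-1}$ hidden inside $\Cov(G)^{-1}$ and $\Cov(G_V)^{-1}$ can be traded for $A^{\alpha/2}$ via $\dot H^{\alpha}\hookrightarrow\tilde Q^{1/2}(H)$ (Assumption~\ref{ass:Z}\ref{ass:Z:alpha}) and Proposition~\ref{app:inverse-bound}, and commuted past $P_V$ using $P_V\tilde Q=\tilde QP_V$ (Assumption~\ref{ass:abstract-approx}\ref{ass:abstract-approx:commutativity}) together with $\sup_{V}\norm[\cL(\dot H^{\alpha})]{P_V}<\infty$ (Assumption~\ref{ass:abstract-approx-2}\ref{ass:abstract-approx-2:projection}), while $\int_0^T\norm[\cL(H)]{A^{\alpha/2}S_V(T-t)P_VQ^{1/2}}^2\dd t<\infty$ (Assumption~\ref{ass:abstract-approx-2}\ref{ass:abstract-approx-2:Q-alpha}) and $\alpha\le\rho$ keep the complementary factors finite. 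All of this $\alpha>0$ bookkeeping collapses when $\alpha=0$, which, as noted before the lemma, is the case actually needed.
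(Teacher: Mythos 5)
Your proposal is correct and follows essentially the same route as the paper's proof: both conditional expectations are represented via Theorem~\ref{thm:conditional-gaussian}, the common factor $\cJ_V^1$ is peeled off and controlled uniformly through~\eqref{eq:JV1-bound} and the ideal property of $\gamma$-radonifying operators, and the remainder is reduced to operator-norm estimates of the semigroup and covariance errors using~\eqref{eq:discrete-invertibility}, Proposition~\ref{app:inverse-bound}, Assumptions~\ref{ass:rho} and~\ref{ass:abstract-approx-2}, and a final H\"older argument. The only deviation is cosmetic: you split $\cE_V = M_1(G-G_V)+(M_1-M_2)G_V$ and express $M_1-M_2$ via a resolvent identity, whereas the paper applies the data difference to the discrete-side operator $\tilde\cJ^0_V$ and bounds $\tilde\cJ^0_V-\cJ^0_V$ directly, expanding $\Cov(\tilde\cE_V)$ including its cross term.
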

\begin{proof}
	As in the proof of Lemma~\ref{lem:abstract-error-lem-1}, we start by showing the first claim for $p = 2$, i.e., we derive a bound on
	\begin{align*}
		\cE_V :=& \E[X_V^{0} | P_V (X^0(T) + Z)] - \E[X_V^{0} | X_V^0(T) + P_V Z]
	\end{align*}
	in $L^2(\Omega,\cC_T)$. The general claim then follows from the Gaussian law of this random variable, which follows from a similar argument as in the proof of Lemma~\ref{lem:abstract-error-lem-1}. The proof is split into two parts. In the first part, we use a factorization argument to derive a bound on $\norm[L^2(\Omega,\cC_T)]{\cE_V}$ which we further analyze in the second part.
	\begin{proofpart}
		We recall from the proof of Lemma~\ref{lem:abstract-error-lem-1} that
		\begin{equation*}
			\E[X_V^{0} | P_V (X^0(T) + Z)] = Q_{V,H}(\cdot)S(T-\cdot) (P_V Q(T) P_V + \tilde Q_V)^{-1} P_V (X^0(T) + Z).
		\end{equation*}
		and note that, as another consequence of Theorem~\ref{thm:conditional-gaussian},
		\begin{align*}
			\E[X_V^{0} | X_V^0(T) + P_V Z] &= \Cov(X_V^0,X_V^0(T) + P_V Z) \Cov(X_V^0(T) + P_V Z)^{-1} \\
			&=  {Q}_{V}(\cdot)S_V(T-\cdot) ({Q}_V(T) + \tilde Q_V)^{-1} (X_V^0(T) + P_V Z).
		\end{align*}
		By the same factorization argument as in the proofs of Lemmas~\ref{lem:K-is-radonifying} and~\ref{lem:abstract-error-lem-1}, we may write 
		\begin{equation*}
			\E[X_V^{0} | P_V (X^0(T) + Z)]  = \cJ^1_V \cJ^0_V P_V (X^0(T) + Z).
		\end{equation*}
		and 		
		\begin{equation*}
			\E[X_V^{0} | X_V^0(T) + P_V Z] = \cJ^1_V \tilde \cJ^0_V (X_V^0(T) + P_V Z).
		\end{equation*}
		Here $\cJ^1_V$ is given by~\eqref{eq:JV1} while $\cJ^0_V, \tilde \cJ^0_V \in \cL(H,H_T)$ are given by  
		\begin{equation*}
			(\cJ^0_V v)(t) := Q^{\frac{1}{2}} S(T-t) (P_V Q(T) P_V + \tilde Q_V)^{-1} P_V v
		\end{equation*}
		and
		\begin{equation*}
			(\tilde \cJ^0_V v)(t) := Q^{\frac{1}{2}} S_V(T-t) ({Q}_V(T) + \tilde Q_V)^{-1} P_V v
		\end{equation*}
		for $v \in H$ and almost every $t \in [0,T]$. The fact that $V$ is finite-dimensional ensures that these operators are well-defined. Let us write
		\begin{align*}
			\tilde \cE_V :=& \tilde \cJ^0_V (X_V^0(T) + P_V Z) - \cJ^0_V P_V (X^0(T) + Z) \\
			=& \tilde \cJ^0_V P_V (X_V^0(T) - X^0(T)) + (\tilde \cJ^0_V - \cJ^0_V) P_V (X^0(T) + Z).
		\end{align*}
		Since $\Cov(\cE_V) = (\cJ^1_V\Cov(\tilde \cE_V)^{1/2})(\cJ^1_V\Cov(\tilde \cE_V)^{1/2})^*$, Corollary~B.5 in \cite{DPZ14} yields the result $\cJ^1_V\Cov(\tilde \cE_V)^{1/2}(H_T) = \Cov(\cE_V)^{1/2}(H_T)$. Note also, that $(\cJ^1_V\Cov(\tilde \cE_V)^{1/2} f_j)_{j=1}^\infty$ is an orthonormal basis of $\cJ^1_V\Cov(\tilde \cE_V)^{1/2}(H_T)$ when $(f_j)_{j=1}^\infty$ is an orthonormal basis of $H_T$. Using this, along with~\eqref{eq:gamma-rad-expectation-property}, the ideal property of the $\gamma$-radonifying operators and the bound~\eqref{eq:JV1-bound} obtained in the proof of Lemma~\ref{lem:abstract-error-lem-1}, we see that, for an arbitrary sequence $(z_j)_{j=1}^\infty$ of i.i.d.\ Gaussian random variables,
		\begin{align*}
			\norm[L^2(\Omega,\cC_T)]{\cE_V}^2 &= \norm[\gamma(\Cov(\cE_V)^{\frac{1}{2}}(H_T), \cC_T )]{I_{\Cov(\cE_V)^{\frac{1}{2}}(H_T) \hookrightarrow \cC_T}}^2 \\
			&= \norm[\gamma(\cJ^1_V\Cov(\tilde \cE_V)^{\frac{1}{2}}(H_T), \cC_T )]{I_{\cJ^1_V\Cov(\tilde \cE_V)^{\frac{1}{2}}(H_T) \hookrightarrow \cC_T}}^2 \\
			&= \E\Bigg[\Bignorm[\cC_T]{\sum_{j=1}^\infty z_j \cJ^1_V\Cov(\tilde \cE_V)^{\frac{1}{2}} f_j}^2\Bigg] = \norm[\gamma(H_T,\cC_T)]{\cJ^1_V\Cov(\tilde \cE_V)^{\frac{1}{2}}}^2 \lesssim \norm[\cL(H_T)]{\Cov(\tilde \cE_V)}.
		\end{align*}
	\end{proofpart}
	
	\begin{proofpart}
		We split the term $\norm[\cL(H_T)]{\Cov(\tilde \cE_V)}$ using the independence of $Z$ and $W$, by  
		\begin{align*}
			\norm[\cL(H_T)]{\Cov(\tilde \cE_V)} &\le \norm[\cL(H_T)]{\tilde \cJ^0_V \Cov(X_V^0(T) - X^0(T)) (\tilde \cJ^0_V)^* } \\
			&\quad + 2 \norm[\cL(H_T)]{\tilde \cJ^0_V \Cov(X_V^0(T) - X^0(T),X^0(T)) (\tilde \cJ^0_V - \cJ^0_V)^* } \\
			&\quad + \norm[\cL(H_T)]{(\tilde \cJ^0_V - \cJ^0_V) \Cov(P_V (X^0(T) + Z)) (\tilde \cJ^0_V - \cJ^0_V)^*} =: \mathrm{I} + \mathrm{II} + \mathrm{III}.
		\end{align*}
		Next, for $v \in H$, we may write 
		\begin{align*}
			\norm[H_T]{\tilde \cJ^0_V v - \cJ^0_V v}^2 &= \int^T_0 \big\|Q^{\frac{1}{2}} \Big(S_V(T-t) (Q_{V}(T) + \tilde Q_V)^{-1} (Q_V(T) - P_V Q(T) P_V) \\ 
			&\hspace{4em}- \big(S(T-t) - S_V(T-t)\big) \Big) (P_V Q(T) P_V + \tilde Q_V)^{-1} P_V v\big\|^2 \dd t. 
		\end{align*}
		We already noted in the proof of Lemma~\ref{lem:abstract-error-lem-1} that as a consequence of Proposition~\ref{app:inverse-bound},
		\begin{equation*}
			%\label{eq:abstract-error-lem-2:bound-3}
			\norm[\cL(\dot{H}^{\alpha},\dot{H}^{-\alpha})]{(P_V Q(T) P_V + \tilde Q_V)^{-1}P_V} \le C,
		\end{equation*}
		where the constant $C$ does not depend on the choice of $V$. In a similar way, we obtain the bound
		\begin{equation*}
			%\label{eq:abstract-error-lem-2:bound-1}
			\norm[\cL(\dot{H}^{\alpha},\dot{H}^{-\alpha})]{(Q_{V}(T) + \tilde Q_V)^{-1}P_V} \le C.
		\end{equation*}
		Using these two results along with Assumptions~\ref{ass:rho} and~\ref{ass:abstract-approx-2}, it follows that 
		\begin{equation*}
			\norm[\cL(\dot{H}^\alpha,H_T)]{\tilde \cJ^0_V - \cJ^0_V}^2 \lesssim
			\norm[\cL(\dot{H}^{-\alpha},\dot{H}^{\alpha})]{Q(T)-Q_V(T)}^2 + \int^T_0 \norm[\cL(Q^{\frac{1}{2}}(H),\dot{H}^\alpha)]{S(T-t) - S_V(T-t)P_V}^2 \dd t. 
		\end{equation*}
		Similarly, there is a constant $C< \infty$, independent of $V$, such that $\norm[\cL(\dot{H}^\alpha,H_T)]{\tilde \cJ^0_V} < C$.
		
		With these estimates in place, we move on to estimate the terms $\mathrm{I}$ and $\mathrm{II}$. Since
		\begin{align*}
			\Cov(X_V^0(T) - X^0(T)) = \int^T_0 (S_V(t)P_V - S(t)) Q (S_V(t)P_V - S(t)) \dd t,
		\end{align*}
		the previously obtained bounds imply that
		\begin{equation*}
			\mathrm{I} \lesssim \int^T_0 \norm[\cL(H)]{Q^{\frac{1}{2}}(S_V(t)P_V - S(t))A^{\frac{\alpha}{2}}}^2 \dd t = \int^T_0 \norm[\cL(H)]{A^{\frac{\alpha}{2}}(S(T-t) - S_V(T-t)P_V)Q^{\frac{1}{2}}}^2 \dd t.
		\end{equation*}
		Similarly, from the fact that 
		\begin{align*}
			\Cov(X_V^0(T) - X^0(T), X^0(T)) = Q_{V,H}(T) -  Q(T),
		\end{align*}
		it follows that
		\begin{align*}
			\mathrm{II} &\lesssim  \norm[\cL(\dot{H}^{-\alpha},\dot{H}^\alpha)]{Q(T)-Q_{V,H}(T)} \\
			&\qquad\times \left(\norm[\cL(\dot{H}^{-\alpha},\dot{H}^{\alpha})]{Q(T)-Q_V(T)}^2 + \int^T_0 \norm[\cL(H)]{A^{\frac{\alpha}{2}}(S(T-t) - S_V(T-t)P_V)Q^{\frac{1}{2}}}^2 \dd t\right)^\frac{1}{2}.
		\end{align*}
		
		Finally, since $\Cov(P_V (X^0(T) + Z)) = (P_V Q(T) P_V + \tilde Q_V)$, we note that for $v \in H$, 
		\begin{align*}
			&\norm[H_T]{(\tilde \cJ^0_V - \cJ^0_V) \Cov(P_V (X^0(T) + Z)) P_V v}^2 \\ &\quad= \int^T_0 \big\|Q^{\frac{1}{2}} \Big(S_V(T-t) (Q_{V}(T) + \tilde Q_V)^{-1} (Q_V(T) - P_V Q(T) P_V) \\ &\hspace{15em}- \big(S(T-t) - S_V(T-t)\big) \Big) P_V v\big\|^2 \dd t. 
		\end{align*}
		Using the estimates obtained above, we therefore have 
		\begin{equation*}
			\mathrm{III} \lesssim \norm[\cL(\dot{H}^{-\alpha},\dot{H}^{\alpha})]{Q(T)-Q_V(T)}^2 + \int^T_0 \norm[\cL(H)]{A^{\frac{\alpha}{2}}(S(T-t) - S_V(T-t)P_V)Q^{\frac{1}{2}}}^2 \dd t.
		\end{equation*}
		
		The proof is now completed by noting that, by H\"older's inequality and Assumption~\ref{ass:rho}, 
		\begin{align*}
			&\norm[\cL(\dot{H}^{-\alpha},\dot{H}^\alpha)]{Q(T)-Q_{V,H}(T)} \\
			&\quad\le \int^T_0 \norm[\cL(H)]{A^{\frac{\alpha}{2}}(S(T-t) - S_V(T-t)P_V)Q^{\frac{1}{2}}} \norm[\cL(H)]{A^{\frac{\alpha}{2}}S(T-t)Q^{\frac{1}{2}}} \dd t \\
			&\quad\lesssim \left(\int^T_0 \norm[\cL(H)]{A^{\frac{\alpha}{2}}(S(T-t) - S_V(T-t)P_V)Q^{\frac{1}{2}}}^2 \dd t\right)^\frac{1}{2},
		\end{align*}
		along with an analogous bound for the term $\norm[\cL(\dot{H}^{-\alpha},\dot{H}^{\alpha})]{Q(T)-Q_V(T)}$, where also Assumption~\ref{ass:abstract-approx-2} is needed.\qedhere
	\end{proofpart}
\end{proof}
	
\section{Spectral approximation under commutative observation noise}
\label{sec:spectral-approximation}

Next, we apply the results of the previous section to a spectral approximation of the SPDE bridge $X^{x,y}$. This is to say, for $N \in \N$, we set $V = V_N :=\mathrm{span}\{e_1, \ldots, e_N\}$, $S_{V_N} := S_N := P_N S = S P_N$, where we write $P_N$ for the projection $P_{V_N}$, $Q_N := Q_{V_N}$, $\tilde Q_N:=\tilde Q_{V_N}$ and we let the approximation $X_N^{x,y}:=X_{V_N}^{x,y}$ be given by~\eqref{eq:abstract-discrete-bridge-def}. Writing also $X^{x}_N$ for $X^{x}_{V_N}$, the spectral approximation~\eqref{eq:abstract-discrete-spde-def} of $X^{x}$, this has in this setting the special property that $X^{x}_N = P_N X^{x}$ for all $x \in H$. Similarly, $Q_N(t) = P_N Q(t) P_N$ for all $t \in [0,T]$. 

\begin{remark}
	%\label{rem:spectral-convergence}
	In this setting, Assumption~\ref{ass:abstract-approx}\ref{ass:abstract-approx:findim-bound-1} is automatically satisfied, Assumption~\ref{ass:abstract-approx}\ref{ass:abstract-approx:findim-bound-2} is implied by Assumption~\ref{ass:Q} while Assumption~\ref{ass:abstract-approx}\ref{ass:abstract-approx:commutativity} is equivalent to $\tilde Q$ and $A$ sharing a common eigenbasis.
\end{remark}

\begin{remark}
	\label{rem:tilde-mu-spectral}
	Under Assumption~\ref{ass:abstract-approx}\ref{ass:abstract-approx:commutativity}, write $(\tilde \mu_j)_{j=1}^\infty$ for the sequence of eigenvalues of $\tilde{Q}$ corresponding to the eigenbasis $(e_j)_{j=1}^\infty$. Then, by Proposition~\ref{app:inverse-bound},  Assumption~\ref{ass:Z}\ref{ass:Z:alpha} is equivalent to the existence of a constant $C > 0$ such that $\inf_j \lambda_j^{\alpha} \tilde \mu_j \ge C$. 
\end{remark}

We now derive error bounds for $X^{x,y}_N$ as $N \to \infty$. They are obtained from the fact that
\begin{equation}
	\label{eq:projection-error}
	\norm[\cL(H)]{(I-P_N)A^{-r}} = \norm[\cL(H)]{A^{-r}(I-P_N)} \le \lambda_{N+1}^{-r}
\end{equation}
for all $r \ge 0$ and $N \in \N$. 

\begin{theorem}
	\label{thm:spectral-convergence}
	Let Assumptions~\ref{ass:Q}, \ref{ass:Z}, \ref{ass:rho} and~\ref{ass:abstract-approx} be satisfied with $\alpha < \rho$ and let $\chi > 0$. Then, for a Borel subset $\cM \subset \dot{H}^{\eta}$ with $\mu_T(\cM) = 1$ and all $x \in \dot{H}^\chi$, $y \in \cM$, $p \ge 1$  and $r < \min(\rho-\alpha,\beta,\chi)$, there is an $M \in \N$ and a constant $C < \infty$ such that for all $N \ge M$,
	\begin{equation*}
		\norm[L^p(\Omega,\cC_T)]{X_N^{x,y} - X^{x,y}} \le C \lambda_{N+1}^{-\frac{r}{2}}.
	\end{equation*}
\end{theorem}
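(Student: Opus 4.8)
The plan is to subtract the two representations~\eqref{eq:prop:bridge-existence:existence-2}--\eqref{eq:prop:bridge-existence:existence-3} of $X^{x,y}$ from the discrete analogue obtained from~\eqref{eq:abstract-discrete-bridge-def}, and to isolate three contributions. In the spectral setting one has $X_N^0=P_NX^0$, $S_N(t)P_N=P_NS(t)$, $Q_N(t)=P_NQ(t)P_N$ and, most importantly, $X_N^0(T)+P_NZ=P_N(X^0(T)+Z)$, so that $\hat X_N^0:=\E[X_N^0\mid X_N^0(T)+P_NZ]=\E[X_N^0\mid P_N(X^0(T)+Z)]$ and the term bounded in Lemma~\ref{lem:abstract-error-lem-2} vanishes identically. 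Writing $G(t):=K(t)((Q(T)+\tilde Q)A^{\eta})^{-\frac12}$ and $G_N(t):=Q_N(t)S_N(T-t)(Q_N(T)+\tilde Q_N)^{-1}$, one then gets
\begin{multline*}
	X_N^{x,y}(t)-X^{x,y}(t)=\underbrace{(P_N-I)S(t)x}_{(\mathrm{I})(t)}+\underbrace{\bigl(G_N(t)P_N-G(t)\bigr)(y-S(T)x)}_{(\mathrm{II})(t)}\\
	{}+\underbrace{\bigl(X_N^0(t)-X^0(t)\bigr)-\bigl(\hat X_N^0(t)-\hat X^0(t)\bigr)}_{(\mathrm{III})(t)},
\end{multline*}
where $(\mathrm{I})$ is deterministic, $(\mathrm{II})$ is deterministic once $y$ is fixed, and $(\mathrm{III})$ is a $\cC_T$-valued Gaussian random variable depending on neither $x$ nor $y$.

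The contributions $(\mathrm{I})$ and $(\mathrm{III})$ are routine, using the projection estimate~\eqref{eq:projection-error} and Lemma~\ref{lem:abstract-error-lem-1}. For $(\mathrm{I})$, $(P_N-I)S(t)x=-(I-P_N)A^{-\chi/2}A^{\chi/2}S(t)x$ and $\norm[\cL(\dot{H}^{\chi})]{S(t)}\le1$ give $\sup_t\norm{(\mathrm{I})(t)}\le\lambda_{N+1}^{-\chi/2}\norm[\dot{H}^{\chi}]{x}$. For $(\mathrm{III})$, $X_N^0-X^0=-(I-P_N)X^0$ yields $\norm[L^p(\Omega,\cC_T)]{X_N^0-X^0}\lesssim\lambda_{N+1}^{-r/2}$ for $r<\beta$, and Lemma~\ref{lem:abstract-error-lem-1} bounds $\hat X_N^0-\hat X^0$ by the square root of three terms which, by~\eqref{eq:projection-error}, the integrability statement following Assumption~\ref{ass:Q}, Assumption~\ref{ass:rho} and~\eqref{eq:QT-rho-bound}, satisfy $\int_0^Tt^{-\epsilon}\norm[\cL_2(Q^{1/2}(H),H)]{S(t)-S_N(t)P_N}^2\dd t\le\lambda_{N+1}^{-r}\int_0^Tt^{-\epsilon}\norm[\cL_2(H)]{A^{r/2}S(t)Q^{1/2}}^2\dd t$, $\sup_t\norm[\cL(\dot{H}^r,H)]{S(t)-S_N(t)P_N}^2\lesssim\lambda_{N+1}^{-r}$ (for $r<\beta$) and $\norm[\cL(\dot{H}^{-\alpha},\dot{H}^{-\rho})]{I-P_N}^2\le\lambda_{N+1}^{-(\rho-\alpha)}$; hence $\norm[L^p(\Omega,\cC_T)]{\hat X_N^0-\hat X^0}\lesssim\lambda_{N+1}^{-r/2}$ for all $p\ge1$ and all $r<\min(\rho-\alpha,\beta)$. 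Being Gaussian in $\cC_T$, $(\mathrm{III})$ inherits this $L^p$-rate.

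For $(\mathrm{II})$ I split $y-S(T)x$ into the smooth vector $w:=S(T)x\in\dot{H}^{\alpha}$ (with $\norm[\dot{H}^{\alpha}]{w}\lesssim\norm{x}$) and the rough $y$. On $\dot{H}^{\alpha}$ one has $G_N(\cdot)P_N-G(\cdot)=(\cK_{N,H}-\cK)((Q(T)+\tilde Q)A^{\eta})^{-1/2}$, as in the proof of Lemma~\ref{lem:abstract-error-lem-1}; combining~\eqref{eq:lem:abstract-error-lem-1:inverse-bound-1}, the general inequality $\norm[\cL(\dot{H}^{\eta},\cC_T)]{\Gamma}\le\norm[\gamma(\dot{H}^{\eta},\cC_T)]{\Gamma}$ (apply~\eqref{eq:gamma-rad-norm} to a single basis vector) and the estimate for $\norm[\gamma(\dot{H}^{\eta},\cC_T)]{\cK-\cK_{N,H}}$ established inside that proof gives $\sup_t\norm{(G_N(t)P_N-G(t))S(T)x}\lesssim\norm{x}\lambda_{N+1}^{-r/2}$. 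The term $(G_N(\cdot)P_N-G(\cdot))y$ for fixed $y$ is the heart of the matter: since $((Q(T)+\tilde Q)A^{\eta})^{-1/2}y\notin\dot{H}^{\eta}$ for generic $y$, it has no deterministic estimate with a rate. Instead I use that $(G_N(\cdot)P_N-G(\cdot))(X^0(T)+Z)=\hat X_N^0-\hat X^0$ $\IP$-a.s., so $\sup_t\norm[\cC_T]{(G_N(t)P_N-G(t))y}=\norm[\cC_T]{\hat X_N^0-\hat X^0}$ when $y$ is a realization of $X^0(T)+Z$; since the $L^p$-rate above holds for \emph{every} $p\ge1$ and $\sum_j\lambda_j^{-\zeta}<\infty$, Markov's inequality with $p$ large makes $\sum_N\IP(\norm[\cC_T]{\hat X_N^0-\hat X^0}>\lambda_{N+1}^{-r/2})$ summable for $r<\min(\rho-\alpha,\beta)$, and Borel--Cantelli produces a Borel set $\cM\subset\dot{H}^{\eta}$ with $\mu_T(\cM)=1$ (intersected with the set $\cM$ of Proposition~\ref{prop:bridge-existence}, keeping $X^{x,y}$ well defined) on which, for every $y\in\cM$ and every such $r$, there is $M=M(y)$ with $\sup_t\norm[\cC_T]{(G_N(t)P_N-G(t))y}\le\lambda_{N+1}^{-r/2}$ for $N\ge M$. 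Collecting the three bounds and using $r<\min(\rho-\alpha,\beta,\chi)$ gives the claim.

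The main obstacle is precisely the $y$-dependent part of $(\mathrm{II})$: for a fixed, possibly rough conditioning value $y$ there is no direct deterministic bound on $(G_N(\cdot)P_N-G(\cdot))y$ with a rate, and the remedy is to recognise it as the random error $\hat X_N^0-\hat X^0$ and to upgrade the $L^p(\Omega)$-rate, valid for all $p$, to an almost sure rate along $(\lambda_{N+1})_N$ via Borel--Cantelli --- which is exactly why $\cM$ must be chosen in terms of the analysis rather than as an arbitrary full-measure set.
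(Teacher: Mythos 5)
Your proposal is correct and follows essentially the same route as the paper: the same decomposition of the error (you merely group the paper's five terms into three), the same use of Lemma~\ref{lem:abstract-error-lem-1} with the spectral projection estimate~\eqref{eq:projection-error} for the conditional-expectation part, the smoothing of $S(T)x$ via~\eqref{eq:lem:abstract-error-lem-1:inverse-bound-1} for the $x$-dependent bridge term (the paper splits off $(I-P_N)\cJ^1$ there, while you dominate the operator norm by the $\gamma$-norm of $\cK-\cK_{N,H}$, which is equally valid), and the same Chebyshev/Borel--Cantelli upgrade of the all-$p$ moment bound to an almost sure rate defining $\cM$. The only (minor) point to add is that $\cM$ should also be intersected with the sets $\cM_{N,X^0(T)+Z}$ of Remark~\ref{rem:proj-cyl-obs-well-def}, so that $P_N y$ is well defined for every $N$ and the decomposition itself makes sense for the fixed $y$.
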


\begin{proof}
	As in Lemma~\ref{lem:abstract-error-lem-1}, the law of $X_N^{x,y} - X^{x,y}$ can be seen to be Gaussian in $H_T$ and therefore also in $\cC_T$. Using~\eqref{eq:prop:bridge-existence:existence-2} and the fact that $X^{0}_N(T) = P_N X^{0}(T)$, we split the error into four parts,
	\begin{equation*}
		\norm[L^p(\Omega,\cC_T)]{X_N^{x,y} - X^{x,y}} =: \mathrm{I} + \mathrm{II} + \mathrm{III} + \mathrm{IV} + \mathrm{V}.
	\end{equation*}
	The first term is given by $\mathrm{I} = \norm[\cC_T]{S(\cdot)x-S_N(\cdot) x}$, the second by $\mathrm{II} = \lrnorm[L^p(\Omega,\cC_T)]{X^0 - P_N X^0}$. With $\cK$ given by~\eqref{eq:kappa} and $\cK_N: V_N \to C([0,T],V_N)$ by
	\begin{equation*}
		(\cK_N v)(t) := Q_N(t) S_N(T-t) (Q_N(T) + \tilde Q_N)^{-\frac{1}{2}} v,
	\end{equation*}
	for $v \in V_N, t \in [0,T]$,
	the third therm is given by 
	\begin{equation*}
		\mathrm{III} = \lrnorm[\cC_T]{\cK ((Q(T)+\tilde Q)A^\eta)^{-\frac{1}{2}} y - \cK_N (Q_N(T)+\tilde Q_N)^{-\frac{1}{2}} P_N y},
	\end{equation*}
	the fourth by
	\begin{equation*}
		\mathrm{IV} = \lrnorm[\cC_T]{\cK ((Q(T)+\tilde Q)A^\eta)^{-\frac{1}{2}} S(T)x - \cK_N (Q_N(T)+\tilde Q_N)^{-\frac{1}{2}} P_N S(T)x },
	\end{equation*}
	and the fifth by
	\begin{align*}
		\mathrm{V} &= \lrnorm[L^p(\Omega,\cC_T)]{\E[X^{0} | X^0(T) + Z] - \E[P_N X^{0} | X_N^0(T) + P_N Z]} \\ 
		&= \lrnorm[L^p(\Omega,\cC_T)]{\E[X^{0} | X^0(T) + Z] - \E[P_N X^{0} | P_N (X^0(T) + Z)]}\\ 
		&= \lrnorm[L^p(\Omega,\cC_T)]{\cK ((Q(T)+\tilde Q)A^\eta)^{-\frac{1}{2}} (X^0(T) + Z) - \cK_N (Q_N(T)+\tilde Q_N)^{-\frac{1}{2}} P_N(X^0(T) + Z)}. 
	\end{align*}
	
	By~\eqref{eq:projection-error}, the fact that $x \in \dot{H}^\chi$ and the strong continuity of $S$, the first term is bounded by 
	\begin{equation*}
		\mathrm{I} = \sup_{t \in [0,T]} \norm{(I-P_N)A^{-\frac{\chi}{2}}S(t)A^{\frac{\chi}{2}}x} \le \norm{(I-P_N)A^{-\frac{\chi}{2}}} \sup_{t \in [0,T]} \norm{S(t)A^{\frac{\chi}{2}}x} \lesssim \lambda_{N+1}^{-\frac{\chi}{2}}. 
	\end{equation*}
	
	For the second term, we use Proposition~\ref{app:prop:cont-error}, \eqref{eq:schatten-bound-1}, Assumption~\ref{ass:Q}, \eqref{eq:semigroup-bound} and~\eqref{eq:projection-error} to see that, for arbitrary $\epsilon \in (0,\beta-r)$, 
	\begin{align*}
		\mathrm{II}^2 = \norm[L^p(\Omega,\cC_T)]{X^0-X_N^0}^2 &\lesssim \int_{0}^{T} t^{-\epsilon} \norm[\cL_2(Q^{\frac{1}{2}}(H),H)]{(I-P_N)S(t)}^2 \dd t \\
		&\quad+ \sup_{t \in [0,T]} \norm[\cL(\dot{H}^r,H)]{(I-P_N)S(t)}^2 \\
		&\lesssim \lambda_{N+1}^{-r} \int^T_0 t^{\beta-r-\epsilon-1} \dd t + \lambda_{N+1}^{-r} \lesssim \lambda_{N+1}^{-r}.
	\end{align*}
	
	It similarly follows by Lemma~\ref{lem:abstract-error-lem-1} and~\eqref{eq:projection-error} that $
	\mathrm{V} \lesssim \lambda_{N+1}^{-{\min(\rho-\alpha,r)}/{2}}$ for arbitrary $r < \beta$.
	
	Writing $\cE_N := \cK ((Q(T)+\tilde Q)A^\eta)^{-\frac{1}{2}} (X^0(T) + Z) - \cK_N (Q_N(T)+\tilde Q_N)^{-\frac{1}{2}} P_N(X^0(T) + Z)$, we combine Chebyshev's inequality with the bound on $\mathrm{V}$ in a Borel--Cantelli argument. We obtain for all $\epsilon > 0$ and $p > 1$ the existence of a constant $C>0$ such that for all $N \in \N$
	\begin{equation*}
		\IP(\norm[\cC_T]{\cE_N} \ge \lambda_{N+1}^{(-\min(\rho-\alpha,r)+\epsilon)/2}) \le \lambda_{N+1}^{(\min(\rho-\alpha,r)-\epsilon)p/2} \norm[L^p(\Omega,\cC_T)]{\cE_N}^p  \le C \lambda_{N+1}^{-\epsilon p}.
	\end{equation*}
	Since~$I_{\dot{H}^{\zeta} \hookrightarrow H} \in \cL_2(\dot{H}^{\zeta},H)$, $
	\sum_{j = 1}^\infty \lambda_j^{-\zeta} < \infty$.
	Hence, if we choose $ p \ge \zeta/\epsilon $, the corresponding series in the Borel--Cantelli lemma is convergent. Then, $\IP$-a.s., there is some $M \in \N$ such that $
	\norm[\cC_T]{\cE_N}< \lambda_{N+1}^{(-\min(\rho-\alpha,r)+\epsilon)/2}$
	for all $N \ge M$. Let now $\cM_{N,X^0(T)+Z} \subset \dot{H}^\eta$, with $\mu_T(\cM_{N,X^0(T)+Z}) = 1$, be the Borel subspace on which $P_N$ is well-defined (see Remark~\ref{rem:proj-cyl-obs-well-def}). Then, by the same arguments, there is a Borel subset (not necessarily a subspace) $\cM_1 \subset \dot{H}^{\eta}$ with $\mu_T(\cM_1) = 1$ such that for each $y \in \cM_1 \cap_{N\in\N} \cM_{N,X^0(T)+Z}$, there is some $M \in \N$ such that
	\begin{equation*}
		\lrnorm[\cC_T]{\cK ((Q(T)+\tilde Q)A^\eta)^{-\frac{1}{2}} y - \cK_N (Q_N(T)+\tilde Q_N)^{-\frac{1}{2}} P_N y} < \lambda_{N+1}^{(-\min(\rho-\alpha,r)+\epsilon)/2}
	\end{equation*}
	for all $N \ge M$. Let $\cM_0$ be the Borel subspace of Proposition~\ref{prop:bridge-existence}. By letting $\cM := \cM_0 \cap \cM_1 \cap_{N\in\N} \cM_{N,X^0(T)+Z}$, we find that $\mathrm{III} < \lambda_{N+1}^{(-\min(\rho-\alpha,r)+\epsilon)/2}$ for all $N \ge M$. 
	
	It remains to treat the fourth term. We first note that by~\eqref{eq:lem:abstract-error-lem-1:inverse-bound-1} and~\eqref{eq:semigroup-bound}, 
	\begin{equation*}
		\norm[\dot{H}^{\eta}]{((Q(T)+\tilde Q)A^\eta)^{-\frac{1}{2}} S(T) x} \lesssim \norm[\dot{H}^\alpha]{S(T)x} \lesssim T^{-\max((\alpha-\chi),0)/2}
	\end{equation*}
	Writing $\cJ^0_{N,H}$ for the operator $\cJ^0_{V_N,H}$ defined in~\eqref{eq:JVH-def}, we therefore obtain 
	\begin{align*}
		\mathrm{IV} &= \norm[\cC_T]{ \cJ_1 \cJ_0 - P_N \cJ_1 \cJ^0_{N,H} ((Q(T)+\tilde Q)A^\eta)^{-\frac{1}{2}} S(T) x } \\
		&\lesssim \norm[\cL(H_T,\cC_T)]{(I-P_N)\cJ_1} \norm[\cL(\dot{H}^{\eta},H_T)]{\cJ_0} + \norm[\cL(H_T,\cC_T)]{\cJ_1} \norm[\cL(\dot{H}^{\eta},H_T)]{\cJ_0-\cJ^0_{N,H}}.
	\end{align*}
	In a similar way to the first term, we obtain from Assumption~\ref{ass:rho} and H\"older's inequality that 
	$\norm[\cL(H_T,\cC_T)]{(I-P_N)\cJ_1} \lesssim \lambda_N^{-\frac{\rho}{2}} $. 
	The term $\norm[\cL(\dot{H}^{\eta},H_T)]{\cJ_0-\cJ^0_{N,H}}$ has already been treated in the proof of Lemma~\ref{lem:abstract-error-lem-1}, and with this, the proof is completed. 
\end{proof}

\begin{remark}
	\label{rem:spectral-unconditioned-rate}
	For the unconditioned case $y = Z = 0$, the proof can be repeated to see that under the same assumptions, the convergence rate becomes $r < \min(\beta,\xi)$. Hence, the convergence rate is the same if $\rho \ge \alpha + \min(\beta,\xi)$ and since $\rho \ge \beta$, this is in particular fulfilled for $\alpha = 0$.
\end{remark}

\begin{remark}
	%\label{rem:lower-bound}
	In the special case that not only $\tilde Q$ but also $Q$ share an eigenbasis with $A$, we can derive a lower bound on the error $\norm[L^p(\Omega,\cC_T)]{X_N^{x,y} - X^{x,y}}$
	of Theorem~\ref{thm:spectral-convergence} for $p \ge 2$. Writing $(\mu_j)^\infty_{j=1}$ for the eigenvalues of $Q$, we also need to assume that there is some $\nu > 1$ and $C > 0$ such that $\inf_{j} \lambda_j^{-1} \mu_j \ge C j^{-\nu}$. For simplicity, we let $\eta = -\zeta$ and set $x = y = 0$.
	
	Note that $Q(t) e_j = \mu_j (1 - e^{- 2 \lambda_j t})/ (2 \lambda_j) e_j$ and, since $e_j \in \dot{H}^\alpha \hookrightarrow \Cov(X^0(T) + Z)^{1/2}(\dot{H}^\eta)$, that $$K(t) e_j = Q(t)S(T-t) A^{\eta} ((Q(T) + \tilde Q) A^{\eta})^{-\frac{1}{2}} e_j =  \frac{\frac{\mu_j}{2 \lambda_j}  (1 - e^{- 2 \lambda_j t}) e^{- \lambda_j (T-t)} \lambda_j^{\eta/2}}{\sqrt{\frac{\mu_j}{2 \lambda_j}  (1 - e^{- 2 \lambda_j T}) + \tilde \mu_j}} e_j$$ for $t \in [0,T], j \in \N$. Since $(\lambda^{-\eta/2} e_j)_{j=1}^\infty$ is an orthonormal basis of $\dot{H}^\eta$, Lemma~\ref{lem:conditional-lemma} yields
	$$
	\hat{X}^0(t) = \sum_{j=1}^\infty \frac{\frac{\mu_j}{2 \lambda_j^{1 + \eta}}  (1 - e^{- 2 \lambda_j t}) e^{- \lambda_j (T-t)}}{\frac{\mu_j}{2 \lambda_j}  (1 - e^{- 2 \lambda_j T}) + \tilde \mu_j} \inpro[\dot{H}^\eta]{X(T) + Z}{e_j} e_j. 
	$$
	Writing $\hat{X}_N^0(t) := Q_{N}(t) S_N(T-t) (Q_{N}(T) + \tilde Q_N  )^{-1}(X_N^0(T) + P_N Z)$ we similarly obtain from Remark~\ref{rem:proj-cyl-obs-well-def},
	$$
	\hat{X}_N^0(t) = \sum_{j=1}^{N+1} \frac{\frac{\mu_j}{2 \lambda_j^{1 + \eta}}  (1 - e^{- 2 \lambda_j t}) e^{- \lambda_j (T-t)}}{\frac{\mu_j}{2 \lambda_j}  (1 - e^{- 2 \lambda_j T}) + \tilde \mu_j} \inpro[\dot{H}^\eta]{X(T) + Z}{e_j} e_j.
	$$
	It follows that $\hat{X}_N^0 = P_N \hat{X}^0$. By the definition of $\Cov(X^0(T) + Z)$ and $\Cov(X^0(t),X^0(T) + Z)$, $t \in [0,T]$, $$\E[\inpro[\dot{H}^\eta]{X^0(T) + Z}{e_j}^2] = \lambda_j^{2 \eta} \inpro{(Q(T) + \tilde Q) e_j}{e_j} = \frac{\mu_j}{2 \lambda_j^{1 - 2 \eta}}  (1 - e^{- 2 \lambda_j T}) + \lambda_j^{2 \eta} \tilde \mu_j $$ and $$\E[\inpro{X^0(t)}{e_j}\inpro[\dot{H}^\eta]{X^0(T) + Z}{e_j}] = \lambda_j^\eta \inpro{Q(t) S(T-t)e_j}{e_j} = \frac{\mu_j}{2 \lambda_j^{1 - \eta}}  (1 - e^{- 2 \lambda_j t})e^{- \lambda_j (T-t)}.$$
	Now, for arbitrary $t \in (0,T]$, $\norm[L^p(\Omega,\cC_T)]{X_N^{x,y} - X^{x,y}}^2 \ge \norm[L^2(\Omega,\cC_T)]{X_N^{x,y} - X^{x,y}}^2 \ge \norm[L^2(\Omega,H)]{X_N^{x,y}(t) - X^{x,y}(t)}^2$. Therefore, we get a lower bound on $\norm[L^p(\Omega,\cC_T)]{X_N^{0,0} - X^{0,0}}^2$ by first noting that by definition of $X_N^{0,0}$ and $X^{0,0}$,
	\begin{equation*}
		\norm[L^2(\Omega,H)]{X_N^{0,0}(t) - X^{0,0}(t)}^2 = \sum^\infty_{j=N+1} \E\left[\left( \inpro{X^0(t)}{e_j} - \inpro{\hat X^0(t)}{e_j} \right)^2\right]
	\end{equation*}
is given by
	\begin{align*}
		&\sum^\infty_{j=N+1} \frac{\mu_j (1 - e^{- 2 \lambda_j t})}{2 \lambda_j} - 2 \frac{\frac{\mu_j}{2 \lambda_j^{1 + \eta}}  (1 - e^{- 2 \lambda_j t}) e^{- \lambda_j (T-t)}}{\frac{\mu_j}{2 \lambda_j}  (1 - e^{- 2 \lambda_j T}) + \tilde \mu_j} \frac{\mu_j (1 - e^{- 2 \lambda_j t})e^{- \lambda_j (T-t)}}{2 \lambda_j^{1 - \eta}} \\
		&\hspace{3em}+ \left(\frac{\frac{\mu_j}{2 \lambda_j^{1 + \eta}}  (1 - e^{- 2 \lambda_j t}) e^{- \lambda_j (T-t)}}{\frac{\mu_j}{2 \lambda_j}  (1 - e^{- 2 \lambda_j T}) + \tilde \mu_j}\right)^2 \left(\frac{\mu_j}{2 \lambda_j^{1 - 2 \eta}}  (1 - e^{- 2 \lambda_j T}) + \lambda_j^{2 \eta} \tilde \mu_j \right) \\
		&\quad= \sum^\infty_{j=N+1} \frac{\mu_j (1 - e^{- 2 \lambda_j t})}{2 \lambda_j} \left(1 - \frac{e^{-2 \lambda_j (T-t)} - e^{-2 \lambda_j T}}{1 - e^{- 2 \lambda_j T} + \frac{2 \tilde \mu_j \lambda_j}{\mu_j}}\right).
	\end{align*}
	Writing $C_1$ for the constant of Remark~\ref{rem:tilde-mu-spectral}, we have $ \tilde \mu_j \lambda_j\ge C_1 \lambda_j^{1-\alpha}$ for all $j \in \N$. Moreover, as a consequence of Assumption~\ref{ass:rho}, the fact that $\alpha \le \rho$ and our assumption that $\mu_j > 0$ for all $j \in \N$, there is a constant $C_2 > 0$ such that $\lambda_j^{1 - \alpha}/\mu_j \ge (1 - e^{-2\lambda_j T})/C_2$ for all $j \in \N$. Putting these facts together, it follows that for a constant $C > 0$, $\frac{2 \tilde \mu_j \lambda_j}{\mu_j} \ge C (1 - e^{-2 \lambda_j T})$ for all $j \in \N$.
	Since the function $x \mapsto x/(1+x)$ is decreasing,
	$$
	1 - \frac{e^{-2 \lambda_j (T-t)} - e^{-2 \lambda_j T}}{1 - e^{- 2 \lambda_j T} + \frac{2 \tilde \mu_j \lambda_j}{\mu_j}} \ge 1 - \frac{1 - e^{-2 \lambda_j T}}{1 - e^{- 2 \lambda_j T} + \frac{2 \tilde \mu_j \lambda_j}{\mu_j}} = \frac{1}{\frac{1 - e^{- 2 \lambda_j T}}{\frac{2 \tilde \mu_j \lambda_j}{\mu_j}} + 1} \ge \frac{C}{C + 1}. 
	$$
	By our assumption on $\inf_{j} \lambda_j^{-1} \mu_j$, it follows that for some constant $C > 0$, 
	\begin{align*}
		\norm[L^p(\Omega,\cC_T)]{X_N^{x,y} - X^{x,y}}^2 \ge C \sum^\infty_{j=N+1} j^{-\nu} \gtrsim N^{1-\nu}.
	\end{align*}
	where we used the fact that $\lambda_j \to \infty$ as $j \to \infty$ in the first inequality and an integral inequality in the second, cf.\ the proof of \cite[Lemma~2.5]{BGJK20}. This implies that the bound on $\norm[L^p(\Omega,\cC_T)]{X_N^{x,y} - X^{x,y}}^2$ is essentially sharp under appropriate conditions on $\lambda_j$, $j \in \N$. For example, we may consider the setting of space-time white noise (see Example~\ref{ex:Q-reg}) with $\cD$ being an interval in $\R$. Then $\mu_j = 1$ and $\lambda_j$ is proportional to $j^2$ for all $j$, see, e.g., \cite[Chapter~6]{K14}. This means we may, for small $\epsilon > 0$, take $\nu = 2$ and $\beta = 1/2 - \epsilon$, so that $\norm[L^p(\Omega,\cC_T)]{X_N^{x,y} - X^{x,y}}$ is bounded from below by a constant times $\lambda_{N+1}^{-(\beta + \epsilon)/2}$, and from above (see Theorem~\ref{thm:spectral-convergence}) by a constant times $\lambda_{N+1}^{-(\beta-\epsilon)/2}$.
\end{remark}

\section{Finite element approximation under white observation noise}
\label{sec:approximation-fem}

In this section, we apply the results of Section~\ref{sec:approximation} to a finite element approximation of the SPDE bridge $X^{x,y}$, formulated in an abstract way. For a discretization parameter $h \in (0,1]$, let $(V_h)_{h \in (0,1]}$ be a net of finite-dimensional subspaces of $\dot{H}^1$ equipped with the inner product of $H$ and let $P_h \colon \dot{H}^{-1} \to V_h$ denote the associated generalized orthogonal projection. We write, for a cylindrical random variable $Z$ in $H$, $P_h Z$ for the $V_h$-valued random variable defined by Lemma~\ref{lem:fin-dim-proj-well-def}. Let us also write $S_h := S_V$ for a $V_h$-valued approximation of the semigroup $S$, $Q_h := Q_{V_h}$ and $\tilde Q_h := \tilde Q_{V_h}$. We write $X^{x}_h := X^{x}_{V_h}$ and $X^{x,y}_h := X^{x,y}_{V_h}$ for the discrete processes given by~\eqref{eq:abstract-discrete-spde-def} and \eqref{eq:abstract-discrete-bridge-def}. We make the following assumption on $P_h$ and $S_h$. 
\begin{assumption}
	\label{ass:fem}
	The following two statements hold true. 
	\begin{enumerate}[label=(\roman*)]
		\item \label{ass:fem:sg} For all $r \in [0,2]$ and $s \in [-r,\min(1,2-r)]$, there is a constant $C < \infty$ such that for all $h \in (0,1]$ and $t \in [0,T]$,
		\begin{equation*}
			\norm[\cL(\dot{H}^{-s},H)]{S(t)-S_h(t)P_h} = \norm[\cL(H)]{(S(t)-S_h(t)P_h)A^{\frac{s}{2}}} \le C h^r t^{-\frac{r+s}{2}}.
		\end{equation*}
		\item \label{ass:fem:proj} For all $s \in [0,2]$, there is a constant $C < \infty$ such that for all $h \in (0,1]$,
		\begin{equation*}
			%\label{eq:fem-projection}
			\norm[\cL(\dot{H}^s,H)]{I-P_h} = \norm[\cL(H)]{(I-P_h)A^{-\frac{s}{2}}} = \norm[\cL(H)]{A^{-\frac{s}{2}}(I-P_h)} \le C h^{s}.
		\end{equation*}		
	\end{enumerate}
\end{assumption}

\begin{example}
	%\label{ex:fem-example}
	In the setting of Example~\ref{ex:elliptic-operator}, with the additional assumption that $\cD$ is a convex polygon, we consider the same approximation $S_h$ of $S$ as in~\cite{FS91}. We let $(V_h)_{h \in (0,1]} \subset {H}^1$ be a standard family of finite element spaces consisting of piecewise linear polynomials with respect to a regular family of triangulations of $\cD$ with maximal mesh size $h$, vanishing on $\partial \cD$ in the Dirichlet case. We assume elliptic regularity, i.e., that $A^{-1} \in \cL(H,H^2)$. This is true when, e.g., the functions $a_{i,j}$, $i,j = 1, \ldots, d$ are of class $\cC^1(\bar{\cD})$ \cite[Theorems~2.6-2.7]{Y10}. Then, the so called Ritz projector $R_h \colon \dot{H}^1 \to V_h$, defined as the orthogonal projection of $\dot{H}^1$ onto $V_h$ with respect to the inner product of $\dot{H}^1$, satisfies the inequality $\norm[\cL(\dot{H}^r,H)]{I-R_h} \le C h^r$ for $r \in \{1,2\}$, where the constant $C < \infty $ does not depend on $h$ \cite[p. 799]{FS91}. This suffices for \cite[Lemma~3.8]{K14} to be satisfied. Then, an interpolation argument as in \cite[Lemma~5.1]{AKL16} (see, e.g., \cite[Theorem~A.4]{BZ00} for a justification) yields the expression in Assumption~\ref{ass:fem}\ref{ass:fem:sg}. Assumption~\ref{ass:fem}\ref{ass:fem:proj} follows, in this setting, from an estimate on an interpolant operator (see, e.g., \cite[p. 799]{FS91}) combined with an interpolation argument. 
\end{example}

We move on to derive a convergence result for the SPDE $X^{x,y}$. In this setting, we can not hope to find an operator $\tilde Q$ other than $\tilde Q = I$ such that Assumption~\ref{ass:abstract-approx}\ref{ass:abstract-approx:commutativity} is satisfied. We explicitly make this choice in the theorem below. 

\begin{theorem}
	%\label{thm:fem-convergence}
	Let Assumption~\ref{ass:Q}, Assumption~\ref{ass:Z} with $\tilde Q = I$ and Assumption~\ref{ass:fem} be satisfied. Let $\chi > 0$ and let $(h_n)_{n = 1}^\infty \subset (0,1]$ be a sequence fulfilling $h_n = \Op(n^{-\omega})$ for some $\omega > 0$. Then, for a Borel subset $\cM \subset \dot{H}^{-\zeta}$ with $\mu_T(\cM) = 1$ and all $x \in \dot{H}^\chi$, $y \in \cM$, $p \ge 1$  and $r < \min(\beta,\chi)$, there is an $M \in \N$ and a constant $C < \infty$ such that for all $N \ge M$,
	\begin{equation*}
		\norm[L^p(\Omega,\cC_T)]{X_{h_N}^{x,y}-X^{x,y}} \le C h_N^{-\min(r,2)}.
	\end{equation*}  
\end{theorem}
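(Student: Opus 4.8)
The plan is to mirror closely the proof of Theorem~\ref{thm:spectral-convergence}, systematically replacing the spectral projection bounds~\eqref{eq:projection-error} by the finite element estimates of Assumption~\ref{ass:fem}. First I would record that, since $\tilde Q = I$, Assumption~\ref{ass:Z}\ref{ass:Z:alpha} forces $\alpha = 0$, and that the hypotheses of Lemmas~\ref{lem:abstract-error-lem-1} and~\ref{lem:abstract-error-lem-2} are then all in force: Assumption~\ref{ass:rho} holds for every $\rho\in[0,\beta)$, and I fix $\rho$ with $\min(r,2)\le\rho\le2$ (possible since $\min(r,2)<\beta$); the commutativity in Assumption~\ref{ass:abstract-approx}\ref{ass:abstract-approx:commutativity} is trivial; and the uniform bounds in Assumptions~\ref{ass:abstract-approx} and~\ref{ass:abstract-approx-2} follow from Assumption~\ref{ass:fem}, Assumption~\ref{ass:Q}, the contractivity of $S$ and $\norm[\cL(H)]{P_h}=1$. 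As in Lemma~\ref{lem:abstract-error-lem-1}, for fixed $x$ and $y$ the random field $X_{h_N}^{x,y}-X^{x,y}$ is Gaussian in $H_T$, hence in $\cC_T$, so by~\cite[Proposition~3.14]{H09} it suffices to estimate the $L^2(\Omega,\cC_T)$-norm of each of the five terms below.

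Substituting $X_{h_N}^x(T) = S_{h_N}(T)P_{h_N}x + X_{h_N}^0(T)$ into~\eqref{eq:abstract-discrete-bridge-def} and using Theorem~\ref{thm:conditional-gaussian} (together with $\Cov(X_{h_N}^0(t),X_{h_N}^0(T)+P_{h_N}Z)=Q_{h_N}(t)S_{h_N}(T-t)$) to identify the subtracted term with $\E[X_{h_N}^0(t) | X_{h_N}^0(T)+P_{h_N}Z]$, I obtain the discrete analogue of~\eqref{eq:prop:bridge-existence:existence-2}. Writing $\cK_{h_N}v(t):=Q_{h_N}(t)S_{h_N}(T-t)(Q_{h_N}(T)+\tilde Q_{h_N})^{-1/2}v$ and comparing with~\eqref{eq:prop:bridge-existence:existence-2}, I bound $\norm[L^p(\Omega,\cC_T)]{X_{h_N}^{x,y}-X^{x,y}}$ by the sum $\mathrm{I}+\mathrm{II}+\mathrm{III}+\mathrm{IV}+\mathrm{V}$ of the terms defined as in the proof of Theorem~\ref{thm:spectral-convergence}, namely
\begin{align*}
\mathrm{I}&=\norm[\cC_T]{S(\cdot)x-S_{h_N}(\cdot)P_{h_N}x},\qquad \mathrm{II}=\norm[L^p(\Omega,\cC_T)]{X^0-X_{h_N}^0},\\
\mathrm{III}&=\norm[\cC_T]{\cK((Q(T)+\tilde Q)A^{\eta})^{-\frac{1}{2}}y-\cK_{h_N}(Q_{h_N}(T)+\tilde Q_{h_N})^{-\frac{1}{2}}P_{h_N}y},\\
\mathrm{IV}&=\norm[\cC_T]{\cK((Q(T)+\tilde Q)A^{\eta})^{-\frac{1}{2}}S(T)x-\cK_{h_N}(Q_{h_N}(T)+\tilde Q_{h_N})^{-\frac{1}{2}}S_{h_N}(T)P_{h_N}x},\\
\mathrm{V}&=\norm[L^p(\Omega,\cC_T)]{\E[X^0 | X^0(T)+Z]-\E[X_{h_N}^0 | X_{h_N}^0(T)+P_{h_N}Z]}.
\end{align*}
For $\mathrm{I}$, I would use $\mathrm{I}\le\sup_{t\in[0,T]}\norm[\cL(H)]{(S(t)-S_{h_N}(t)P_{h_N})A^{-\min(r,2)/2}}\,\norm[\dot{H}^{\min(r,2)}]{x}$ and invoke Assumption~\ref{ass:fem}\ref{ass:fem:sg} with order parameter $\min(r,2)$ and $s=-\min(r,2)$, so that the time factor is constant, together with $\dot{H}^\chi\hookrightarrow\dot{H}^{\min(r,2)}$ (valid since $r<\chi$); this gives $\mathrm{I}\lesssim h_N^{\min(r,2)}$. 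The term $\mathrm{II}$ is the standard finite element spatial error for the SPDE, estimated via Proposition~\ref{app:prop:cont-error}, Assumption~\ref{ass:Q} and Assumption~\ref{ass:fem}, and yields $\mathrm{II}\lesssim h_N^{\min(r,2)}$ upon choosing the smoothing parameters suitably. For $\mathrm{V}$, in contrast to the spectral case we cannot identify conditioning on $X_{h_N}^0(T)+P_{h_N}Z$ with conditioning on $P_{h_N}(X^0(T)+Z)$, so I insert $\E[X_{h_N}^0 | P_{h_N}(X^0(T)+Z)]$ and apply Lemma~\ref{lem:abstract-error-lem-1} to the first difference and Lemma~\ref{lem:abstract-error-lem-2} to the second; here $\alpha=0$ turns $\norm[\cL(\dot{H}^{-\alpha},\dot{H}^{-\rho})]{I-P_{h_N}}$ into $\norm[\cL(\dot{H}^{\rho},H)]{I-P_{h_N}}\le Ch_N^{\rho}$ by Assumption~\ref{ass:fem}\ref{ass:fem:proj}, while the remaining norms are controlled through Assumption~\ref{ass:fem}\ref{ass:fem:sg} and Assumption~\ref{ass:Q}, giving $\mathrm{V}\lesssim h_N^{\min(r,2)}$.

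It remains to treat the deterministic, $y$-dependent terms $\mathrm{III}$ and $\mathrm{IV}$ by a Borel--Cantelli argument, as in Theorem~\ref{thm:spectral-convergence}. Set $\Lambda_N:=\cK((Q(T)+\tilde Q)A^{\eta})^{-1/2}(\cdot)-\cK_{h_N}(Q_{h_N}(T)+\tilde Q_{h_N})^{-1/2}P_{h_N}(\cdot)$, which is well-defined and linear on $\cM_0\cap\cM_{h_N,X^0(T)+Z}$, where $\cM_0\subset\dot{H}^{\eta}$ is the subspace from Proposition~\ref{prop:bridge-existence} and $\cM_{h_N,X^0(T)+Z}$ is as in Remark~\ref{rem:proj-cyl-obs-well-def}. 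Using the identity $\Lambda_N(X^0(T)+Z)=\bigl(\E[X^0 | X^0(T)+Z]-\E[X_{h_N}^0 | X_{h_N}^0(T)+P_{h_N}Z]\bigr)-\cK_{h_N}(Q_{h_N}(T)+\tilde Q_{h_N})^{-\frac{1}{2}}\bigl(P_{h_N}X^0(T)-X_{h_N}^0(T)\bigr)$, the bound on $\mathrm{V}$, the estimate $\norm[L^p(\Omega,H)]{P_{h_N}X^0(T)-X_{h_N}^0(T)}\le\norm[L^p(\Omega,\cC_T)]{X^0-X_{h_N}^0}\lesssim h_N^{\min(r,2)}$ (which holds because $P_{h_N}X^0(T)-X_{h_N}^0(T)=P_{h_N}(X^0(T)-X_{h_N}^0(T))$), and the uniform bound $\sup_N\norm[\cL(H,\cC_T)]{\cK_{h_N}(Q_{h_N}(T)+\tilde Q_{h_N})^{-1/2}P_{h_N}}<\infty$ (which follows from the uniform bounds on $\cJ^1_V$ and $\tilde\cJ^0_V$ obtained inside the proofs of Lemmas~\ref{lem:abstract-error-lem-1} and~\ref{lem:abstract-error-lem-2}), the Gaussian $\cC_T$-valued random variable $\Lambda_N(X^0(T)+Z)$ satisfies $\norm[L^p(\Omega,\cC_T)]{\Lambda_N(X^0(T)+Z)}\lesssim h_N^{\min(r,2)}$ for all $p\ge1$. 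Since $X^0(T)+Z$ has law $\mu_T$, Chebyshev's inequality yields $\mu_T(\{y:\norm[\cC_T]{\Lambda_N y}\ge h_N^{\min(r,2)-\epsilon}\})\lesssim h_N^{\epsilon p}$, and since $h_n=\Op(n^{-\omega})$ the choice $p>1/(\omega\epsilon)$ makes $\sum_N h_N^{\epsilon p}$ convergent; Borel--Cantelli then produces a Borel set $\cM_1$ with $\mu_T(\cM_1)=1$ on which $\mathrm{III}=\norm[\cC_T]{\Lambda_N y}<h_N^{\min(r,2)-\epsilon}$ for all sufficiently large $N$. Setting $\cM:=\cM_0\cap\cM_1\cap\bigcap_{N}\cM_{h_N,X^0(T)+Z}$ then handles $\mathrm{III}$. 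Finally, $\mathrm{IV}$ is estimated exactly as the corresponding term in the proof of Theorem~\ref{thm:spectral-convergence}, using the factorization $\cK=\cJ^1\cJ^0$ and its discrete counterpart, the bound $\norm[\dot{H}^{\eta}]{((Q(T)+\tilde Q)A^{\eta})^{-1/2}S(T)x}\lesssim\norm{S(T)x}\le\norm{x}$ from~\eqref{eq:lem:abstract-error-lem-1:inverse-bound-1} (valid since $\alpha=0$), the discrepancy bound $\norm{S(T)x-S_{h_N}(T)P_{h_N}x}\lesssim h_N^{\min(\chi,2)}$ from Assumption~\ref{ass:fem}\ref{ass:fem:sg}, and Assumption~\ref{ass:fem} in place of~\eqref{eq:projection-error}; this gives $\mathrm{IV}\lesssim h_N^{\min(r,2)}$. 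Combining the five estimates completes the proof.

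The hard part will be the bookkeeping forced by the fact that $S_{h_N}(t)P_{h_N}\neq P_{h_N}S(t)$ for the finite element semigroup: in the discrete bridge one conditions on $X_{h_N}^0(T)+P_{h_N}Z$ rather than on $P_{h_N}(X^0(T)+Z)$ (which is exactly why Lemma~\ref{lem:abstract-error-lem-2}, vacuous in the spectral setting, is needed here), and the drift enters through $S_{h_N}(T)P_{h_N}x$ rather than through $P_{h_N}S(T)x$. Showing carefully that the resulting discrepancies---$P_{h_N}X^0(T)-X_{h_N}^0(T)$ in $\mathrm{III}$ and $S(T)x-S_{h_N}(T)P_{h_N}x$ in $\mathrm{IV}$---are absorbed into the standard finite element rate, and that $\cK_{h_N}(Q_{h_N}(T)+\tilde Q_{h_N})^{-1/2}P_{h_N}$ is uniformly bounded from $H$ into $\cC_T$, is the step requiring the most care.
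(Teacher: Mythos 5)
Your proposal follows essentially the same route as the paper's proof: you verify Assumptions~\ref{ass:abstract-approx} and~\ref{ass:abstract-approx-2} from Assumption~\ref{ass:fem} (with $\alpha=0$, $\eta=-\zeta$), make the same five-term split as in Theorem~\ref{thm:spectral-convergence}, bound term $\mathrm{V}$ by inserting $\E[X_h^0\,|\,P_h(X^0(T)+Z)]$ and applying Lemmas~\ref{lem:abstract-error-lem-1} and~\ref{lem:abstract-error-lem-2}, and treat the $y$-dependent term by the same Chebyshev/Borel--Cantelli argument (now correctly exploiting $h_n=\Op(n^{-\omega})$). The only, harmless, deviation is inside the Borel--Cantelli step for $\mathrm{III}$: the paper splits into $\mathrm{III}_a+\mathrm{III}_b$ and reuses the estimates from Lemma~\ref{lem:abstract-error-lem-2}, whereas you use the already established bound on $\mathrm{V}$ together with the uniform bound on $\cK_{h}(Q_{h}(T)+\tilde Q_{h})^{-1/2}P_{h}=\cJ^1_{V_h}\tilde\cJ^0_{V_h}$ and the pathwise error $P_h X^0(T)-X_h^0(T)$, which yields the same rate.
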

\begin{proof}
	We first note that since $\tilde Q = I$, we have $\eta = -\zeta$ and $\alpha = 0$ in Assumption~\ref{ass:Z}. Then Assumption~\ref{ass:abstract-approx-2}\ref{ass:abstract-approx-2:projection} is trivially fulfilled. Moreover, Assumption~\ref{ass:rho} is satisfied with $\rho < \beta$. Using Assumption~\ref{ass:fem}\ref{ass:fem:sg} with $r=0$ along with~\eqref{eq:semigroup-bound}, we see that for all $s \in [0,1]$, there is a constant $C < \infty$ such that for all $t > 0$ and $h \in (0,1]$,
	\begin{equation*}
		%\label{eq:fem-semigroup-bound}
		\norm[\cL(\dot{H}^{-s},H)]{S_h(t)P_h} = \norm[\cL(H)]{S_h(t)P_h A^{\frac{s}{2}} } \le C t^{-s/2}.
	\end{equation*}
	This shows that Assumption~\ref{ass:abstract-approx}\ref{ass:abstract-approx:findim-bound-1} is satisfied.
	Using this result, it follows that for any $\epsilon \in [0,\min(\beta,1))$, 
	\begin{equation*}
		\sup_{h \in (0,1]} \int^T_0 t^{-\epsilon} \norm[\cL_2(Q^{1/2}(H),H)]{S_h(t)P_h}^2 \dd t < \infty.
	\end{equation*}
	From this, we obtain that Assumptions~\ref{ass:abstract-approx}\ref{ass:abstract-approx:findim-bound-2} and~\ref{ass:abstract-approx-2}\ref{ass:abstract-approx-2:Q-alpha} are satisfied, too. 
	
	Using the results above, we apply Lemmas~\ref{lem:abstract-error-lem-1} and~\ref{lem:abstract-error-lem-2} to see that, for $r < \beta$ and $p \ge 1$, $\norm[L^p(\Omega,\cC_T)]{\E[X^{0} | X^0(T) + Z] - \E[X_h^{0} | X_h^0(T) + P_h Z]}^2$ is bounded by a constant times
	\begin{equation}
		\label{eq:thm:fem-convergence:pf-1}
		 \int_{0}^{T} t^{-\epsilon} \norm[\cL_2(Q^{\frac{1}{2}}(H),H)]{S(t)-S_h(t)P_h}^2 \dd t + \sup_{t \in [0,T]} \norm[\cL(\dot{H}^r,H)]{S(t)-S_h(t)P_h}^2+ \norm[\cL(\dot{H}^{r},H)]{I-P_h}^2,
	\end{equation}
	where we have also made use of the fact that the $\cL(Q^{1/2}(H),H)$-norm is bounded by the $\cL_2(Q^{1/2}(H),H)$-norm for the error term of Lemma~\ref{lem:abstract-error-lem-2}. Using Assumptions~\ref{ass:Q} and~\ref{ass:fem}\ref{ass:fem:sg}, it now follows that, for $r < \beta$ and $\epsilon \in (0,\min(\beta-r,1))$,
	\begin{align*}
		\int_{0}^{T} t^{-\epsilon} \norm[\cL_2(Q^{\frac{1}{2}}(H),H)]{S(t)-S_h(t)P_h}^2 \dd t &\le \hspace{-3pt} \int_{0}^{T} t^{-\epsilon} \norm[\cL(\dot{H}^{\beta-1},H)]{S(t)-S_h(t)P_h}^2 \norm[\cL_2(H)]{A^{\frac{\beta-1}{2}} Q^{\frac{1}{2}}}^2 \dd t \\
		&\lesssim h^{2\min(r,2)} \int_{0}^{T} t^{(\beta - r) - 1 - \epsilon} \dd t.
	\end{align*}
	The two other terms can similarly, using Assumption~\ref{ass:fem}, be bounded by a constant times $h^{2\min(r,2)}$.
	
	The proof of the claim is now similar to the proof of Theorem~\ref{thm:spectral-convergence}, and we make the same split of the error. The treatment of the terms $\mathrm{I}$ and $\mathrm{II}$ are entirely analogous, while~\eqref{eq:thm:fem-convergence:pf-1} is used for the term $\mathrm{V}$. In order to treat the term
	\begin{equation*}
		\mathrm{III} = \norm[\cC_T]{\cK (Q(T)+ \tilde Q)^{-\frac{1}{2}} y - \cK_h (Q_{h}(T)+ {\tilde Q}_{h})^{-\frac{1}{2}} P_{h} y }, 
	\end{equation*}
	where $\cK_h \colon V_h \to C([0,T],V_h)$ is given by
	\begin{equation*}
		(\cK_h v)(t) := Q_{h}(t) S_h(T-t) (Q_{h}(T) + {\tilde Q}_{h})^{-\frac{1}{2}} v,
	\end{equation*}
	for $v \in V_h, t \in [0,T]$,
	we can, as before, use a Borel--Cantelli argument applied to 
	\begin{equation*}
		\norm[\cC_T]{\cK (Q(T)+ \tilde Q)^{-\frac{1}{2}} (X^0(T)+Z) - \cK_h (Q_{h}(T)+ {\tilde Q}_{h})^{-\frac{1}{2}} P_{h} (X^0(T)+Z) }.
	\end{equation*}
	This corresponding $L^p(\Omega,\cC_T)$-bound is split into $\mathrm{III}_a + \mathrm{III}_b$, where
	\begin{equation*}
		\mathrm{III}_a = \norm[L^p(\Omega,\cC_T)]{\E[X^{0} | X^0(T) + Z] - \E[X_h^{0} | P_h (X^0(T) + Z)]}
	\end{equation*} 
	can be treated by Lemma~\ref{lem:abstract-error-lem-1}. The second term is given by 
	\begin{align*}
		\mathrm{III}_b = \|&Q_{V_h,H}(\cdot) S(T-\cdot) (P_{h}Q(T)P_{h}+ {\tilde Q}_{h})^{-1} P_{h} (X(T)+Z) \\
		&\quad- Q_{h}(\cdot) S(T-\cdot) (Q_{h}(T)+ {\tilde Q}_{h})^{-1} P_{h} (X(T)+Z)\|_{L^p(\Omega,\cC_T)}.
	\end{align*} 
	In the notation of the proof of Lemma~\ref{lem:abstract-error-lem-2}, this can be written as 
	\begin{align*}
		\mathrm{III}_b = \norm[L^p(\Omega,\cC_T)]{\cJ^1_{V_h} (\cJ^0_{V_h} - \tilde \cJ^0_{V_h})  P_{h} (X(T)+Z)}, 
	\end{align*} 
	and a bound on this can be found similarly to the proof of this lemma. In the same way as in the proof of Theorem~\ref{thm:spectral-convergence}, we then, for each $\epsilon >0$, find a Borel subset $\cM$ with $\mu_T(\cM) = 1$, such that for each $y \in \cM$, there is some $M \in \N$ such that 
	\begin{equation*}
		\norm[\cC_T]{\cK (Q(T)+ \tilde Q)^{-\frac{1}{2}} y - \cK_{h_N} (Q_{h_N}(T)+ {\tilde Q}_{h_N})^{-\frac{1}{2}} P_{h_N} y } < h_N^{-\min(r,2)+\epsilon}
	\end{equation*}
	for all $N \ge M$. The argument for the final term $\mathrm{IV}$ is similar to that of Theorem~\ref{thm:spectral-convergence}, using some of the estimates in the proof of Lemma~\ref{lem:abstract-error-lem-2}. We omit the details.
\end{proof}

\begin{remark}
	\label{rem:fem-unconditioned-rate}
	As for the spectral approximation, the rate in the unconditioned case $y = Z = 0$ can again be seen to be $r < \min(\beta,\xi)$, which coincides with the white observation noise case. 
\end{remark}

\begin{remark}
	Clearly, the same conclusion holds with $\tilde Q = \epsilon I$ for arbitrary $\epsilon > 0$. However, then the constant appearing in the error bound will depend on $\epsilon$.
\end{remark}

	\appendix
	\section{}
\label{app:sec:regularity}

In this appendix, we derive some technical results not directly related to the approximation of SPDE bridges. The first is a minor extension of \cite[Proposition~B.1]{DPZ14}. This is used throughout the main text to obtain bounds on operator inverses in certain negative norms. We recall that the psudoinverse $A^{-1}$ of an operator $A \in \cL(H,U)$ between two Hilbert spaces $H$ and $U$ is the linear (in general unbounded but closed) operator $A^{-1} := (A|_{\ker(A)^\perp})^{-1} \colon A(H) \to \ker(A)^\perp \subset H$. As a straightforward consequence of the definition, $A^{-1} A = P_{\ker(A)^\perp}$,
the orthogonal projection onto the orthogonal complement of $\ker(A)$.
In the case that $A$ is invertible, its pseudoinverse coincides with the usual inverse operator.

\begin{proposition}
	\label{app:inverse-bound}
	Let $H_1, H_2$ and $U$ be real Hilbert spaces and let $A_1 \in \cL(H_1,U)$, $A_2 \in \cL(H_2,U)$. Then $A_1(H_1) \subset A_2(H_2)$ if and only if there is a constant $C < \infty$ such that $\norm[H_1]{A_1^* u} \le C \norm[H_2]{A_2^* u}$ for all $u \in U$. If this holds, then $\norm[H_2]{A_2^{-1} u} \le C \norm[H_1]{A_1^{-1} u}$ for all $u \in A_1(H_1)$.
\end{proposition}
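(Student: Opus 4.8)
The statement to prove is Proposition~\ref{app:inverse-bound}, an extension of the classical factorization/range-inclusion lemma (Douglas' lemma) to operators between different Hilbert spaces, together with a quantitative bound on pseudoinverses.

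\textbf{Plan.} The result is stated as an ``if and only if'' plus a consequence, so I would split it into three pieces. First, the implication ``$A_1(H_1) \subset A_2(H_2) \implies \exists C$ such that $\norm[H_1]{A_1^* u} \le C \norm[H_2]{A_2^* u}$ for all $u \in U$''. Second, the converse. Third, the pseudoinverse bound. The natural route is to mimic the proof of \cite[Proposition~B.1]{DPZ14} but carry an extra operator through the bookkeeping, since the source and target Hilbert spaces of $A_1$ and $A_2$ now differ.

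\textbf{Forward direction.} Assume $A_1(H_1)\subset A_2(H_2)$. I would define $\Lambda \colon H_1 \to H_2$ by $\Lambda x := A_2^{-1} A_1 x$, where $A_2^{-1}$ is the (closed, densely defined) pseudoinverse of $A_2$; this is well-defined since $A_1 x \in A_2(H_2)$, and it satisfies $A_2 \Lambda = A_1$. The key step is to show $\Lambda$ is bounded via the closed graph theorem: if $x_n \to x$ in $H_1$ and $\Lambda x_n \to z$ in $H_2$, then $A_1 x_n \to A_1 x$ by continuity of $A_1$, while $A_1 x_n = A_2 \Lambda x_n \to A_2 z$ by continuity of $A_2$, so $A_1 x = A_2 z$; since also $\Lambda x_n \in \ker(A_2)^\perp$, which is closed, $z \in \ker(A_2)^\perp$, hence $z = A_2^{-1} A_1 x = \Lambda x$. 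Thus $\Lambda \in \cL(H_1,H_2)$ with $A_1 = A_2 \Lambda$. Taking adjoints, $A_1^* = \Lambda^* A_2^*$, so $\norm[H_1]{A_1^* u} \le \norm[\cL(H_2,H_1)]{\Lambda^*} \norm[H_2]{A_2^* u}$; set $C = \norm[\cL(H_2,H_1)]{\Lambda}$.

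\textbf{Converse direction and pseudoinverse bound.} Assume $\norm[H_1]{A_1^* u} \le C \norm[H_2]{A_2^* u}$ for all $u \in U$. I would define a linear map $T$ on the (dense) subspace $A_2^*(U) \subset \overline{A_2^*(U)} = \ker(A_2)^\perp$ by $T(A_2^* u) := A_1^* u$; this is well-defined because if $A_2^* u = A_2^* u'$ then $\norm[H_1]{A_1^* u - A_1^* u'} \le C \norm[H_2]{A_2^*(u-u')} = 0$, and it is bounded by $C$ on this subspace, hence extends to $T \in \cL(\ker(A_2)^\perp, H_1)$, and then to $H_2$ by composing with the orthogonal projection onto $\ker(A_2)^\perp$; call the extension $\Lambda^* := T P_{\ker(A_2)^\perp} \in \cL(H_2,H_1)$ with $\norm[\cL(H_2,H_1)]{\Lambda^*}\le C$. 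By construction $\Lambda^* A_2^* = A_1^*$, so $A_2 \Lambda = A_1$, giving $A_1(H_1) = A_2 \Lambda(H_1) \subset A_2(H_2)$, which proves the converse. For the final inequality, let $u \in A_1(H_1)$ and set $w := A_1^{-1} u \in \ker(A_1)^\perp$, so $A_1 w = u$; then $A_2 (\Lambda w) = A_1 w = u$, and $\norm[H_2]{A_2^{-1} u} \le \norm[H_2]{\Lambda w}$ since $A_2^{-1}u$ is by definition the minimal-norm solution of $A_2 v = u$ (the projection of $\Lambda w$ onto $\ker(A_2)^\perp$), hence $\norm[H_2]{A_2^{-1} u} \le \norm[H_2]{\Lambda w} \le \norm[\cL(H_1,H_2)]{\Lambda} \norm[H_1]{w} \le C \norm[H_1]{A_1^{-1} u}$.

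\textbf{Main obstacle.} The only subtlety, as in the classical statement, is correctly handling the non-injectivity of $A_1$ and $A_2$: one must work with pseudoinverses and the orthogonal complements of kernels throughout, being careful that $\Lambda$ maps into $\ker(A_2)^\perp$ (so that $A_2^{-1}A_1 = \Lambda$ genuinely holds, not just $A_2 \Lambda = A_1$) and that the closed graph argument uses closedness of $\ker(A_2)^\perp$. Everything else is a routine transcription of \cite[Proposition~B.1]{DPZ14} with the second Hilbert space $H_2$ in place of $H_1$ in the appropriate slots; the duality bookkeeping $A_1 = A_2\Lambda \iff A_1^* = \Lambda^* A_2^*$ is standard. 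I would also remark that when $A_2$ (resp.\ $A_1$) is invertible the pseudoinverse is the genuine inverse, so the stated inequality is exactly what is used in the main text.
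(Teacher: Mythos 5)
Your proof is correct, but it follows a genuinely different route from the paper. The paper does not reprove the equivalence at all: it simply cites \cite[Proposition~B.1(i)]{DPZ14} for the ``range inclusion iff adjoint-norm inequality'' part, and then proves only the pseudoinverse bound, by reducing to the case that $A_2$ is injective and arguing via the set inclusion $\{A_1 w : \norm[H_1]{w}\le 1\} \subset \{A_2 w : \norm[H_2]{w} \le C\}$ extracted from the interior of the cited proof (phrased, somewhat awkwardly, as a contradiction argument on $\norm[H_2]{v'} > C\norm[H_1]{v}$ for $u = A_1 v = A_2 v'$). You instead give a self-contained Douglas-type factorization: construct $\Lambda$ with $A_1 = A_2\Lambda$ (closed graph theorem in the forward direction; bounded extension of $A_2^* u \mapsto A_1^* u$ with $\norm[\cL(H_2,H_1)]{\Lambda^*}\le C$ in the converse), and then read off $\norm[H_2]{A_2^{-1}u} \le \norm[H_2]{\Lambda A_1^{-1}u} \le C \norm[H_1]{A_1^{-1}u}$ from the minimal-norm (equivalently, $\ker(A_2)^\perp$) characterization of the pseudoinverse. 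What your approach buys is independence from the internals of the cited proof, an explicit operator realizing the constant, and a cleaner handling of non-injectivity (no reduction step and no contradiction needed); what the paper's approach buys is brevity, since the equivalence is outsourced to the reference. One bookkeeping point worth making explicit in your write-up: in the final inequality you must use the $\Lambda$ built in the converse direction (whose norm is at most the $C$ from the hypothesis), not the one from the forward direction, so that the bound holds with the same constant $C$ as in the statement — your text does this, but it deserves a sentence.
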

\begin{proof}
	The first statement is \cite[Proposition~B.1 (i)]{DPZ14}. We assume that $A_2$ is injective as otherwise we can consider its restriction to the orthogonal complement of its kernel. For the second claim, let $A_1(H_1) \subset A_2(H_2)$ and pick $u \in A_1(H_1)$. Then there exist $v \in H_1$, $v' \in H_2$ such that $u = A_1 v = A_2 v'$. We must show that $\norm[H_2]{v'} \le C \norm[H_1]{v}$. Suppose by contradiction that $\norm[H_2]{v'} > C \norm[H_1]{v}$. We have
	\begin{equation*}
		\frac{u}{\norm[H_1]{v}} = \frac{A_1 v}{\norm[H_1]{v}} \in \left\{ A_1 w : \norm[H_1]{w} \le 1 \right\}.
	\end{equation*}
	From the proof of \cite[Proposition~B.1 (i)]{DPZ14}, we see that the set on the right hand side must be contained in 
	\begin{equation*}
		\left\{ A_2 w : \norm[H_2]{w} \le C \right\}.
	\end{equation*}
	But $\frac{u}{\norm[H_1]{v}} = A_2 \frac{v'}{\norm[H_1]{v}}$ with $\norm[H_2]{v'} > C \norm[H_1]{v}$ and $A_2$ is injective, so $\norm[H_2]{v'} \le C \norm[H_1]{v}$. 
\end{proof}

Next, we consider the same setting and notation as in Section~\ref{sec:spde-bridges}. A Burkholder--Davis--Gundy type inequality~\cite[Theorem~4.36]{DPZ14} provides a simple error bound for $X^0(t)-X_V^0(t)$. Namely, for all $p \ge 1$, $t \in [0,T]$, there is a constant $C< \infty$ such that
\begin{equation*}
	%\label{eq:app:bdg-error}
	\E\left[\norm{X^0(t)-X_V^0(t)}^p\right] \le C \left(\int^t_0 \norm[\cL_2(Q^{\frac{1}{2}}(H),H)]{S(s)-S_V(s)P_V}^2 \dd s\right)^{\frac{p}{2}}.
\end{equation*}
However, we need the following stronger result, which is obtained from a factorization argument similar to the proof of~\cite[Theorem~5.12]{DPZ14}. 

\begin{proposition}
	\label{app:prop:cont-error}
	Suppose that Assumptions~\ref{ass:Q} and \ref{ass:abstract-approx}\ref{ass:abstract-approx:findim-bound-1} are satisfied. Then, for all $r < \beta$, $\epsilon \in (0,\min(\beta,1))$ and $p \ge 1$, there is a constant $C< \infty$ such that for all $V \in (V_i)_{i \in \cI}$,
	\begin{align*}
		&\norm[L^p(\Omega,\cC_T)]{X^0-X_V^0} \\
		&\qquad\le C\Bigg( \int_{0}^{T} t^{-\epsilon} \norm[\cL_2(Q^{\frac{1}{2}}(H),H)]{S(t)-S_V(t)P_V}^2 \dd t + \sup_{t \in [0,T]} \norm[\cL(\dot{H}^r,H)]{S(t)-S_V(t)P_V}^2 \Bigg)^{\frac{1}{2}}.
	\end{align*}
\end{proposition}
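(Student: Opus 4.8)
The plan is to adapt the factorization argument in the proof of \cite[Theorem~5.12]{DPZ14}, applied separately to $X^0$ and to $X_V^0$. Fix $r<\beta$, $\epsilon\in(0,\min(\beta,1))$ and $p\ge1$; we may assume $r\ge 0$, since for $r<0$ the term $\mathrm{E}_2 := \sup_{t\in[0,T]}\norm[\cL(\dot{H}^r,H)]{S(t)-S_V(t)P_V}^2$ is infinite and there is nothing to prove. Write $\mathrm{E}_1 := \int_0^T t^{-\epsilon}\norm[\cL_2(Q^{1/2}(H),H)]{S(t)-S_V(t)P_V}^2\,\dd t$ and $G(t):=S(t)-S_V(t)P_V$, so that the claim reads $\norm[L^p(\Omega,\cC_T)]{X^0-X_V^0}\lesssim (\mathrm{E}_1+\mathrm{E}_2)^{1/2}$. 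Choose $\delta\in\bigl(0,\tfrac12\min(1,\beta-r,\epsilon)\bigr)$. From Assumption~\ref{ass:Q} and~\eqref{eq:semigroup-bound} we have $\norm[\cL_2(Q^{1/2}(H),H)]{A^{r/2}S(u)}\lesssim u^{-\max(r+1-\beta,0)/2}$ uniformly in $u\in(0,T]$, and the choice of $\delta$ ensures $2\delta+\max(r+1-\beta,0)<1$, so that $Y_\delta(\sigma):=\int_0^\sigma (\sigma-s)^{-\delta}S(\sigma-s)\,\dd W(s)$ defines a $\dot{H}^r$-valued Gaussian process with $\sup_{\sigma\in[0,T]}\E\bigl[\norm[\dot{H}^r]{Y_\delta(\sigma)}^q\bigr]\lesssim 1$ for every $q\ge 1$. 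Since $V$ is finite-dimensional, $Y_\delta^V(\sigma):=\int_0^\sigma (\sigma-s)^{-\delta}S_V(\sigma-s)P_V\,\dd W(s)$ is well-defined, and by It\^o's isometry together with $u^{-2\delta}\le T^{\epsilon-2\delta}u^{-\epsilon}$ for $u\in(0,T]$ (using $2\delta<\epsilon$), $\sup_{\sigma\in[0,T]}\E\bigl[\norm{Y_\delta(\sigma)-Y_\delta^V(\sigma)}^2\bigr]=\sup_{\sigma}\int_0^\sigma u^{-2\delta}\norm[\cL_2(Q^{1/2}(H),H)]{G(u)}^2\,\dd u\le T^{\epsilon-2\delta}\mathrm{E}_1$.

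The factorization identity, valid for $X^0$ by \cite[Theorems~5.10,~5.12]{DPZ14} and, by the same stochastic Fubini argument using $S_V(t-\sigma)P_VS_V(\sigma-s)P_V=S_V(t-s)P_V$, also for $X_V^0$, gives with $c_\delta:=\sin(\pi\delta)/\pi$ the representations $X^0(t)=c_\delta\int_0^t(t-\sigma)^{\delta-1}S(t-\sigma)Y_\delta(\sigma)\,\dd\sigma$ and $X_V^0(t)=c_\delta\int_0^t(t-\sigma)^{\delta-1}S_V(t-\sigma)P_VY_\delta^V(\sigma)\,\dd\sigma$. Adding and subtracting $S_V(t-\sigma)P_VY_\delta(\sigma)$ in the first integral yields $X^0(t)-X_V^0(t)=\mathrm{A}(t)+\mathrm{B}(t)$, where $\mathrm{A}(t)=c_\delta\int_0^t(t-\sigma)^{\delta-1}G(t-\sigma)Y_\delta(\sigma)\,\dd\sigma$ and $\mathrm{B}(t)=c_\delta\int_0^t(t-\sigma)^{\delta-1}S_V(t-\sigma)P_V\bigl(Y_\delta(\sigma)-Y_\delta^V(\sigma)\bigr)\,\dd\sigma$. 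Pick $q\in(1,1/(1-\delta))$ with conjugate $q'$; since $(\delta-1)q>-1$, H\"older's inequality in $\sigma$ bounds $\bigl(\int_0^t(t-\sigma)^{(\delta-1)q}\,\dd\sigma\bigr)^{1/q}$ by a constant uniformly in $t\in[0,T]$. For $\mathrm{A}$, using in addition $\norm{G(t-\sigma)Y_\delta(\sigma)}\le\norm[\cL(\dot{H}^r,H)]{G(t-\sigma)}\,\norm[\dot{H}^r]{Y_\delta(\sigma)}$, one gets $\sup_{t\in[0,T]}\norm{\mathrm{A}(t)}\lesssim \mathrm{E}_2^{1/2}\bigl(\int_0^T\norm[\dot{H}^r]{Y_\delta(\sigma)}^{q'}\,\dd\sigma\bigr)^{1/q'}$, whence $\norm[L^p(\Omega,\cC_T)]{\mathrm{A}}\lesssim \mathrm{E}_2^{1/2}$ by the moment bound on $Y_\delta$. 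For $\mathrm{B}$, using $\sup_{V,t}\norm[\cL(H)]{S_V(t-\sigma)P_V}<\infty$ from Assumption~\ref{ass:abstract-approx}\ref{ass:abstract-approx:findim-bound-1} and the same argument, $\sup_t\norm{\mathrm{B}(t)}\lesssim\bigl(\int_0^T\norm{Y_\delta(\sigma)-Y_\delta^V(\sigma)}^{q'}\,\dd\sigma\bigr)^{1/q'}$, hence $\norm[L^p(\Omega,\cC_T)]{\mathrm{B}}\lesssim\mathrm{E}_1^{1/2}$. Adding the two estimates gives $\norm[L^p(\Omega,\cC_T)]{X^0-X_V^0}\le\norm[L^p(\Omega,\cC_T)]{\mathrm{A}}+\norm[L^p(\Omega,\cC_T)]{\mathrm{B}}\lesssim(\mathrm{E}_1+\mathrm{E}_2)^{1/2}$, as claimed.

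The steps I would not spell out in detail are routine: the justification of the factorization identities via a stochastic Fubini theorem as in \cite[Theorem~5.10]{DPZ14} (trivial for $X_V^0$ because $V$ is finite-dimensional), the $\IP$-a.s.\ continuity in $t$ of $\mathrm{A}$ and $\mathrm{B}$ (which makes them $\cC_T$-valued Gaussian random variables, so that the passage from second to $p$-th moments is automatic, cf.\ \cite[Proposition~3.14]{H09}), and the use of Minkowski's integral inequality to move $\E$ inside the $\dd\sigma$-integrals. The genuinely delicate point, and the place where Assumption~\ref{ass:Q} is essential, is obtaining the moment bounds for $Y_\delta$ in $\dot{H}^r$ and for $Y_\delta-Y_\delta^V$ in $H$ \emph{uniformly in $\sigma\in[0,T]$ and in $V\in(V_i)_{i\in\cI}$}; this is what pins down the admissible range of $\delta$ and hence the interplay between $r$, $\beta$ and $\epsilon$, and is the main obstacle.
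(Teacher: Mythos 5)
Your proposal is correct and follows essentially the same route as the paper's own proof: the factorization formula of \cite[Theorem~5.10]{DPZ14}, the identical splitting of $X^0-X_V^0$ into a term driven by $(S-S_VP_V)$ acting on the auxiliary process and a term driven by $S_VP_V$ acting on the difference of auxiliary processes, H\"older's inequality in the time variable, It\^o isometry/Burkholder-type moment bounds, and Assumption~\ref{ass:abstract-approx}\ref{ass:abstract-approx:findim-bound-1}. The only deviation is technical rather than structural: you factorize with a separate small exponent $\delta<\tfrac12\min(1,\beta-r,\epsilon)$ and use $u^{-2\delta}\le T^{\epsilon-2\delta}u^{-\epsilon}$ to reach the weighted integral $\mathrm{E}_1$, whereas the paper factorizes with $\epsilon/2$ directly and takes $p>2/\epsilon$; your variant is sound and in fact covers the stated range of $\epsilon$ slightly more transparently.
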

\begin{proof}
	By~\eqref{eq:Q:cont} with $r=0$, we may use~\cite[Theorem~5.10]{DPZ14} to apply a factorization formula and obtain 
	\begin{equation*}
		X^0(t) = \frac{\sin(\epsilon \pi)}{\pi} \int_{0}^{t} (t-s)^{\epsilon/2-1} S(t-s) Y(s) \dd s
	\end{equation*}
	for $t \in [0,T]$, where 
	\begin{equation*}
		Y(t) := \int^t_0 (t-s)^{-\epsilon/2} S(t-s) \dd W(s)
	\end{equation*}
	for $t \in [0,T]$. An analogous expression, with $S$ replaced by $S_V$, is obtained for $X^0_V$ using the fact that $V$ is finite dimensional, with 
	\begin{equation*}
		Y_V(t) := \int^t_0 (t-s)^{-\epsilon/2} S_V(t-s) P_V \dd W(s)
	\end{equation*}
	for $t \in [0,T]$. Using these expressions, we make the split
	\begin{align*}
		X^0(t)-X_V^0(t)  &= \frac{\sin(\epsilon \pi)}{\pi} \int_{0}^{t} (t-s)^{\epsilon/2-1} S_V(t-s)P_V (Y(s)-Y_V(s)) \dd s \\
		&\quad+ \frac{\sin(\epsilon \pi)}{\pi} \int_{0}^{t} (t-s)^{\epsilon/2-1} (S(t-s) - S_V(t-s)P_V) Y(s) \dd s =: \mathrm{I}_t + \mathrm{II}_t.
	\end{align*}
	We start with the second term, and let $p > 2/\epsilon$ so that its H\"older conjugate $q$ fulfills $q(\epsilon/2-1) > -1$. Thus, by H\"older's inequality, 
	\begin{equation*}
		\norm{\mathrm{II}_t}^p \le \left( \int^t_0 \norm[\cL(\dot{H}^r,H)]{S(t-s)-S_V(t-s)P_V}^q (t-s)^{q(\epsilon/2-1)} \dd s \right)^{\frac{p}{q}} \int^t_0 \norm[\dot{H}^r]{Y(s)}^p \dd s
	\end{equation*}
	so that 
	\begin{equation*}
		\norm[L^p(\Omega,\cC_T)]{\mathrm{II}}^p \lesssim \sup_{t \in [0,T]} \norm[\cL(\dot{H}^r,H)]{S(t)-S_V(t)P_V}^p \int^T_0 \E[\norm[\dot{H}^r]{Y(t)}^p] \dd t.
	\end{equation*}
	By~\cite[Theorem~4.36]{DPZ14}, 
	\begin{equation*}
		\E[\norm[\dot{H}^r]{Y(t)}^p] \lesssim \int^t_0 (t-s)^{-\epsilon} \norm[\cL_2(Q^{\frac{1}{2}}(H),H)]{S(t-s)}^2 \dd s \le \int^T_0 s^{-\epsilon} \norm[\cL_2(Q^{\frac{1}{2}}(H),H)]{S(s)}^2 \dd s
	\end{equation*}
	for $t \in [0,T]$, which, in light of~\eqref{eq:Q:cont}, completes the bound on $\mathrm{II}$. Next, by a similar H\"older argument,
	\begin{equation*}
		\norm[L^p(\Omega,\cC_T)]{\mathrm{I}}^p \lesssim \sup_{\substack{V \in (V_i)_{i \in \cI} \\ t \in [0,T]}} \norm[\cL(H)]{S_V(t)P_V}^p \int^T_0 \E[\norm{Y(t)-Y_V(t)}^p] \dd t.
	\end{equation*}
	Another application of~\cite[Theorem~4.36]{DPZ14} yields that
	\begin{equation*}
		\E[\norm{Y(t)-Y_V(t)}^p] \lesssim \left(\int_{0}^{T} s^{-\epsilon} \norm[\cL_2(Q^{\frac{1}{2}}(H),H)]{S(s)-S_V(s)P_V}^2 \dd s \right)^{\frac{p}{2}},
	\end{equation*} 
	which, along with Assumption~\ref{ass:abstract-approx}\ref{ass:abstract-approx:findim-bound-1}, completes the proof. 
\end{proof}
	
	\bibliographystyle{hplain}
	\bibliography{bibliography}
	
\end{document}